\DeclareMathAlphabet{\mathpzc}{OT1}{pzc}{m}{it}
\numberwithin{equation}{section}
\begin{document}
 
\title{{\bf The Hasse invariant of the Tate normal form $E_7$ and the supersingular polynomial for the Fricke group $\Gamma_0^*(7)$}}         
\author{Patrick Morton}        
\date{June 3, 2023}          
\maketitle

\begin{abstract}  A formula is proved for the number of linear factors and irreducible cubic factors over $\mathbb{F}_l$ of the Hasse invariant $\hat H_{7,l}(a)$ of the elliptic curve $E_7(a)$ in Tate normal form, on which the point $(0,0)$ has order $7$, as a polynomial in the parameter $a$, in terms of the class number of the imaginary quadratic field $K=\mathbb{Q}(\sqrt{-l})$.  Conjectural formulas are stated for the numbers of quadratic and sextic factors of $\hat H_{7,l}(a)$ of certain specific forms in terms of the class number of $\mathbb{Q}(\sqrt{-7l})$, which are shown to imply a recent conjecture of Nakaya on the number of linear factors over $\mathbb{F}_l$ of the supersingular polynomial $ss_l^{(7*)}(X)$ corresponding to the Fricke group $\Gamma_0^*(7)$.
\end{abstract}

\section{Introduction}

This paper is a continuation of the discussion begun in \cite{mor1} and \cite{mor2}.  It concerns a conjecture of Nakaya on the supersingular polynomial $ss_p^{(N*)}(X)$ for the Fricke group $\Gamma_0^*(N)$ and its relationship to the Hasse invariant of the Tate normal form $E_7$ for a point of order $7$.  Recall that the Fricke group $\Gamma_0^*(N)$ is
$$\Gamma_0^*(N) = \Gamma_0(N) \cup \Gamma_0(N) W_N, \ \ W_N = \left(\begin{array}{cc}0 & \frac{-1}{\sqrt{N}} \\ \sqrt{N} & 0\end{array}\right),$$
where $W_N \in SL_2(\mathbb{R})$ is the matrix of the involution $\tau \rightarrow \frac{-1}{N\tau}$.  The supersingular polynomial $ss_p^{(N*)}(X)$ was defined originally for $N \in \{2, 3, 5, 7\}$ by Koike and Sakai \cite{sa1}, \cite{sa2} and for $N$ in the set of prime divisors $\mathfrak{S}$ of the monster group by Nakaya \cite{na}, as follows.  If $N$ is an element of the set
$$\mathfrak{S} = \{2, 3, 5, 7, 11, 13, 17, 19, 23, 29, 31, 41, 47, 59, 71\},$$
then there is a Hauptmodul $j_N^*(\tau)$ for the modular functions invariant under the Fricke group $\Gamma_0^*(N)$, and the modular functions $j(\tau), j(N\tau)$ satisfy a quadratic equation over the field $\mathbb{Q}(j_N^*(\tau))$, given by
\begin{align*}
& \ R_N(X,Y) := X^2 - a_N(Y) X +b_N(Y) \in \mathbb{Z}[X,Y];\\
& \ R_N(j(\tau),j_N^*(\tau)) = R_N(j(N\tau),j_N^*(\tau)) = 0.
\end{align*}
In the case $N = 7$ under consideration in this paper, we have
\begin{align*}
R_7(X,Y) &= X^2-XY(Y^2-21Y+8)(Y^4-42Y^3+454Y^2-1008Y-1280)\\
& \ \ +Y^2(Y^2+224Y+448)^3
\end{align*}
and
\begin{align*}
j_7^*(\tau) = & \ \left(\frac{\eta(\tau)}{\eta(7\tau)}\right)^4+13+49\left(\frac{\eta(7\tau)}{\eta(\tau)}\right)^4\\
= & \ q^{-1}+9 + 51q+204q^2+681q^3+1956q^4+5135q^5+ \cdots, \ \ q = e^{2\pi i \tau}.
\end{align*}
This function shows up in Ramanujan's notebooks.  See \cite[Entry 32(iii), (32.8), pp. 176-177]{ber} and \cite[(4.14)]{bz}.  The supersingular invariants $j_N^*$ for $\Gamma_0^*(N)$ are the values in $\overline{\mathbb{F}}_p$ for which there is a supersingular $j$-invariant $j \in \mathbb{F}_{p^2}$ satisfying
$$R_N(j,j_N^*) = j^2 - a_N(j_N^*) j +b_N(j_N^*) = 0 \ \textrm{in} \ \overline{\mathbb{F}}_p.$$
If $ss_p(X)$ is the ordinary supersingular polynomial, then the supersingular polynomial $ss_p^{(N*)}(X)$ for $\Gamma_0^*(N)$ is the product
$$ss_p^{(N*)}(X) = \prod_{R_N(j,j_N^*) = ss_p(j) = 0}{(X-j_N^*)} \ \ \textrm{in} \ \mathbb{F}_p[X]$$
over the {\it distinct} values $j_N^*$, as $j$ ranges over all supersingular $j$-invariants in $\mathbb{F}_{p^2}$.  This definition is similar to the definition of the polynomial $ss_p^{(N)}(X)$ for $\Gamma_0(N)$ and $N \in \{2,3,4\}$ given by Tsutsumi \cite{tsut}.  \medskip

A formula for $ss_p^{(N*)}(X)$ in terms of resultants, conjectured by Nakaya, is proved in \cite{mor3}.  In the case $N=7$ this formula is:
\begin{align*}
&(Y+1)^{\mu_7}(Y-27)^{\mu_7} \textrm{Res}_X(ss_p(X),R_7(X,Y)) \equiv \\
& (Y^2+224Y+448)^{2\delta}(Y^4-528Y^3-9024Y^2-5120Y-1728)^\varepsilon ss_p^{(7*)}(Y)^2
\end{align*}
modulo $p$, where
$$ \mu_7 = \frac{1}{2}\left(1-\left(\frac{-7}{p}\right)\right), \ \ \delta = \frac{1}{2}\left(1-\left(\frac{-3}{p}\right)\right), \ \ \varepsilon = \frac{1}{2}\left(1-\left(\frac{-4}{p}\right)\right).$$
If $N \in \{2, 3, 5, 7\}$, then $j_N^* \in \mathbb{F}_{p^2}$, by \cite[Thm. 6.1]{mor1}.  For examples of these polynomials see \cite[Tables 2,3]{mor3}.  Sakai \cite[Conj. 5.1]{sa2} has conjectured that $ss_p^{(7*)}(X)$ is congruent (mod $p$) to one of a sequence of orthogonal polynomials $\{A_n^{(N)}(X)\}_{n \ge 0}$, which he constructed using an inner product on the set of holomorphic modular functions (on the upper half-plane) for $\Gamma_0^*(7)$.  These polynomials, which he calls the Atkin orthogonal polynomials for $\Gamma_0^*(7)$, are obtained using the Gram-Schmidt process with respect to this inner product applied to the sequence $\{j_7^*(\tau)^n\}$.  Sakai \cite[Conj. 5.2, p. 2255]{sa2} and Nakaya \cite[Conj. 1, p. 489]{na} also conjecture that $ss_p^{(7*)}(X)$ is congruent to a specific solution of Heun's differential equation.
\medskip

In this paper we are concerned with the following conjecture of Nakaya.  \medskip

\noindent {\bf Nakaya's conjecture on linear factors.} (\cite[Conjecture 5]{na})
{\it Let $ p \ge 5$ be a prime number and
$$N \in \mathfrak{S} = \{2, 3, 5, 7, 11, 13, 17, 19, 23, 29, 31, 41, 47, 59, 71\}$$
be a prime divisor of the order of the Monster group, with $N \neq 2, p$.  Then the number of linear factors $L^{(N*)}(p)$ of the supersingular polynomial $ss_p^{(N*)}(X)$ for the Fricke group $\Gamma_0^*(N)$ is given by
\begin{align*}
L^{(N*)}(p) = & \ \frac{1}{2}\left(1+\left(\frac{-p}{N}\right) \right) L(p)\\
& \ +\frac{1}{8}\Big\{2+\left(1-\left(\frac{-1}{Np}\right) \right) \left(2+\left(\frac{-2}{Np}\right) \right) \Big\} h(-Np),
\end{align*}
where $L(p)$ is the number of supersingular $j$-invariants of elliptic curves in characteristic $p$ which lie in the prime field $\mathbb{F}_p$ and $h(-Np)$ is the class number of the field $\mathbb{Q}(\sqrt{-Np})$.}
\bigskip

Nakaya proved similar formulas for $N=2, 3$ in \cite[Thm. 4]{na} and used this to show that the primes $p$ for which $ss_p^{(2*)}(X)$ or $ss_p^{(3*)}(X)$ are products of linear factors (mod $p$) are the primes dividing the orders of certain sporadic simple groups, namely, the baby monster $B$ and the Fischer group $Fi_{24}$, respectively, in the notation of \cite[pp. 296-297]{con}.  (A simpler proof of the connection with the orders of these simple groups is given in \cite{mor3}.) \medskip

A proof of Nakaya's conjecture for $N=5$ is given in \cite{mor1} and \cite{mor2} which depends on knowing the number of irreducible factors of various types of the Hasse invariant $\hat H_{5,p}(X)$ in characteristic $p$ for the elliptic curve $E_5$, the Tate normal form for a point of order $5$.  The results necessary for establishing the conjecture in this case were proved in \cite{mor1} (for linear factors and quadratic factors with constant term $-1$) and \cite{mor2} (for certain quartic and related quadratic factors). \medskip

In this paper I will show that Nakaya's conjecture for $N=7$ also depends on knowing the numbers of irreducible factors of certain types of the Hasse invariant $\hat H_{7,p}(X)$ for the Tate normal form $E_7$ for a point of order $7$.  Specifically, I show that Nakaya's conjecture follows from formulas for the number of linear factors, the number of irreducible cubic factors, and the numbers of irreducible quadratic and sextic factors of specific forms dividing $\hat H_{7,p}(X)$ in characteristic $p$.  Conjectural formulas for these numbers in terms of the class numbers $h(-p)$ and $h(-7p)$ of the quadratic fields $K = \mathbb{Q}(\sqrt{-p})$ and $K' = \mathbb{Q}(\sqrt{-7p})$ are stated in Section 2, and the proof that Nakaya's conjecture for $N=7$ follows from Conjectures \ref{conj:1}-\ref{conj:3} is given in Section 3. \medskip

In the remainder of the paper I give a proof of Conjecture \ref{conj:1} on the numbers of linear and irreducible cubic factors of $\hat H_{7,p}(X)$.  The results for these factors can be stated as follows.  For notational convenience and to align with the notation of \cite{mor1} I prove this conjecture using the letter $l$ in place of $p$ for the characteristic under consideration. See \cite[Conj. 1, p. 260]{mor1}. \bigskip

\noindent {\bf Theorem A.} {\it Let $l \neq 2, 3, 7$ be a prime and denote by $h(-l)$ the class number of the quadratic field $K = \mathbb{Q}(\sqrt{-l})$.
\begin{enumerate}[a)]
\item If $l \equiv 6$ (mod $7$), the number $N_1$ of distinct linear factors in the parameter $a$ which divide the Hasse invariant $\hat H_{7,l}(a)$ of the Tate normal form $E_7(a)$ in characteristic $l$ is
\begin{align*}
N_1 = \begin{cases}
3h(-l), \ &\textrm{if} \ \ l \equiv 1 \ (\textrm{mod} \ 4);\\
3\left(3-\left(\frac{2}{l}\right)\right)h(-l), \ &\textrm{if} \ \ l \equiv 3 \ (\textrm{mod} \ 4).
\end{cases}
\end{align*}
This is six times the number of supersingular $j$-invariants which lie in $\mathbb{F}_l$.
\item If $l \equiv 3,5$ (mod $7$), the number $N_3$ of distinct irreducible cubics which divide $\hat H_{7,l}(a)$ in characteristic $l$ is
\begin{align*}
N_3 = \begin{cases}
h(-l), \ &\textrm{if} \ \ l \equiv 1 \ (\textrm{mod} \ 4);\\
\left(3-\left(\frac{2}{l}\right)\right)h(-l), \ &\textrm{if} \ \ l \equiv 3 \ (\textrm{mod} \ 4).
\end{cases}
\end{align*}
This is twice the number of supersingular $j$-invariants which lie in $\mathbb{F}_l$.
\end{enumerate}}
\label{thm:A}
\medskip

Part (a) of this theorem is the analogue for $N=7$ of \cite[Thm. 1.1]{mor1} for the case $N=5$ and Part (b) is the analogue of \cite[Thm. 1.3]{mor1}.  Note that the primes mentioned in this theorem are the only primes for which $\hat H_{7,l}(X)$ has linear or irreducible cubic factors, by \cite[Thm. 5.2]{mor1}. \medskip

The key to the proof of the theorem is showing that certain class fields over the field $K = \mathbb{Q}(\sqrt{-l})$, for primes $l$ satisfying $\left(\frac{-l}{7}\right) = +1$, are generated by values of the modular function
$$h(\tau) = q^{-1} \prod_{n \ge 1}{\frac{(1-q^{7n-3})(1-q^{7n-4})(1-q^{7n-2})^2(1-q^{7n-5})^2}{(1-q^{7n-1})^3(1-q^{7n-6})^3}}, \ q = e^{2 \pi i \tau}.$$
See \cite{du}.  Specifically, let $(7)= \wp_7 \wp_7'$ be the factorization of the ideal $(7)$ in the ring of integers $R_K$ of $K$.  The class fields in question are $\Sigma_{\wp_7'} \Omega_f$ and $\Sigma_{\wp_7} \Omega_f$, where $\Sigma_\mathfrak{f}$ is the ray class field of conductor $\mathfrak{f}$ over $K$ and $\Omega_f$ is the ring class field of conductor $f$, where $f = 1$ or $2$ if $l \equiv 3$ (mod $4$) and $f = 1$ if $l \equiv 1$ (mod $4$).  Putting $-d = d_K f^2$, where $d_K$ is the discriminant of $K$, let
$$w = \frac{v+\sqrt{-d}}{2},  \ \ \wp_7^2 \mid w, \ \ (w,f) = 1.$$
Then the results of Theorems \ref{thm:5} and \ref{thm:6} in Section 5 are that
\begin{equation}
\Sigma_{\wp_7'} \Omega_f = \mathbb{Q}(h(w/7)) \ \ \textrm{and} \ \ \Sigma_{\wp_7} \Omega_f = \mathbb{Q}(h(-1/w)).
\label{eqn:S}
\end{equation}
This is analogous to the result of \cite[Thm. 1.1]{mor4}.  These facts are used in Theorem \ref{thm:n1} to show how class field theory provides the link between the modular function relation $R_7(j(\tau),j_7^*(\tau)) = 0$ and the definition of the supersingular invariants for $\Gamma_0^*(7)$. \medskip

In order to prove these results, some important relationships between modular functions are collected in Section 4.  In particular, I show that the function $h(\tau)$ satisfies $f_7(h(\tau),j_7^*(\tau)) = 0$, where
\begin{align*}
f_7(x,t)&=x^6-(3+t) x^5+(6+9 t) x^4-(7+13 t) x^3+(6+4 t) x^2\\
& \ \ \ +(-3+t) x +1\\
& = (x^2 - x + 1)^3 - t x(x - 1)(x^3 - 8x^2 + 5x + 1)
\end{align*}
gives the form of the irreducible $6$-th degree factors of $\hat H_{7,l}(X)$ which are counted in Conjecture \ref{conj:2}. \medskip

In Section 6 the relationship between various fields of modular functions are used to show that the polynomial in two variables $z$ and $j$,
$$F(z,j) = (z^2-3z+9)(z^2-11z+25)^3-j(z-8),$$
has Galois group equal to $\textsf{PGL}(2,7)$ over $\mathbb{Q}(j)$ (see Lemma \ref{lem:3} in Section 6), from which it follows that $F(z,j) \equiv 0$ (mod $l$) has at most two solutions $z \in \mathbb{F}_l$, for any supersingular $j$-invariant $j \neq 0, 1728$ in $\mathbb{F}_l$ and $l \equiv 3, 5, 6$ (mod $7$).  Together with the facts in (\ref{eqn:S}), this is used to prove Theorem \ref{thm:7}, that the polynomial
\begin{align*}
G(x,j) = & (x^2 - x + 1)^3 (x^6 + 229x^5 + 270x^4 - 1695x^3 + 1430x^2 - 235x + 1)^3\\
&\ - jx(x-1)(x^3 - 8x^2 + 5x + 1)^7
\end{align*}
has exactly six linear factors (mod $l$) if $l \equiv 6$ (mod $7$) and two irreducible cubic factors (mod $l$) if $l \equiv 3, 5$ (mod $7$), for any supersingular $j$-invariant $j \neq 0, 1728$ in characteristic $l$.  Then Theorem A follows from a formula for the Hasse invariant (see Section 2). \medskip

In Section 7 I prove as a corollary to Theorem \ref{thm:7} that the powers of $h(w/7)$, with $w$ as above, form an $\mathfrak{l}$-integral basis of $\Sigma_{\wp_7'} \Omega_f/K$, where $\mathfrak{l}^2 = (l)$ in $K$.  This follows from the fact that the minimal polynomial $P_d(x)$ of $h(w/7)$ over $\mathbb{Q}$ factors (mod $l$) as a product of the squares of distinct linear or distinct irreducible cubic factors in cases (a) and (b) of Theorem A, respectively.  In the Appendix I give a proof of an important known identity in Theorem \ref{thm:10} relating $h(\tau)$ and the Dedekind $\eta$-function $\eta(\tau)$.  The proof is included here for the convenience of the reader. \medskip

This paper is dedicated to John Brillhart, my undergraduate advisor and co-author, who passed away in May, 2022.  His inspiring teaching and playful attitude towards research and all things mathematical helped to put my own mathematical career and joy for doing mathematics on a firm footing.  Long before undergraduate research projects were popular, the two papers we wrote together while I was an undergraduate (one with John Lomont) taught me the art of successful research and good mathematical writing, and gave me a powerful head start in graduate school.  By 1971, before I attended the University of Arizona as an undergraduate, John had empirically discovered the relations that we proved in the paper \cite[Thm. 1(a,c,d)]{brm}.  We kept these relations under wraps until we were able to learn enough to find a proof, more than $30$ years after he discovered them.  I am sure he would be especially pleased to see the congruences in Theorem \ref{thm:9} and the corresponding congruences in \cite[Thm. 4.1, Cor. 4.4]{mor1}, since polynomial congruences and the beautiful material in van der Waerden's ``Modern Algebra'' were often a subject of our conversations. 

\section{Conjectures for $E_7$}

The Tate normal form for a point of order $7$ is the elliptic curve
$$E_7: \ \ Y^2+(1+a-a^2)XY+(a^2-a^3)Y=X^3+(a^2-a^3)X^2,$$
whose $j$-invariant is
$$j_7(a) = \frac{(a^2-a+1)^3(a^6-11a^5+30a^4-15a^3-10a^2+5a+1)^3}{a^7(a-1)^7(a^3-8a^2+5a+1)},$$
(see \cite{du}); and whose Hasse invariant is the value at $x=a$ of the polynomial
\begin{align*}
\hat H_{7,l}(x) & = (x^2-x+1)^r(x^6-11x^5+30x^4-15x^3-10x^2+5x+1)^r\\
& \ \times (x^{12}-18x^{11}+117x^{10}-354x^9+570x^8-486x^7+273x^6\\
& \ -222x^5+174x^4-46x^3-15x^2+6x+1)^s\\
& \ \times x^{7n_l}(x-1)^{7n_l}(x^3-8x^2+5x+1)^{n_l}J_l(j_7(x))
\end{align*}
in characteristic $l$.  This expression for $\hat H_{7,l}(x)$ follows from the formulas in \cite[p. 236]{mor0}.  Here, as before,
\begin{align*}
& r  = r_l =  \frac{1}{2} \left(1-\left(\frac{-3}{l} \right) \right), \ \ \ s = s_l =  \frac{1}{2} \left(1-\left(\frac{-4}{l} \right) \right), \ \ \ n_l = \lfloor p/12 \rfloor;\\
& J_l(t) \equiv \sum_{k=0}^{n_l}{{2n_l + s\atopwithdelims ( ) 2k + s}{2n_l-2k \atopwithdelims ( ) n_l-k}(-432)^{n_l-k}(t-1728)^k} \ \ (\textrm{mod} \ l).
\end{align*}

Recall the following facts from \cite[Thm. 5.2.]{mor1}. \bigskip

\newtheorem{thm}{Theorem}

\begin{thm} If $l \neq 2, 3, 7$ is prime, the irreducible factors of $\hat H_{7,l}(x)$ over $\mathbb{F}_l$ are:
\begin{enumerate}[i)]
\item quadratic, if $l \equiv 1$ (mod $7$);
\item linear or quadratic, if $l \equiv 6$ (mod $7$);
\item $x^2-x+1$ \ or \ sextic, if $l \equiv 2,4$ (mod $7$);
\item $x^2-x+1$, cubic or sextic, if $l \equiv 3,5$ (mod $7$).
\end{enumerate}
\label{thm:1}
\end{thm}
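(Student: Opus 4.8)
The plan is to read the degrees of the irreducible factors of $\hat H_{7,l}(a)$ as orbit lengths of the Frobenius $\phi_l$ acting on a moduli problem. Away from the cusp contributions $x^{7n_l}(x-1)^{7n_l}(x^3-8x^2+5x+1)^{n_l}$ (which lie over $j=\infty$), a root $a_0\in\overline{\mathbb F}_l$ of $\hat H_{7,l}$ is a value of the Tate parameter for which $E_7(a_0)$ is supersingular; equivalently, $a_0$ determines an isomorphism class of a pair $(E,\pm P)$ with $E$ supersingular and $P$ of exact order $7$, since $(E,P)\cong(E,-P)$ via $[-1]$. The degree of the irreducible factor through $a_0$ is the length of the $\phi_l$-orbit of $(E,\pm P)$, because $\phi_l$ sends $(E,\pm P)$ to $(E^{(l)},\pm P^{(l)})$ and $a_0\in\mathbb F_{l^d}$ exactly when $\phi_l^d$ fixes this class. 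Everything thus reduces to orbit lengths on the $24$ classes $\pm P$ over a fixed supersingular $j$, together with the motion $j\mapsto j^l$.

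I would first treat a supersingular $j(E)\in\mathbb F_l$ with $j\neq0,1728$. Here $E$ has a model over $\mathbb F_l$, its Frobenius $\pi=\phi_l$ has trace $0$, so $\pi^2=-l$, and reduction mod $7$ gives an element of $\mathrm{GL}_2(\mathbb F_7)$ with characteristic polynomial $T^2+l \pmod 7$. This splits over $\mathbb F_7$ exactly when $\left(\frac{-l}{7}\right)=+1$, i.e. $l\equiv3,5,6\ (\mathrm{mod}\ 7)$, and is irreducible (so $E[7]\cong\mathbb F_{49}$ with $\pi$ multiplication by some $\mu$, $\mu^2=-l$) when $\left(\frac{-l}{7}\right)=-1$, i.e. $l\equiv1,2,4$. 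In the split case $\pi$ has eigenvalues $\pm\lambda$ with $\lambda^2=-l$; a $\pm$-point on an eigenline has orbit length equal to the order of $\lambda$ in $\mathbb F_7^\times/\{\pm1\}$, which is $1$ for $l\equiv6$ (linear factors) and $3$ for $l\equiv3,5$ (cubics), whereas an off-eigenline point $(a,b)$ satisfies $\pi^k(a,b)=(\lambda^k a,(-\lambda)^k b)$, and a short check gives orbit length $2$ for $l\equiv6$ (quadratics) and $6$ for $l\equiv3,5$ (sextics). In the inert case all $\pm$-points share one orbit length, the order of $\mu$ in $\mathbb F_{49}^\times/\{\pm1\}$: since $\mu^2=-l$ is $-1,5,3$ for $l\equiv1,2,4$, one finds $\mu^2\equiv1$ (length $2$, quadratics) for $l\equiv1$, and $\mu^6\equiv-1$ (length $6$, sextics) for $l\equiv2,4$.

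Next I would dispose of the supersingular $j\in\mathbb F_{l^2}\setminus\mathbb F_l$, occurring in conjugate pairs $j,j^l$. For these every orbit length is even, since $\phi_l$ already moves $j$; and $\phi_l^2=\phi_{l^2}$ acts on $E[7]$ as the scalar $\pm l$ (the $l^2$-Frobenius of a supersingular curve, all of whose endomorphisms are defined over $\mathbb F_{l^2}$, being $\pm l$). Hence the orbit length of any $\pm P$ is $2m$, where $m$ is the order of $l$ in $\mathbb F_7^\times/\{\pm1\}$: here $m=1$ for $l\equiv\pm1\ (\mathrm{mod}\ 7)$ and $m=3$ otherwise, giving $2m=2$ (quadratics) for $l\equiv1,6$ and $2m=6$ (sextics) for $l\equiv2,3,4,5$. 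Assembling the two cases over each residue class mod $7$ reproduces (i)--(iv) for the generic supersingular points: only quadratics for $l\equiv1$; linear and quadratic for $l\equiv6$; sextics for $l\equiv2,4$; cubics and sextics for $l\equiv3,5$.

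It remains to account for the exceptional factors over the branch points $j=0$ and $j=1728$, and this is where I expect the real work to lie. The factor $x^2-x+1$ is precisely the part of $\hat H_{7,l}$ lying over $j=0$; it occurs (with $r=1$) exactly when $\left(\frac{-3}{l}\right)=-1$, i.e. $l\equiv2\ (\mathrm{mod}\ 3)$, which is also the condition for $j=0$ to be supersingular, and in that case its discriminant $-3$ is a nonsquare mod $l$, so it is an irreducible quadratic---explaining its appearance as a named exceptional factor in (iii) and (iv). The companion sextic $x^6-11x^5+30x^4-15x^3-10x^2+5x+1$ and the degree-$12$ factor over $j=1728$ must be shown to split only into factors of the degrees already allowed; the difficulty is that at $j=0$ and $j=1728$ the curve has extra automorphisms ($\mathrm{Aut}(E)\cong\mathbb Z/6$ and $\mathbb Z/4$), so the clean count of $24$ free $\pm$-points collapses to $8$ and $12$ orbits and the Frobenius bookkeeping must be redone with this extra group action. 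The \textbf{main obstacle} is therefore the ramified-fibre analysis: verifying that, for each residue class mod $7$, the Frobenius orbits on the $\mathrm{Aut}(E)$-quotients at $j=0,1728$ land in $\{1,2,3,6\}$ consistently with (i)--(iv)---for instance that over $j=1728$ one gets only sextics when $l\equiv2,4$, and only cubics or sextics when $l\equiv3,5$. Once these special fibres are settled, the classification of the irreducible factors of $\hat H_{7,l}(x)$ in Theorem \ref{thm:1} follows by collecting the orbit lengths across all supersingular $j$.
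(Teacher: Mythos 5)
First, a point of reference: the paper does not prove this statement at all --- it is recalled verbatim from \cite[Thm.\ 5.2]{mor1} --- so there is no in-paper proof to measure you against. Judged on its own terms, your strategy (reading irreducible factor degrees as Frobenius-orbit lengths on classes $(E,\pm P)$ with $P$ of exact order $7$) is a legitimate and natural one, and your computation at the generic supersingular fibres is correct: for $j\in\mathbb{F}_l$, $j\neq 0,1728$, the trace-zero Frobenius with $\pi^2=-l$ cannot reduce to a scalar mod $7$ (that would force $(\pi-c)/7\in\mathrm{End}(E)$, hence $49\mid c^2+l$ with $7\mid c$, impossible), so the split/inert dichotomy for $T^2+l\pmod 7$ and the orders of $\lambda$, $\mu$ in $\mathbb{F}_7^\times/\{\pm1\}$ and $\mathbb{F}_{49}^\times/\{\pm1\}$ give exactly the degrees $1,2,3,6$ in the residue classes claimed; and for supersingular $j\in\mathbb{F}_{l^2}\setminus\mathbb{F}_l$ (necessarily $\neq 0,1728$) every $\mathbb{F}_{l^2}$-model has $\pi_{l^2}=\pm l$, so your $2m$ count is also right.

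The genuine gap is the one you name yourself: the fibres over $j=0$ and $j=1728$. This is not a cosmetic omission, because the statement of the theorem is shaped precisely by those fibres --- the exceptional factor $x^2-x+1$ in cases (iii) and (iv) is the entire reason those cases do not read simply ``sextic'' or ``cubic or sextic,'' and one must also rule out, e.g., a quadratic or quartic factor of the degree-$6$ companion of $x^2-x+1$ when $l\equiv 2,4\pmod 7$, and of the degree-$12$ factor over $j=1728$ in all four residue classes. Carrying out the orbit count there requires working with $\pi$ together with the extra automorphisms $\zeta_6$ or $i$ (which anticommute or twist-commute with $\pi$ since $j=0,1728$ is supersingular exactly when the relevant automorphism is not $\mathbb{F}_l$-rational), and this is real work rather than bookkeeping: the present paper's Propositions \ref{prop:2} and \ref{prop:4} do exactly this analysis for $l\equiv 3,5,6\pmod 7$, but by explicit algebraic number theory (the sextic factor of $f_0$ generates $\mathbb{Q}(\zeta_{21})^+$, and $f_{1728}$ is split over $\mathbb{Q}(\sqrt7)$ into two sextics whose factorization parity is controlled by the Pellet--Stickelberger--Voronoi theorem), and the cases $l\equiv 1,2,4\pmod 7$ at these fibres are not treated anywhere in this paper. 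Until you supply that ramified-fibre analysis --- in your framework, the Frobenius orbits on the $8$ and $12$ classes modulo $\mathrm{Aut}(E)$ --- the proof is incomplete, although the skeleton you have set up is sound and the remaining verification is finite and feasible.
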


We first state the following conjectures.  We will prove Conjecture 1 in Section 6 of this paper and leave the discussion of Conjectures 2 and 3 to the sequel.

\newtheorem{conj}{Conjecture}

\begin{conj}{(\cite[p.260]{mor1})} Let $l \neq 2, 3, 7$ be a prime.
\begin{enumerate}[a)]
\item If $l \equiv 6$ (mod $7$), the number $N_1$ of distinct linear factors which divide $\hat H_{7,l}(x)$ is
\begin{align*}
3h(-l), \ \textrm{if} \ \ l \equiv 1 \ (\textrm{mod} \ 4);\\
3\left(3-\left(\frac{2}{l}\right)\right)h(-l), \ \textrm{if} \ \ l \equiv 3 \ (\textrm{mod} \ 4).
\end{align*}
This is six times the number of supersingular $j$-invariants which lie in $\mathbb{F}_l$.
\item If $l \equiv 3,5$ (mod $7$), the number $N_3$ of distinct irreducible cubics which divide $\hat H_{7,l}(x)$ is
\begin{align*}
h(-l), \ \textrm{if} \ \ l \equiv 1 \ (\textrm{mod} \ 4);\\
\left(3-\left(\frac{2}{l}\right)\right)h(-l), \ \textrm{if} \ \ l \equiv 3 \ (\textrm{mod} \ 4).
\end{align*}
This is twice the number of supersingular $j$-invariants which lie in $\mathbb{F}_l$.
\end{enumerate}
\label{conj:1}
\end{conj}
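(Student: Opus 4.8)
\noindent\emph{Proof proposal.} The plan is to translate the count of linear and irreducible cubic factors of $\hat H_{7,l}(x)$ into a count of supersingular $j$-invariants in $\mathbb{F}_l$ together with a local analysis of each fiber of the $j$-map, controlling that analysis by the Galois group $\textsf{PGL}(2,7)$ and the class field identifications (\ref{eqn:S}). The parameter $a$ is a coordinate on the genus-zero curve $X_1(7)$, and $a\mapsto j_7(a)$ is the $j$-map of degree $24$. From the displayed formula for $\hat H_{7,l}(x)$, the factors $x^2-x+1$, the degree-six and degree-twelve polynomials, and $x(x-1)(x^3-8x^2+5x+1)$ collect exactly the parameters with $j_7(a)\in\{0,1728,\infty\}$, while every other irreducible factor divides the numerator of $j_7(a)-j_0$ for some supersingular $j_0$ with $J_l(j_0)=0$. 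Thus a linear (resp. irreducible cubic) factor records an $\mathbb{F}_l$-rational (resp. degree-three) point in a fiber $j_7^{-1}(j_0)$, and the global totals are obtained by summing the per-fiber counts over all supersingular $j_0\in\mathbb{F}_l$. The task reduces to determining, for a fixed supersingular $j_0$, the $\mathbb{F}_l$-splitting type of the $24$ points of $j_7^{-1}(j_0)$.

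Next I would factor this fiber through the intermediate cover $X_1(7)\to X_0(7)\to X(1)$ of degrees $3$ and $8$. The lower cover is governed by $F(z,j)$, whose eight roots $z$ correspond to the eight cyclic subgroups of order $7$ of $E$; by Lemma \ref{lem:3} its Galois group is $\textsf{PGL}(2,7)$ acting as on $\mathbb{P}^1(\mathbb{F}_7)$, and since a nontrivial element of $\textsf{PGL}(2,7)$ fixes at most two points, $F(z,j_0)=0$ has at most two roots in $\mathbb{F}_l$ for supersingular $j_0\neq 0,1728$. The hypothesis $l\equiv 3,5,6\ (\mathrm{mod}\ 7)$ is exactly $\left(\frac{-l}{7}\right)=+1$, so $7$ splits in $K=\mathbb{Q}(\sqrt{-l})$ as $\wp_7\wp_7'$; the identifications $\Sigma_{\wp_7'}\Omega_f=\mathbb{Q}(h(w/7))$ and $\Sigma_{\wp_7}\Omega_f=\mathbb{Q}(h(-1/w))$ of (\ref{eqn:S}) then exhibit, after reduction at $\mathfrak{l}\mid l$, exactly two such $\mathbb{F}_l$-rational subgroups, one for each of $\wp_7,\wp_7'$. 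Hence every relevant fiber contains precisely two rational order-$7$ subgroups.

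It then remains to lift each rational subgroup through the degree-three cover $X_1(7)\to X_0(7)$, which is the content of Theorem \ref{thm:7} realized by $G(x,j)$. A rational subgroup $C$ has $(7-1)/2=3$ generators up to sign, giving three parameters $a$, and Frobenius permutes them through the image of $l$ in $(\mathbb{Z}/7)^\times/\{\pm1\}\cong\mathrm{Gal}(\mathbb{Q}(\zeta_7+\zeta_7^{-1})/\mathbb{Q})\cong\mathbb{Z}/3$. For $l\equiv 6\ (\mathrm{mod}\ 7)$ this image is trivial, all three parameters are rational, and the two subgroups yield six linear factors; for $l\equiv 3,5\ (\mathrm{mod}\ 7)$ the image has order three and cycles the generators, so each subgroup yields one irreducible cubic, giving two. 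After checking $j_0=0,1728$ directly from the explicit factors of $\hat H_{7,l}$ (using $\left(\frac{-3}{l}\right)$ and $\left(\frac{-4}{l}\right)$ to decide when these invariants are supersingular and rational and confirming they contribute the same six, resp. two), summing over all supersingular $j_0\in\mathbb{F}_l$ gives $N_1=6S$ and $N_3=2S$, where $S$ is the number of supersingular $j$-invariants in $\mathbb{F}_l$; the stated class-number formulas follow from the classical Deuring count of $S$ in terms of $h(-l)$ and the residue of $l$ modulo $4$.

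The main obstacle is the class field input (\ref{eqn:S}): proving that the values $h(w/7)$ and $h(-1/w)$ at the CM point $w$ with $\wp_7^2\mid w$ generate precisely the ray class fields $\Sigma_{\wp_7'}\Omega_f$ and $\Sigma_{\wp_7}\Omega_f$, and matching their reductions at $\mathfrak{l}$ with the two rational $7$-subgroups. This CM/reduction dictionary is what upgrades the ``at most two'' bound from $\textsf{PGL}(2,7)$ to ``exactly two'' and apportions the two subgroups between $\wp_7$ and $\wp_7'$; once it is established, the splitting of the degree-three cover and the final summation are comparatively routine.
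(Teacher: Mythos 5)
Your proposal follows essentially the same route as the paper: reduce to a per-fiber count over supersingular $j_0\in\mathbb{F}_l$, use the $\textsf{PGL}(2,7)$ Galois group of $F(z,j)$ for the upper bound of two rational roots, use the class-field identifications (\ref{eqn:S}) (reductions of $h(w/7)$ and $h(-1/w)$, one for each of $\wp_7,\wp_7'$) for the matching lower bound, lift through the degree-three cover $X_1(7)\to X_0(7)$ (your cyclotomic-character description of the three generators mod $\pm1$ is equivalent to the paper's computation of the order of $\mathfrak{l}$ in the ray class group mod $\wp_7'$), treat $j_0=0,1728$ via the explicit factors, and finish with the Deuring count. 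The one step you assert without justification is why the Frobenius class at a supersingular $j_0\neq 0,1728$ is \emph{nontrivial} in $\textsf{PGL}(2,7)$ (only then does ``a nontrivial linear fractional map fixes at most two points'' give the bound); the paper supplies this by the Pellet--Stickelberger--Voronoi parity argument, using $\textrm{disc}_z(F(z,j))=-7^7j^4(j-1728)^4$ and $\left(\frac{-7}{l}\right)=-1$ to force an odd number of irreducible factors, so $F(z,j_0)$ cannot split completely.
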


\begin{conj} For a prime $l>7$ with $l \equiv 2, 3, 4, 5$ (mod $7$), the number $N_6$ of irreducible factors of the form
\begin{align*}
f_7(x,t)&=x^6-(3+t) x^5+(6+9 t) x^4-(7+13 t) x^3+(6+4 t) x^2+(-3+t) x +1\\
& = (x^2 - x + 1)^3 - t x(x - 1)(x^3 - 8x^2 + 5x + 1), \ \ t \in \mathbb{F}_l,
\end{align*}
dividing $\hat H_{7,l}(x)$ is a simple function of the class number $h(-7l)$ which depends on the congruence class of $l$ modulo $24 \cdot 7=168$.  Specifically:
\begin{enumerate}[A)]
\item If $l \equiv 2, 4$ (mod $7$), $N_6$ is given by
\begin{align*}
& \frac{1}{2}h(-7l)-\frac{1}{2}\left(1-\left(\frac{-3}{l}\right) \right), \ \textrm{if} \ \ l \equiv 1 \ (mod \ 8);\\
& h(-7l)-\frac{1}{2}\left(1-\left(\frac{-3}{l}\right) \right), \ \textrm{if} \ \ l \equiv 5 \ (mod \ 8);\\
& \frac{1}{4}h(-7l)-\frac{1}{2}\left(1-\left(\frac{-3}{l}\right) \right), \ \textrm{if} \ \ l \equiv 3 \ (mod \ 4);
\end{align*}
\item If $l \equiv 3, 5$ (mod $7$), $N_6$ is given by
\begin{align*}
& \frac{1}{2}h(-7l)-\frac{1}{2}\left(3-\left(\frac{-3}{l}\right) \right), \ \textrm{if} \ \ l \equiv 1 \ (mod \ 8);\\
& h(-7l)-\frac{1}{2}\left(3-\left(\frac{-3}{l}\right) \right), \ \textrm{if} \ \ l \equiv 5 \ (mod \ 8);\\
& \frac{1}{4}h(-7l)-\frac{1}{2}\left(3-\left(\frac{-3}{l}\right) \right), \ \textrm{if} \ \ l \equiv 3 \ (mod \ 4).
\end{align*}
\end{enumerate}
\label{conj:2}
\end{conj}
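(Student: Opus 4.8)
The plan is to carry the class-field-theoretic method behind Theorem A over from $K = \mathbb{Q}(\sqrt{-l})$ to the field $K' = \mathbb{Q}(\sqrt{-7l})$, which is what forces the class number $h(-7l)$ to appear. The reduction step rests on the relation $f_7(h(\tau), j_7^*(\tau)) = 0$ recorded in Section 4: it matches each irreducible sextic factor $f_7(x,t)$ of $\hat H_{7,l}(x)$ with a value $t = j_7^*$ that is supersingular for $\Gamma_0^*(7)$ and lies in $\mathbb{F}_l$. Thus $N_6$ counts those $\mathbb{F}_l$-rational supersingular values $j_7^*$ for which $f_7(x, j_7^*)$ remains irreducible over $\mathbb{F}_l$. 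I would first show that in Case A ($l \equiv 2, 4 \pmod{7}$) the generic such sextic is irreducible, while in Case B ($l \equiv 3, 5 \pmod{7}$) one must split off the values of $j_7^*$ whose sextic $f_7(x, j_7^*)$ breaks into two cubics --- these are exactly the cubic factors already enumerated in Theorem A(b).

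The heart of the count is a $K'$-analogue of the field identities in (\ref{eqn:S}). I would prove that the roots $h(w)$ attached to these supersingular $j_7^*$ generate over $K'$ the ray-class-type fields that play the role of $\Sigma_{\wp_7'}\Omega_f$ and $\Sigma_{\wp_7}\Omega_f$, so that each irreducible sextic is a complete Frobenius orbit of six conjugate $h$-values and the number of distinct sextics is $\tfrac16$ of the degree of the appropriate class field. As in Section 6, the fact that $F(z,j)$ has Galois group $\textsf{PGL}(2,7)$ over $\mathbb{Q}(j)$ (Lemma 3) would be used to control the factorization of $G(x,j)$ at each supersingular $j \neq 0, 1728$, now extending Theorem 7 from the linear and cubic cases --- and to the congruences $l \equiv 2, 4 \pmod{7}$ not treated there --- so as to read off the sextic factors directly.

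Converting this class-field degree into a class number produces the multipliers $\tfrac12, 1, \tfrac14$ through genus theory of the relevant order in $K'$. When $l \equiv 3 \pmod{4}$ the field discriminant is $-28l$, the product of the three prime discriminants $-4, -7, -l$, giving four genera and the multiplier $\tfrac14$; when $l \equiv 1 \pmod{4}$ the discriminant $-7l = (-7)(l)$ has two genera and base multiplier $\tfrac12$, together with an extra factor of $2$ exactly when $2$ is inert in $K'$, that is when $l \equiv 5 \pmod{8}$, which is the source of the dependence on $l$ modulo $8$. I would then subtract the degenerate contributions. The supersingular point $j = 0$ yields the factor $(x^2 - x + 1)^3$ rather than a genuine $f_7$, and occurs precisely when $\left(\frac{-3}{l}\right) = -1$; this accounts for the term $-\tfrac12\left(1 - \left(\frac{-3}{l}\right)\right)$ present in both cases. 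In Case B one further degenerate value of $j_7^*$ --- linked to the cubic-producing points of Theorem A(b) and to $j = 1728$ --- contributes an additional $-1$, which is what replaces the constant $1$ by $3$ inside the parentheses.

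The hard part will be these last two steps: fixing the genus multiplier together with its $l \bmod 8$ dependence, and giving an exact accounting of the boundary points $j = 0, 1728$ and of the cubic-versus-sextic dichotomy in Case B. Both hinge on understanding how the Atkin--Lehner involution $\tau \mapsto -1/(7\tau)$ permutes the values of $h$ and how the CM points for $K'$ distribute among the genera --- a finer analysis than Theorem A required, because the order attached to $K'$ now carries a larger $2$-torsion subgroup in its class group and the prime $7$ ramifies in $K'$.
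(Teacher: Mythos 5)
There is no proof of this statement in the paper for you to be compared against: Conjecture \ref{conj:2} is stated precisely as a conjecture, and the author says explicitly that only Conjecture \ref{conj:1} is proved here, with the discussion of Conjectures \ref{conj:2} and \ref{conj:3} deferred to a sequel. What the paper does establish is the reduction in your first paragraph: Proposition \ref{prop:1} and the surrounding discussion in Section 3 show that the irreducible sextic factors $f_7(x,t)$ of $\hat H_{7,l}(x)$ correspond bijectively to certain supersingular values $t=j_7^*\in\mathbb{F}_l$, and that the degenerate values $t=0,-1,27$ (where $f_7(x,t)$ collapses to $(x^2-x+1)^3$ or to the square of a cubic) are exactly the source of the correction terms $-\tfrac12\bigl(1-\bigl(\tfrac{-3}{l}\bigr)\bigr)$ and $-\tfrac12\bigl(3-\bigl(\tfrac{-3}{l}\bigr)\bigr)$. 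That part of your outline is sound, though the paper runs the logic in the opposite direction, using Conjecture \ref{conj:2} as an input to deduce Nakaya's formula.

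Everything past that point is a plan rather than a proof, and the load-bearing steps are exactly the ones you defer. First, the claimed $K'=\mathbb{Q}(\sqrt{-7l})$ analogue of the identities (\ref{eqn:S}) is not formulated, let alone proved; the machinery of Sections 5--6 (Lemma \ref{lem:1}, Theorems \ref{thm:5} and \ref{thm:6}) rests on the splitting $(7)=\wp_7\wp_7'$ in $K$ and on the condition $\wp_7^2\mid w$, and in $K'$ the prime $7$ ramifies, so that set-up has no direct analogue --- one must first decide which order of $K'$ and which ideal classes parametrize the relevant CM values of $j_7^*$ on $\Gamma_0^*(7)$. (The author signals this is the right direction at the end of Section 5, but proves nothing about it.) Second, the genus-theoretic origin of the multipliers $\tfrac12,1,\tfrac14$ is asserted, not derived: counting prime discriminants explains a power of $2$, but the extra doubling at $l\equiv 5\ (\mathrm{mod}\ 8)$ needs an actual argument (in the $N=5$ analogue \cite{mor2} this comes from comparing class numbers of orders of conductors $1$ and $2$ and from identifying which ambiguous classes occur), and you offer only the observation that $2$ is inert. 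Third, extending Lemma \ref{lem:3} and Theorem \ref{thm:7} to the classes $l\equiv 2,4\ (\mathrm{mod}\ 7)$ --- where $\bigl(\tfrac{-l}{7}\bigr)=-1$ and the class fields over $K$ give no control at all --- is itself a substantial new step. Your road map is plausibly the intended one, but it does not constitute a proof, and the statement remains open in this paper.
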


\begin{conj} For a prime $l>7$ with $l \equiv 1,6$ (mod $7$), the number $N_2$ of irreducible quadratics $x^2+ax+b$ which divide $\hat H_{7,l}(x)$ modulo $l$ and satisfy $B(a,b) \equiv 0$ (mod $l$), where
$$B(x,y)=x^3 + (-5y + 8)x^2 + (-8y^2 + 6y + 5)x - y^3 - 5y^2 + 8y - 1,$$
is a function of $h(-7l)$.  This is equivalent to $a \equiv (\alpha - 1)b - \alpha$ (mod $l$), for some root $\alpha \in \mathbb{F}_l$ of $x^3 - 8x^2 + 5x + 1 \equiv 0$ (mod $l$).  Specifically:
\begin{enumerate}[A)]
\item If $l \equiv 1$ (mod $7$), $N_2$ is given by
\begin{align*}
& \frac{3}{2}h(-7l)-\left(1-\left(\frac{-3}{l}\right) \right), \ \textrm{if} \ \ l \equiv 1 \ (mod \ 8);\\
& 3h(-7l)-\left(1-\left(\frac{-3}{l}\right) \right), \ \textrm{if} \ \ l \equiv 5 \ (mod \ 8);\\
& \frac{3}{4}h(-7l)-\left(1-\left(\frac{-3}{l}\right) \right), \ \textrm{if} \ \ l \equiv 3 \ (mod \ 4).
\end{align*}
\item If $l \equiv 6$ (mod $7$), $N_2$ is given by
\begin{align*}
& \frac{3}{2}h(-7l)-\left(4-\left(\frac{-3}{l}\right) \right), \ \textrm{if} \ \ l \equiv 1 \ (mod \ 8);\\
& 3h(-7l)-\left(4-\left(\frac{-3}{l}\right) \right), \ \textrm{if} \ \ l \equiv 5 \ (mod \ 8);\\
& \frac{3}{4}h(-7l)-\left(4-\left(\frac{-3}{l}\right) \right), \ \textrm{if} \ \ l \equiv 3 \ (mod \ 4).
\end{align*}
\end{enumerate}
\label{conj:3}
\end{conj}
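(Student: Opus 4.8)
\noindent\textbf{Proof proposal for Conjecture \ref{conj:3}.}
The plan is to run the class-field-theoretic machine behind Theorem A, but with the field $K=\mathbb{Q}(\sqrt{-l})$ replaced by $K'=\mathbb{Q}(\sqrt{-7l})$, in which the level prime $7$ ramifies. The geometric starting point is that $h(\tau)$ is a Hauptmodul for $\Gamma_1(7)$, whose six cusps are $h=0,1,\infty$ together with the three roots $\alpha$ of $x^3-8x^2+5x+1$, and that the relation $f_7(h(\tau),j_7^*(\tau))=0$ realizes the six roots of $f_7(x,t)$ as the fiber of the degree-$6$ map $X_1(7)\to X_0^*(7)$, $h\mapsto j_7^*$. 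Thus for a fixed supersingular value $t=j_7^*$ the six corresponding values of $h$ reduce modulo $l$ to six roots of $\hat H_{7,l}(x)$. The factorization type of this fiber over $\mathbb{F}_l$ is governed by the image of Frobenius in the Galois group $\textsf{PGL}(2,7)$ of the Galois closure (cf. Lemma \ref{lem:3}), and by Theorem \ref{thm:1} it is a single irreducible sextic exactly when $l\equiv 2,3,4,5\ (\mathrm{mod}\ 7)$, the case of Conjecture \ref{conj:2}; whereas for $l\equiv 1,6\ (\mathrm{mod}\ 7)$ Frobenius acts on the fiber as an involution, which on the fibers relevant to Conjecture \ref{conj:3} is fixed-point-free and splits them into three quadratics over $\mathbb{F}_l$. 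The first step is to prove that these three quadratics are exactly the factors $x^2+ax+b$ satisfying $B(a,b)\equiv 0$, with $\alpha$ running over the three roots of the cubic; the pairing of the six $h$-values into these three quadratics should be induced by the Fricke involution $W_7:\tau\mapsto -1/(7\tau)$, which is already the source of the generators $h(w/7)$ and $h(-1/w)$ in \eqref{eqn:S}. This identification is what produces the factor $3$ relating the leading term $\tfrac{3}{2}h(-7l)$ of $N_2$ to the leading term $\tfrac{1}{2}h(-7l)$ of $N_6$: the same orbits are counted, but each contributes three quadratics rather than one sextic.

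The second step is the count of the underlying orbits. Because the three quadratics are defined over $\mathbb{F}_l$, they come from supersingular $j_7^*\in\mathbb{F}_l$ of the ``new'' type, namely those for which the two supersingular $j$-invariants with $R_7(j,j_7^*)=0$ are conjugate, $j^l=\bar j$; by CM theory this is precisely the condition that $j$ has CM by an order in $K'=\mathbb{Q}(\sqrt{-7l})$, in which $l$ ramifies. I would therefore establish analogues of Theorems \ref{thm:5} and \ref{thm:6} over $K'$: writing $(7)=\wp_7^2$ in $R_{K'}$ and choosing a suitable CM point $w'\in K'$ divisible by $\wp_7$, I would show that $h(w')$ and $W_7 h(w')$ generate a ring/ray class field over $K'$ and give, after reduction modulo a prime $\mathfrak{l}$ with $\mathfrak{l}^2=(l)$, one of the $B(a,b)\equiv 0$ quadratic factors. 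Counting the distinct such orbits is then a genus-theory computation for $K'$, yielding $\tfrac12 h(-7l)$ orbits up to corrections. The dependence on $l\bmod 8$ comes entirely from the splitting of $2$ in $K'$: since $-7\equiv 1\ (\mathrm{mod}\ 8)$ one has $-7l\equiv l\ (\mathrm{mod}\ 8)$, so $2$ splits when $l\equiv 1$, is inert when $l\equiv 5$, and ramifies (forcing the conductor $f'=1,2$) when $l\equiv 3\ (\mathrm{mod}\ 4)$, producing the multiples $\tfrac32,\,3,\,\tfrac34$ exactly as $\tfrac12,\,1,\,\tfrac14$ arise for $N_6$.

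The third step is to pin down the subtracted constants $(1-(\tfrac{-3}{l}))$ and $(4-(\tfrac{-3}{l}))$. The character $(\tfrac{-3}{l})$ enters through the elliptic point $j=0$: the factor $(x^2-x+1)^{r}$ of $\hat H_{7,l}$ with $r=\tfrac12(1-(\tfrac{-3}{l}))$ contributes the quadratic $x^2-x+1$ precisely when $(\tfrac{-3}{l})=-1$, and one checks directly that $B(-1,1)=0$, so this quadratic must be included or excluded in the count of step two and corrected for here. The difference $3$ between the constants in the cases $l\equiv 1$ and $l\equiv 6\ (\mathrm{mod}\ 7)$ should reflect the three cusp-related degenerations that occur only when $l\equiv 6\ (\mathrm{mod}\ 7)$, where $\hat H_{7,l}$ acquires the linear factors counted in Theorem A; disentangling these from the genuine $B(a,b)\equiv 0$ quadratics is the delicate part of the bookkeeping.

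The main obstacle will be the second step: establishing the precise class-field description over $K'$ and matching the arithmetic of the ramified prime $\wp_7$ (and of $l$, which also ramifies in $K'$) to the algebraic condition $B(a,b)\equiv 0$. Over $K$ the generation statement \eqref{eqn:S} rests on identifying reduction modulo $\mathfrak{l}$ with a Frobenius substitution, but over $K'$ both $7$ and $l$ ramify, so this must be replaced by a careful Shimura-reciprocity analysis of the ray class field $\Sigma_{\wp_7}$ over $K'$ and its interplay with the order of conductor $f'$. I expect the remaining work — fixing the three rational multiples, the correction constants, and verifying that no $B(a,b)\equiv 0$ quadratic is counted twice across the three cusps $\alpha$ — to parallel the passage from $F(z,j)$ to $G(x,j)$ in the proof of Theorem \ref{thm:7}, now carried out for the $W_7$-paired orbits over $K'$.
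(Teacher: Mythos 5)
The statement you are addressing is stated in the paper as Conjecture \ref{conj:3}, and the paper does not prove it: the author explicitly leaves the discussion of Conjectures \ref{conj:2} and \ref{conj:3} to a sequel, and the only facts established here are Lemma \ref{lem:0} (that $B(a,b)\equiv 0$ characterizes exactly the irreducible quadratic factors of $\hat H_{7,p}(x)$ that divide some $f_7(x,t)$ with $t\in\mathbb{F}_p$) and the surrounding Proposition \ref{prop:1}. So there is no proof in the paper to compare yours against. What can be said is that your high-level plan --- transferring the class-field machinery of Theorems \ref{thm:5} and \ref{thm:6} from $K=\mathbb{Q}(\sqrt{-l})$ to $K'=\mathbb{Q}(\sqrt{-7l})$, pairing the six $h$-values in a fiber of $f_7(x,t)$ into three quadratics via the Fricke involution, and locating the correction constants at the elliptic point $j=0$ and the degenerations $j_7^*=-1,27$ --- is consistent with the direction the paper signals at the end of Section 5 (``the remaining supersingular values in $\mathbb{F}_l$ arise as reductions of values of $j_7^*(\tau)$, where $\tau \in K'=\mathbb{Q}(\sqrt{-7l})$'') and with the analogy to \cite[Thm. 1.1]{mor2} that the paper itself draws at the end of Section 2.

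As a proof, however, the proposal has genuine gaps, several of which you flag yourself. The entire second step --- the analogues of Theorems \ref{thm:5} and \ref{thm:6} over $K'$, where both $7$ and $l$ ramify, and the identification of the reduced CM values with precisely the quadratics satisfying $B(a,b)\equiv 0$ --- is asserted as something you ``would show,'' not shown; this is the heart of the matter, and the ramification of $l$ in $K'$ genuinely changes the decomposition/Frobenius analysis that drives the proofs over $K$, so it cannot be waved through. The orbit count producing the multiples $\tfrac{3}{2}$, $3$, $\tfrac{3}{4}$ is likewise only gestured at, and attributing the distinction between $l\equiv 1$ and $l\equiv 5 \pmod 8$ solely to the splitting of $2$ in $K'$ is too quick: in the proved counts for $N_1$ and $N_3$ the analogous factor $3-(\tfrac{2}{l})$ comes from a finer computation than the splitting type of $2$, and a factor of $2$ between the two split/inert cases needs an actual argument. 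Finally, your explanation of the difference $3$ between the subtracted constants in cases A and B is a heuristic; the bookkeeping in Section 3 (the computation of $L^{(7*)}(p)$ for $p\equiv 6 \pmod 7$) shows exactly how that $3$ must interact with the split exceptional cubics $f_7(x,-1)$ and $f_7(x,27)$, and a proof would have to verify that no $B(a,b)\equiv 0$ quadratic is gained or lost there. In short: a reasonable research plan aligned with the paper's stated intentions, but not a proof, and the paper offers none to measure it against.
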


The discussion in Section 3 will show that the quadratic polynomials $x^2+ax+b$ in Conjecture \ref{conj:3} are factors of the polynomial $f_7(x,t)$ in Conjecture \ref{conj:2}, for certain values of $t = j_7^* \in \mathbb{F}_l$.  Hence, Conjectures \ref{conj:2} and \ref{conj:3} are analogous to Theorem 1.1 in \cite{mor2}.

\section{Counting linear factors of $ss_p^{(7*)}(X)$}

In this section we show that Conjectures \ref{conj:1}-\ref{conj:3} imply Nakaya's Conjecture 5 for $N=7$. \smallskip

We shall use the parametrization
\begin{align}
\notag X =& \frac{(d^2-d+1)^3(d^6-11d^5+30d^4-15d^3-10d^2+5d+1)^3}{d^7(d-1)^7(d^3-8d^2+5d+1)},\\
\label{eqn:0} Y = & \frac{(d^2-d+1)^3}{d(d-1)(d^3-8d^2+5d+1)}, \ \ \ (N=7),
\end{align}
of the curve
\begin{align*}
R_7(X,Y) &= X^2-XY(Y^2-21Y+8)(Y^4-42Y^3+454Y^2-1008Y-1280)\\
& \ \ +Y^2(Y^2+224Y+448)^3.
\end{align*}
The roots of $ss_p^{(7*)}(X)$ over $\mathbb{F}_p$ are the values $Y = j_7^*$ for which $X=j$ is a supersingular $j$-invariant.  These are in turn the values of $X$ for which $d$ is a root of the Hasse invariant $\hat H_{7,p}(d)$ of the curve $E_7$. \medskip

It is clear that $j_7^* \in \mathbb{F}_p$ whenever $d \in \mathbb{F}_p$.  On the other hand, $j_7^* \in \mathbb{F}_p$ if and only if the supersingular parameter $d$ is a root of the polynomial
\begin{align*}
f_7(x,t)=& \ (x^2-x+1)^3-t x(x-1)(x^3-8x^2+5x+1)\\
= & \ x^6+(-3-t) x^5+(6+9 t) x^4+(-7-13 t) x^3+(6+4 t) x^2\\
 & \ +(-3+t) x +1,
\end{align*}
where $t=j_7^*$.  The irreducible sextics of this form are counted in Conjecture \ref{conj:2}.  However, this polynomial can also factor into a product of linear, quadratic or cubic factors.  By \cite[Thm. 5.3]{mor1}, all irreducible cubic factors of $\hat H_{7,p}(x)$ have the form
$$g(x,a) = x^3+ax^2-(a+3)x+1.$$
By the calculation
$$\textrm{Res}_x(g(x,a),f_7(x,t))=((a+8)t+a^2 + 3a + 9)^3$$
it is clear that $j_7^*= t = -\frac{a^2+3a+9}{a+8} \in \mathbb{F}_p$ whenever $g(x,a) \in \mathbb{F}_p[x]$ is an irreducible cubic dividing $\hat H_{7,p}(x)$; and $g(x,a) \mid f_7(x,t)$ for this value of $t$.  (For this note that $a \neq -8$ (mod $p$), since the corresponding cubic $x^3-8x^2+5x+1$ is a factor of $\Delta_7(x)=x^7(x-1)^7(x^3-8x^2+5x+1)$.  Hence, this cubic does not appear in the factorization of $\hat H_{7,p}(x)$.)  Thus, all irreducible cubic factors of $\hat H_{7,p}(x)$ divide a unique polynomial $f_7(x,t)$, with $t \in \mathbb{F}_p$.\medskip

Now consider irreducible quadratic factors.

\newtheorem{lem}{Lemma}

\begin{lem}
An irreducible quadratic factor $x^2+ax+b$ of $\hat H_{7,p}(x)$ divides $f_7(x,t)$ for some value of $t=j_7^* \in \mathbb{F}_p$ if and only if $B(a,b) \equiv 0$ (mod $p$), where $B(x,y)$ is defined in Conjecture \ref{conj:3}.
\label{lem:0}
\end{lem}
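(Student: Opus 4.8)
The plan is to reduce the divisibility condition to a resultant in $t$ and then to identify the relevant factor of that resultant with $B$. Write $f_7(x,t) = P(x) - tQ(x)$ with $P(x) = (x^2-x+1)^3$ and $Q(x) = x(x-1)(x^3-8x^2+5x+1)$. Dividing $f_7(x,t)$ by $q(x) = x^2+ax+b$ and reducing the powers $x^k$ modulo $q$ via $x^2 \equiv -ax-b$, I obtain a remainder $r_1(t)\,x + r_0(t)$ in which, since $f_7$ is linear in $t$, each $r_i(t) = r_i^{(1)}(a,b)\,t + r_i^{(0)}(a,b)$ is linear in $t$; here the constant ($t$-free) parts come from $P \bmod q$ and the $t$-parts from $-(Q \bmod q)$. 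For $t \in \mathbb{F}_p$ one has $q \mid f_7(x,t)$ over $\mathbb{F}_p$ exactly when $r_1(t) = r_0(t) = 0$, so $q$ divides some $f_7(x,t)$ with $t \in \mathbb{F}_p$ precisely when the two linear polynomials $r_1,r_0$ share a root $t$. I would record this as the vanishing of the Sylvester determinant $D(a,b) := r_1^{(1)}r_0^{(0)} - r_1^{(0)}r_0^{(1)}$, noting that any such common root lies automatically in $\mathbb{F}_p$ because the $r_i^{(j)}$ do.

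The central computation is then to evaluate and factor $D$. Carrying out the elimination I expect to find the clean factorization
$$D(a,b) = -\big[(b+1)a + b^2+3b+1\big]\,B(a,b),$$
which displays $B$ together with a single extraneous factor $E(a,b) := (b+1)a + b^2+3b+1$; I would confirm this identity by dividing $D$ by $B$ as polynomials in $a$ (or by a couple of numerical checks). It then remains to show that $E$ cannot vanish for an irreducible quadratic, so that $D = 0 \Leftrightarrow B \equiv 0$ in the case at hand. The key observation is that on the curve $E = 0$, substituting $a = -(b^2+3b+1)/(b+1)$ gives
$$a^2 - 4b = \frac{(b^2+3b+1)^2 - 4b(b+1)^2}{(b+1)^2} = \left(\frac{b^2+b+1}{b+1}\right)^2,$$
a perfect square in $\mathbb{F}_p$; hence every $q = x^2+ax+b$ with $E(a,b) = 0$ has square discriminant and is reducible. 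Consequently $E(a,b) \neq 0$ for every irreducible $q$, and the factorization yields $q \mid f_7(x,t)$ for some $t$ if and only if $B(a,b) \equiv 0$.

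One subtlety I would treat carefully is the passage from ``$D = 0$'' to ``$r_1,r_0$ have a genuine common root'': this can fail only if both leading ($t$-)coefficients $r_1^{(1)}, r_0^{(1)}$ vanish, i.e. when $q \mid Q$. For irreducible $q$ this would force $q \mid (x^3-8x^2+5x+1)$, so that $C(x) = x^3-8x^2+5x+1$ would factor modulo $p$ as (irreducible quadratic)$\times$(linear). But $\mathrm{disc}(C) = 2401 = 7^4$ is a square, so the Galois group of $C$ is cyclic of order $3$, and modulo every prime $p \neq 7$ the cubic $C$ is either irreducible or a product of three linear factors, never quadratic times linear. Thus $q \nmid Q$ for irreducible $q$, at least one leading coefficient is nonzero, and the determinant genuinely detects a common root, which as noted lies in $\mathbb{F}_p$.

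I expect the main obstacle to be the bookkeeping in the resultant computation and, more conceptually, the recognition and disposal of the extraneous factor $E$: the discriminant identity above is exactly what makes the ``only if'' direction go through cleanly, while the square discriminant of $C$ is what rules out the degenerate case $q \mid Q$ in the ``if'' direction. Granting the factorization $D = -E\,B$, the forward implication ($q \mid f_7 \Rightarrow B \equiv 0$) is immediate from $E \neq 0$, and the reverse implication uses the resultant/common-root equivalence together with $q \nmid Q$ to produce the required value $t = j_7^* \in \mathbb{F}_p$.
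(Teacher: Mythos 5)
Your proof is correct, and although it follows the same overall architecture as the paper's --- eliminate $t$, factor the eliminant as (extraneous factor)$\,\times\, B(a,b)$, then dispose of the extraneous factor --- the execution of the two critical steps is genuinely different and in one place markedly simpler. The paper computes $\mathrm{Res}_x(x^2+ax+b,\,f_7(x,t))$ as an explicit quadratic in $t$ and tests whether its discriminant $(a^2-4b)\,(ab+b^2+a+3b+1)^2\,B(a,b)^2$ is a square, using that $a^2-4b$ is a non-square; your $2\times 2$ determinant $D(a,b)$ built from the linear remainder $r_1(t)x+r_0(t)$ is an equivalent but lighter eliminant, and your expected factorization $D=\pm E\,B$ with $E(a,b)=(b+1)a+b^2+3b+1$ is confirmed against the paper's computation by the identity $\mathrm{disc}_t\bigl(b\,r_1^2-a\,r_1r_0+r_0^2\bigr)=(a^2-4b)\,D(a,b)^2$. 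The real divergence is in ruling out $E(a,b)=0$: the paper devotes about a page to this, arguing from $f_7(-1/b,t)=0$ and the order-three action $x\mapsto 1/(1-x)$ on the roots of $f_7$, whereas your observation that $E(a,b)=0$ forces $a^2-4b=\bigl((b^2+b+1)/(b+1)\bigr)^2$ (the algebra checks out, and $E(a,-1)=-1$ excludes $b=-1$) contradicts irreducibility immediately --- a genuine simplification of the published argument. Your handling of the degenerate case is also different but sound: you exclude $q\mid Q$ because $x^3-8x^2+5x+1$ has square discriminant $7^4$ and cyclic Galois group, hence never has an irreducible quadratic factor mod $p$, while the paper instead factors the leading coefficient $-b(a+b+1)C(a,b)$ over $\mathbb{Q}(\zeta_7)^+$ and again invokes a square-discriminant argument. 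The only omission is the closing remark that the value of $t$ you produce really is a supersingular $j_7^*$: for a root $d$ of the quadratic, $t=(d^2-d+1)^3/\bigl(d(d-1)(d^3-8d^2+5d+1)\bigr)$ and $j_7(d)$ is supersingular, so $t$ is a supersingular invariant for $\Gamma_0^*(7)$ by the parametrization (\ref{eqn:0}); you should add this one line to match the statement's ``$t=j_7^*$''.
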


\begin{proof}
We compute the resultant
\begin{align*}
&\textrm{Res}_x(x^2+ax+b,f_7(x,t))=\\
&-b(b + 1 + a)\big\{a^3 + (-5b + 8)a^2 + (-8b^2 - 43b + 5)a- b^3 - 54b^2 - 41b - 1\big\}t^2\\
&+\big\{(-b + 1)a^5 + (4b^2 + 9)a^4 + (13b^3 + 23b^2 + 16b + 13)a^3 + (9b^4 + 30b^3 \\
&+ 78b^2 - 4b + 4)a^2+ (b^5 - 6b^4 + 82b^3 + 48b^2 - 33b - 1)a\\
& - 12b^5 + 24b^4 + 26b^3 + 2b^2 - 14b\big\}t\\
&+(a^2 + ab + b^2 + a - b + 1)^3.
\end{align*}
Assuming the coefficient of $t^2$ is not zero, this quadratic has a root $t \in \mathbb{F}_p$ if and only if its discriminant
\begin{align*}
D=&(a^2 - 4b)(ab + b^2 + a + 3b + 1)^2\\
& \times (a^3 - 5a^2 b - 8ab^2 - b^3 + 8a^2 + 6ab - 5b^2 + 5a + 8b - 1)^2
\end{align*}
is a square in $\mathbb{F}_p$.  Since the quadratic is irreducible, the factor $a^2-4b$ is not a square.  It follows that $D \in \mathbb{F}_p$ is a square if and only if one of the squared factors is zero.  In that case there is a unique root $t \in \mathbb{F}_p$.  The second squared factor is exactly $B(a,b)^2$. \medskip

We show now that in the first squared factor, $F(a,b)=a(b+1)+b^2+3b+1$ is never $0$ (mod $p$) for quadratic factors of $\hat H_{7,p}(x)$.  Assume it is zero, for some quadratic factor.  For the factor $x^2-x+1$, $F(-1,1)=3$, so by Theorem \ref{thm:1} it suffices to assume $p \equiv 1, 6$ (mod $7$).  Also, $F(a,-1)=-1$, so we can assume $b \neq -1$ and $a=-\frac{b^2+3b+1}{b+1}$.  The resultant
\begin{align*}
&\textrm{Res}_x\left(x^2-\frac{b^2+3b+1}{b+1} x+b, f_7(x,t)\right)\\
& = \frac{(b^6 + (3 - t)b^5 + (6 + 4t)b^4 + (7 + 13t)b^3 + (6 + 9t)b^2 + (t + 3)b+1)^2}{(b+1)^6}\\
& = \frac{(b^6 f_7(-1/b,t))^2}{(b+1)^6}
\end{align*}
must be zero, where $t$ is a root of the previous resultant above.  This implies that $f_7(x,t)=0$ has a root $x=-1/b \in \mathbb{F}_p$.  On the other hand, setting $\phi(x)=\frac{-1}{x-1}$, we have
$$(x-1)^6f_7(\phi(x),t)=f_7(x,t),$$
so the roots of $f_7(x,t)$ are invariant under $\phi(x)$, a linear fractional map of order $3$.  Since $x =-1/b \neq 1$ and the fixed points of $\phi$ are primitive sixth roots of unity, this implies either that $b$ is a primitive cube root of unity or that $f_7(x,t)$ has at least three linear factors over $\mathbb{F}_p$.  The first possibility implies that $x = -1/b$ is a root of the irreducible quadratic $x^2+ax+b$, because
$$\frac{1}{b^2}-\frac{a}{b}+b = \frac{1}{b^2}+\frac{b^2+3b+1}{b(b+1)}+b = \frac{(b^2+b+1)^2}{b^2(b+1)}.$$
In the second case $q(x)=x^2+ax+b$ is a factor of $f_7(x,t)$; and its transform
$$\frac{1}{b}(x-1)^2q(\phi(x)) = x^2 + \frac{-2b - a}{b}x + \frac{b + 1 + a}{b}$$
would also be an irreducible factor of $f_7(x,t)$ distinct from $x^2+ax+b$.  Here, $b \neq \frac{b + 1 + a}{b}$, because $b^2-b-1+\frac{(b^2+3b+1)}{b+1} = \frac{b(b^2+b+1)}{b+1} \neq 0$.  But then $f(x,t)$ would have three linear factors and two irreducible quadratic factors, which is impossible. \medskip

This shows that, as long as the leading coefficient is nonzero, namely
$$-b(b + 1 + a)\big\{a^3 + (-5b + 8)a^2 + (-8b^2 - 43b + 5)a- b^3 - 54b^2 - 41b - 1\big\} \not \equiv 0 \ (\textrm{mod} \ p),$$
then an irreducible quadratic factor $x^2+ax+b$ of $\hat H_{7,p}(x)$ divides $f_7(x,t)$ for some value of $t \in \mathbb{F}_p$ if and only if $B(a,b) \equiv 0$ (mod $p$).  It is clear that $b(b+1+a) \not \equiv 0$ (mod $p$) (otherwise the quadratic is reducible), so it suffices to show that
$$C(a,b)=a^3 + (-5b + 8)a^2 + (-8b^2 - 43b + 5)a- b^3 - 54b^2 - 41b - 1$$
is never $0$ (mod $p$).  Note that the polynomial $C(a,b)$ factors over the field $\mathbb{Q}(r)$, where $r^3-8r^2+5r+1=0$:
\begin{align*}
C(a,b)=& \ ((r - 1)b + r^2 - 7r - 2 - a)((r^2 - 7r - 1)b + r^2 - 8r + 6 + a)\\
&\  \times ((r^2 - 8r + 5)b - r - a).
\end{align*}
Thus, $C(a,b) \equiv 0 \ (\textrm{mod} \ p)$ is equivalent to
$$a \equiv (r-1)b-r^\sigma \ \ (\textrm{mod} \ \mathfrak{p}),$$
for some root $r$ of $x^3-8x^2+5x+1=0$, where $\sigma \in \textrm{Gal}(\mathbb{Q}(r)/\mathbb{Q})$ satisfies
$$r^\sigma=-r^2+7r+2, \ \ r^{\sigma^2} = r^2 - 8r + 6$$
and $\mathfrak{p}$ is a first degree prime ideal factor of $p$ in the real field $\mathbb{Q}(r)=\mathbb{Q}(\zeta_7)^+$.  (Recall that primes $p \equiv 1, 6$ (mod $7$) split completely in $\mathbb{Q}(r)$.) But
$$\textrm{disc}(x^2+((r-1)b-r^\sigma)x+b)=((r-1)b-r^\sigma)^2-4b=(r-1)^2(b + 8r^2 - 57r - 10)^2$$
is then a square in $\mathbb{F}_p$, contrary to the assumption that $x^2+ax+b$ is irreducible over $\mathbb{F}_p$. \medskip

Finally, if a factor $x^2+ax+b$ of $\hat H_{7,p}(x)$ divides
$$f_7(x,t) = (x^2-x+1)^3-tx(x-1)(x^3-8x^2+5x+1)$$
 for $t \in \mathbb{F}_p$, then for a root $d$ of $x^2+ax+b$ it follows that
 $$t = \frac{(d^2-d+1)^3}{d(d-1)(d^3-8d^2+5d+1)} = j_7^*$$
 is supersingular for $\Gamma_0^*(7)$, by (\ref{eqn:0}).
\end{proof}

\newtheorem{prop}{Proposition}

The following proposition gives a means for computing the number of supersingular invariants $j_7^* \in \mathbb{F}_p$.

\begin{prop} If $j_7^* \in \mathbb{F}_p$ is supersingular for $\Gamma^*_0(7)$ and distinct from $0, -1, 27$ (mod $p$), then for $t = j_7^*$, the polynomial $f_7(x,t) \in \mathbb{F}_p[x]$ is either irreducible, splits into $6$ linear or $2$ cubic factors over $\mathbb{F}_p$, or splits into $3$ quadratic factors $x^2+ax+b$ over $\mathbb{F}_p$ satisfying $B(a,b) \equiv 0$ (mod $p$).  All of the irreducible factors of $f_7(x,t)$ divide $\hat H_{7,p}(x)$.  Conversely, every linear factor, every quadratic factor $x^2 + ax +b$ with $B(a,b) \equiv 0$ (mod $p$), and every cubic factor of $\hat H_{7,p}(x)$ divides $f_7(x,t)$ (mod $p$), for a unique value of $t = j_7^* \in \mathbb{F}_p$.
\label{prop:1}
\end{prop}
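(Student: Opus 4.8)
The plan is to reduce the factorization of $f_7(x,t)$ to that of two cubics and then pin down the possibilities using the order‑$3$ symmetry together with Theorem \ref{thm:1}. First I would record that $\phi(x)=\frac{-1}{x-1}$ satisfies both $(x-1)^3 g(\phi(x),a)=g(x,a)$ and $(x-1)^6 f_7(\phi(x),t)=f_7(x,t)$, so $g(x,a)$ and $f_7(x,t)$ are each invariant under $\phi$. From the resultant $\textrm{Res}_x(g(x,a),f_7(x,t))=((a+8)t+a^2+3a+9)^3$ already computed, the cubics $g(x,a)$ meeting $f_7(x,t)$ correspond to the roots $a_1,a_2$ of
\[
a^2+(3+t)a+(9+8t)=0, \qquad \textrm{disc}=(t-27)(t+1).
\]
Because $t=j_7^*\neq -1,27$ these roots are distinct, and $a_i\neq -8$ (else $49\equiv 0$), so $g(x,a_i)\neq x^3-8x^2+5x+1$. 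Since $x^2-x+1\nmid g(x,a)$, no root of $g(x,a_i)$ is a fixed point of $\phi$; hence a shared root of $g(x,a_i)$ and $f_7(x,t)$ propagates along its $\phi$‑orbit, giving $g(x,a_i)\mid f_7(x,t)$. As $t\notin\{0,-1,27\}$ forces $f_7(x,t)$ separable, the two $\phi$‑orbits are disjoint and $f_7(x,t)=g(x,a_1)g(x,a_2)$. The pair $a_1,a_2$ lies in $\mathbb{F}_p$ or is conjugate according as $(t-27)(t+1)$ is a square, and a direct computation gives $\textrm{disc}_x\, g(x,a)=(a^2+3a+9)^2$, so each $g(x,a_i)$ with $a_i\in\mathbb{F}_p$ is either irreducible or a product of three linear factors, never admitting an irreducible quadratic factor.

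Next I would prove that every irreducible factor of $f_7(x,t)$ divides $\hat H_{7,p}(x)$. If $d$ is a root of $f_7(x,t)$, then $t\neq 0$ and the coprimality of $x^2-x+1$ with $x(x-1)(x^3-8x^2+5x+1)$ force $d\neq 0,1$ and $d^3-8d^2+5d+1\neq 0$, so $j_7(d)$ is defined and, by (\ref{eqn:0}), is a root $X$ of $R_7(X,t)=0$. The two $X$‑roots of $R_7(X,t)=0$ are the $j$‑invariants of a $7$‑isogenous pair; since $t=j_7^*$ is supersingular at least one is supersingular, hence both are, supersingularity being isogeny‑invariant. Thus $E_7(d)$ is supersingular, $\hat H_{7,p}(d)=0$, and the minimal polynomial of $d$ divides $\hat H_{7,p}(x)$.

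The crux, and the step I expect to be the main obstacle, is excluding mixed factorization types — above all the pattern ``three linear factors times one irreducible cubic'' when $a_1,a_2\in\mathbb{F}_p$, as well as mixtures such as two linear plus two quadratic factors. Here I would invoke Theorem \ref{thm:1}: since every irreducible factor of $f_7(x,t)$ divides $\hat H_{7,p}(x)$, the permissible factor degrees are restricted by $l \bmod 7$. When $l\equiv 6$ no cubic factor is allowed, so each $g(x,a_i)$ — being irreducible‑or‑split by the first paragraph — must split completely, yielding exactly six linear factors, while $a_i\notin\mathbb{F}_p$ (no sextic permitted) forces the three‑quadratic case. When $l\equiv 3,5$ no linear factor is allowed, so each $g(x,a_i)$ with $a_i\in\mathbb{F}_p$ must be irreducible, giving two cubics, while $a_i\notin\mathbb{F}_p$ forces $f_7(x,t)$ irreducible. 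For $l\equiv 1,2,4$ both cubic and linear factors are forbidden, so necessarily $a_i\notin\mathbb{F}_p$, and $f_7(x,t)$ is three quadratics ($l\equiv 1$) or irreducible ($l\equiv 2,4$). Thus the uniform behaviour of $g(x,a_1)$ and $g(x,a_2)$ is forced by Theorem \ref{thm:1} rather than by an ad hoc symmetry; the dichotomy between irreducible and three‑quadratic when $a_i\notin\mathbb{F}_p$ follows because Frobenius commutes with $\phi$ and interchanges the two $\phi$‑orbits, so it acts either as a fixed‑point‑free involution (three quadratics) or as a single $6$‑cycle (irreducible). In every three‑quadratic case the factors are irreducible and divide $f_7(x,t)$ with $t=j_7^*\in\mathbb{F}_p$, so Lemma \ref{lem:0} gives $B(a,b)\equiv 0$.

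Finally, the converse assembles from results in hand. For a cubic factor $g(x,a)$ of $\hat H_{7,p}(x)$ the resultant identity shows $g(x,a)\mid f_7(x,t)$ precisely for $t=-\frac{a^2+3a+9}{a+8}$, a single value (with $a\neq -8$). For a quadratic factor $x^2+ax+b$ satisfying $B(a,b)\equiv 0$, Lemma \ref{lem:0}, together with the nonvanishing leading coefficient of the resultant in $t$ established there, yields a unique $t=j_7^*\in\mathbb{F}_p$. For a linear factor $x-c$ one has $c\neq 0,1$ and $c^3-8c^2+5c+1\neq 0$, so $t=\frac{(c^2-c+1)^3}{c(c-1)(c^3-8c^2+5c+1)}\in\mathbb{F}_p$ is the unique value with $f_7(c,t)=0$, as $f_7(c,t)$ is linear in $t$ with nonzero leading coefficient. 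In each case the associated $j_7$‑value is a supersingular $j$‑invariant, so the corresponding $t$ is supersingular for $\Gamma^*_0(7)$, completing the proof.
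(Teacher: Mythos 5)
Your proposal is correct, and it reaches the same conclusions by a somewhat different organization of the factorization analysis. The paper works with the full order-$6$ group $\mathcal{G}_7=\langle\phi,T_1\rangle\cong S_3$ of linear fractional maps permuting the roots of $f_7(x,t)$, gets the propagation of supersingularity from the fact that $\langle\phi\rangle$ fixes $j_7(x)$ while the $T_i$ carry it to $j_{7,7}(x)$, and then argues case by case (a quadratic factor forces three quadratics via $\phi$; a cubic factor forces a cubic cofactor $g(x,b)$, irreducible by Theorem \ref{thm:1}; a linear factor forces complete splitting). You instead make the index-$2$ structure explicit through a different decomposition: the resolvent quadratic $a^2+(3+t)a+(9+8t)$, with discriminant $(t-27)(t+1)$, yields the exact factorization $f_7(x,t)=g(x,a_1)g(x,a_2)$ --- the specialization of the identity $P(x)=(x^3-zx^2+(z-3)x+1)(x^3-z'x^2+(z'-3)x+1)$ from the end of Section 4 --- and the square discriminant $\mathrm{disc}_x\,g(x,a)=(a^2+3a+9)^2$ shows each rational cubic is irreducible or totally split, while the conjugate case is settled by noting that Frobenius commutes with $\phi$ and swaps the two $\phi$-orbits, hence acts as a fixed-point-free involution (three quadratics) or a $6$-cycle (irreducible). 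Your route derives the list of admissible patterns, and the precise role of the hypothesis $t\neq -1,27$ (namely $a_1\neq a_2$), almost entirely from the resolvent, using Theorem \ref{thm:1} only to select among them; the paper's route avoids the Frobenius bookkeeping but leans more on Theorem \ref{thm:1} and on the maps $T_i$, which are defined over $\mathbb{Q}(r)$ rather than $\mathbb{F}_p$. The remaining ingredients --- Lemma \ref{lem:0} for $B(a,b)\equiv 0$, the resultant $\mathrm{Res}_x(g(x,a),f_7(x,t))=((a+8)t+a^2+3a+9)^3$ for the cubic converse, and isogeny-invariance of supersingularity via $R_7$ --- coincide with the paper's. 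One small point worth making explicit in your cases $l\equiv 2,3,4,5\pmod 7$: the three-quadratic pattern is excluded there because the only quadratic permitted by Theorem \ref{thm:1} is $x^2-x+1$, and $x^2-x+1\nmid f_7(x,t)$ for $t\neq 0$ (its resultant with $f_7(x,t)$ is $t^3\cdot 7^3/\,\cdot$, nonzero), so separability rules out $(x^2-x+1)^3$; likewise a common root of $g(x,a_i)$ with $x^2-x+1$ would force $a_i^2+3a_i+9\equiv 0$, which the resolvent relation forbids when $t\neq 0$.
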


\begin{proof} We have
$$\textrm{disc}(f_7(x,t))=7^4 t^4 (t + 1)^3 (t - 27)^3,$$
so the assumption on $j_7^* = t$ implies that $f_7(x,t)$ has no multiple roots. \medskip

Note that the set of roots of the polynomial $f_7(x,t)$ is invariant under a group $\mathcal{G}_7$ of linear fractional transformations, which is isomorphic to $S_3$:
$$\mathcal{G}_7=\Big\{x, \phi(x), \phi^2(x), \frac{x - r}{(1 - r)x - 1}, \frac{x - r^\sigma}{(1 - r^\sigma)x - 1}, \frac{x - r^{\sigma^2}}{(1 - r^{\sigma^2})x - 1}\Big\}.$$
We have verified this for $\phi(x)$ above.  For the mapping $T(x) = \frac{x - r}{(1 - r)x - 1}$ we have
$$((1-r)x-1)^6 f_7(T(x),t) = \left(\frac{19}{7}r^2 - \frac{15}{7}r - \frac{1}{7}\right)(r+2)^3 f_7(x,t),$$
where $\eta=\frac{1}{7}(19r^2 - 15r - 1)$ is a unit in $\mathbb{Q}(r)$ and $N_\mathbb{Q}(r+2) = 7^2$.  Furthermore, the elements in the subgoup $\langle \phi \rangle$ fix the $j$-invariant
\begin{equation}
j_7(x) = \frac{(x^2-x+1)^3(x^6-11x^5+30x^4-15x^3-10x^2+5x+1)^3}{x^7(x-1)^7(x^3-8x^2+5x+1)},
\label{eqn:1}
\end{equation}
while the ``transposition'' $T(x)$ and its conjugates map $j_7(x)$ to the $j$-invariant
\begin{equation}
j_{7,7}(x)=\frac{(x^6 + 229x^5 + 270x^4 - 1695x^3 + 1430x^2 - 235x + 1)^3 (x^2 - x + 1)^3}{x(x-1)(x^3 - 8x^2 + 5x + 1)^7}
\label{eqn:2}
\end{equation}
of the isogenous curve $E_{7,7}=E_7/\langle (0,0) \rangle$\footnote{The equation of $ E_{7,7} $ is $ Y^2+(1+d-d^2)XY+7(d^2-d^3)Y=X^3 -d (d-1)(7d+6)X^2-6d(d-1)(d^5 -2d^4-7d^3 + 9d^2 -3d+1)X -d(d-1)(d^9-2d^8-34d^7+153d^6-229d^5+199d^4-111d^3+28d^2-7d+1)$.}.  (See \cite[pp. 15-17]{mor3}.)  It follows that if a root $x=d$ of $f_7(x,t)$ is a supersingular invariant for $E_7$, then so are all the roots of $f_7(x,t)$, since they are all images of $d$ by the mappings in $\mathcal{G}_7$ and $E_7(d)$ being supersingular implies that $E_{7,7}(d)$ is also. \medskip

It is clear that if $f_7(x,t)$ has a quadratic factor over $\mathbb{F}_p$ and does not have multiple roots, then it splits into irreducible quadratics, because of the action of the map $\phi(x)$.  The same goes when $f_7(x,t)$ has an irreducible cubic factor.  This holds because cubics of the form $g(x,a)=x^3+ax^2-(a+3)x+1$ are invariant under the substitution $x \rightarrow \phi(x)$, so the cofactor of $g(x,a)$ is also a cubic invariant under the same substitution and has the form $g(x,b)$, for some $b$.  Then $g(x,b)$ must also be irreducible, because in this case $p \equiv 3,5$ (mod $7$), when $\hat H_{7,p}(x)$ has no linear factors.  This implies that if $f_7(x, t)$ has a linear factor, it must split completely, since $p \equiv 6$ (mod $7$) and a quadratic factor would yield two additional quadratic factors under the action of $\phi(x)$. Alternatively, the mappings in $\mathcal{G}_7$ are then mappings in $GL(\mathbb{F}_p)$, since $x^3-8x^2+5x+1$ splits for primes $p \equiv 6$ (mod $7$).  For this reason, every linear factor $x-a$ of $\hat H_{7,p}(x)$ divides some polynomial $f_7(x,t)$ with $t = j_7^*$; namely, the polynomial
$$f_7(x,t) = \prod_{\sigma \in \mathcal{G}_7}{(x-\sigma(a))}.$$
The same assertion for quadratic and cubic factors follows from Lemma \ref{lem:0} and the previous discussion.
\end{proof}

We turn now to the proof that Conjectures \ref{conj:1}-\ref{conj:3} imply Nakaya's Conjecture 5 for $N=7$. \medskip

To count linear factors of $ss_p^{(7*)}(x)$ we have to account for multiple factors.  Once we have done that, we can appeal to Conjectures \ref{conj:1}-\ref{conj:3}.  So we compute
\begin{align}
\notag f_7(x,0) & = (x^2 - x + 1)^3,\\
\label{eqn:3} f_7(x,-1) & = (x^3 - x^2 - 2x + 1)^2,\\
\notag f_7(x,27) & = (x^3 - 15x^2 + 12x + 1)^2.
\end{align}
These values of $x$ correspond to $j$-invariants $j = 0, -15^3, 255^3$ and discriminants $D = -3, -7, -28$, respectively, and will occur as roots of $\hat H_{7,p}(x)$ for $p>7$ or $p=5$ if and only if $\left(\frac{-3}{p}\right)=-1$ or $\left(\frac{-7}{p}\right)=\left(\frac{p}{7}\right)=-1$. \medskip

Suppose now that $p \equiv 2, 4$ (mod $7$).  Then the discriminants $-7, -28$ are not supersingular for $p$, and in addition to $x^2-x+1$ we only have to count sextic factors of $\hat H_{7,p}(x)$.  By Conjecture \ref{conj:2}A, this number is $N_6$, and these correspond $1-1$ to roots $j_7^* \neq 0$ in $\mathbb{F}_p$ of $ss_p^{(7*)}(X)$.  Since we must add $\frac{1}{2}\left(1-\left(\frac{-3}{p}\right) \right)$ to $N_6$ to account for the discriminant $-3$, the total number of such roots is
\begin{align*}
L^{(7*)}(p)& =
\begin{cases}
 \frac{1}{2}h(-7p), \ \ p \equiv 1 \ (\textrm{mod} \ 8);\\
 h(-7p), \ \ \ p \equiv 5 \ (\textrm{mod} \ 8);\\
 \frac{1}{4}h(-7p), \ \ p \equiv 3 \ (\textrm{mod} \ 4).
\end{cases}
\end{align*}
These counts coincide with the values given by the formula
\begin{align*}
L^{(7*)}(p)=& \ \frac{1}{2}\left(1+\left(\frac{-p}{7}\right) \right) L(p)\\
& \ +\frac{1}{8}\bigg\{2+\left(1-\left(\frac{-1}{7p}\right) \right) \left(2+\left(\frac{-2}{7p}\right) \right)\bigg\} h(-7p)
\end{align*}
conjectured by Nakaya.  \medskip

Assume next that $p \equiv 3, 5$ (mod $7$).  In this case we must add the contributions of $x^2-x+1$ and sextic factors (Conjecture \ref{conj:2}B) to the contribution of cubic factors (Conjecture \ref{conj:1}B).  The two cubic factors of $f_7(x,t)$ listed in (\ref{eqn:3}) are irreducible and distinct in this case (their roots belong to $\mathbb{Q}(r)$ and they split mod $p$ if and only if $p \equiv 1, 6$ mod $7$), so each of these cubics corresponds to one value of $j_7^* \in \mathbb{F}_p$. This gives
\begin{align*}
L^{(7*)}(p)& =\frac{1}{2}(N_3-2) + 2 + N_6+\frac{1}{2}\left(1-\left(\frac{-3}{p}\right) \right)\\
& =\frac{1}{2} \cdot 2L(p)+N_6 + \frac{1}{2}\left(3-\left(\frac{-3}{p}\right) \right)\\
& = \begin{cases}
 L(p) + \frac{1}{2}h(-7p), \ \ p \equiv 1 \ (\textrm{mod} \ 8);\\
 L(p) + h(-7p), \ \ \ p \equiv 5 \ (\textrm{mod} \ 8);\\
 L(p)+ \frac{1}{4}h(-7p), \ \ p \equiv 3 \ (\textrm{mod} \ 4).
\end{cases}
\end{align*}

For $p \equiv 1$ (mod $7$), we only have to count quadratic factors.  Each value of $j_7^*$ in $\mathbb{F}_p$ will then correspond to three quadratics in Conjecture \ref{conj:3}A satisfying $B(a,b) \equiv 0$ (mod $p$), with the exception of $j_7^*=0$, which corresponds to the single quadratic $x^2-x+1$, when $\left(\frac{-3}{p}\right)=-1$.  Hence, we must subtract $1$ from the count $N_2$ in Conjecture \ref{conj:3}A in case $j=0$ is supersingular.  In all, we have
\begin{align*}
L^{(7*)}(p)& = \frac{1}{3}\bigg\{N_2-\frac{1}{2}\left(1-\left(\frac{-3}{p}\right) \right)\bigg\}+\frac{1}{2}\left(1-\left(\frac{-3}{p}\right) \right)\\
&=\begin{cases}
  \frac{1}{2}h(-7p), \ \ p \equiv 1 \ (\textrm{mod} \ 8);\\
 h(-7p), \ \ \ p \equiv 5 \ (\textrm{mod} \ 8);\\
 \frac{1}{4}h(-7p), \ \ p \equiv 3 \ (\textrm{mod} \ 4).
 \end{cases}
\end{align*}

Finally, consider a prime $p \equiv 6$ (mod $7$).  In this case the two exceptional cubics above split into linear factors mod $p$, so we must subtract $6$ from the count $N_1$ in Conjecture \ref{conj:1}A.  Each of these cubics contributes one value $j_7^* \in \mathbb{F}_p$, while six linear factors yield a single $j_7^*$.  Adding the number of quadratic factors from Conjecture \ref{conj:3}B gives that
\begin{align*}
& L^{(7*)}(p) = \frac{1}{6}(N_1-6)+2+\frac{1}{3}\bigg\{N_2-\frac{1}{2}\left(1-\left(\frac{-3}{p}\right) \right)\bigg\}+\frac{1}{2}\left(1-\left(\frac{-3}{p}\right) \right)\\
& = L(p) + 1 + \frac{1}{3}\bigg\{3a_p h(-7p)-3-\frac{3}{2}\left(1-\left(\frac{-3}{p}\right) \right)\bigg\}+\frac{1}{2}\left(1-\left(\frac{-3}{p}\right) \right);
\end{align*}
where
$$a_p=\frac{1}{8}\bigg\{2+\left(1-\left(\frac{-1}{7p}\right) \right) \left(2+\left(\frac{-2}{7p}\right) \right)\bigg\}.$$
Thus, we find that
$$L^{(7*)}(p) = L(p)+a_ph(-7p), \ \ p \equiv 6 \ (\textrm{mod} \ 7).$$
This completes the proof that Conjectures \ref{conj:1}-\ref{conj:3} imply Nakaya's Conjecture 5 for $N=7$.

\section{The modular function fields.}

We consider the sequence of congruence groups
$$\Gamma(7) \subset \Gamma_1(7) \subset \Gamma_0(7) \subset \Gamma_0^*(7)$$
and the corresponding sequence of modular function fields over $k=\mathbb{Q}(\zeta_7)$, where $\zeta_7$ is a primitive $7$-th root of unity:
$$\textsf{K}_{\Gamma(7)} \supset \textsf{K}_{\Gamma_1(7)} \supset \textsf{K}_{\Gamma_0(7)} \supset \textsf{K}_{\Gamma_0^*(7)}.$$
We have the following field degrees:
$$[\textsf{K}_{\Gamma(7)}: \textsf{K}_{\Gamma_1(7)}] = 7, [\textsf{K}_{\Gamma_1(7)}: \textsf{K}_{\Gamma_0(7)}]=3, [\textsf{K}_{\Gamma_0(7)}: \textsf{K}_{\Gamma_0^*(7)}]=2.$$
From Duke \cite{du}, we have
$$\textsf{K}_{\Gamma_1(7)}=k(h(\tau)), \ \ \textsf{K}_{\Gamma(7)}=k(h(\tau), s(\tau)),$$
where $h(\tau)$ and $s(\tau)$ are given by the infinite products
\begin{align*}
h(\tau) & = q^{-1} \prod_{n \ge 1}{\frac{(1-q^{7n-3})(1-q^{7n-4})(1-q^{7n-2})^2(1-q^{7n-5})^2}{(1-q^{7n-1})^3(1-q^{7n-6})^3}}\\
& = q^{-1} + 3 + 4q + 3q^2 - 5q^4 - 7q^5 - 2q^6 + 8q^7 + 16q^8 + 12q^9 + \cdots,
\end{align*}
and
\begin{align*}
s(\tau) & = q^{-3/7} \prod_{n \ge 1}{\frac{(1-q^{7n-3})(1-q^{7n-4})}{(1-q^{7n-1})(1-q^{7n-6})}}\\
& =q^{-3/7}(1+q+q^2-q^4-q^5+q^7+2q^8+q^9-q^{10}-3q^{11} +\cdots);
\end{align*}
and $(h,s) = (h(\tau),s(\tau))$ parametrizes the Klein curve
$$X(7): \ s^7=h(h-1)^2.$$
We shall also need the identity
$$\left(\frac{\eta(\tau)}{\eta(7\tau)}\right)^4=\frac{h^3-8h^2+5h+1}{h(h-1)}, \ \ h=h(\tau),$$
where $\eta(\tau)$ is the Dedekind $\eta$-function.  A proof of this identity is given in Theorem \ref{thm:10} in the Appendix.  Also see \cite{mor5}.  From Schertz \cite[pp. 46, 51]{sch} we know that
$$\left(\frac{\eta(\tau)}{\eta(7\tau)}\right)^4  = \frac{1}{q} - 4 + 2q + 8q^2 - 5q^3 - 4q^4 - 10q^5 + 12q^6 - 7q^7 + 8q^8 + 46q^9 + \cdots$$
is a Hauptmodul for $\Gamma_0(7)$.  (Also see \cite[(4.18), p. 88]{elk}, where this function is denoted by $j_7$.  The above identity is (4.24) on p. 89 of this paper, where Elkies uses $d$ for our $h$.)  Setting
$$z = z(\tau) = \left(\frac{\eta(\tau)}{\eta(7\tau)}\right)^4+8$$
we therefore have that $\textsf{K}_{\Gamma_0(7)}=k(z(\tau))$.  The field $k(h(\tau))$ is a cyclic cubic extension of $k(z(\tau))$, with Galois group 
$$G(k(h)/k(z)) = \{1, \phi, \phi^2\}, \ \ \phi: h \rightarrow \phi(h) = \frac {1}{1-h},$$
and we have
$$z = \frac{h^3-8h^2+5h+1}{h(h-1)} + 8 = \frac{h^3-3h+1}{h(h-1)}=h+\phi(h)+\phi^2(h) = \textsf{Tr}_{k(h)/k(z)}(h).$$

From \cite{du} we have that
\begin{equation*}
j(\tau) = j_7(d(\tau)), \ \ \textrm{with} \ \ d(\tau) = h\left(\frac{-1}{7\tau}\right),
\end{equation*}
is the $j$-invariant of $E_7(d(\tau))$, where $j_7(x)$ is the rational function in (\ref{eqn:1}).  Hence,
\begin{equation}
j(7\tau) = j\left(\frac{-1}{7\tau}\right) = j_7(h(\tau)).
\label{eqn:4}
\end{equation}
On the other hand, we also have from \cite[p. 156]{du}, \cite[Thm. 3]{mor5} that
\begin{equation}
d(\tau) = h\left(\frac{-1}{7\tau}\right) = \frac{h(\tau)-r_1}{(1-r_1)h(\tau)-1}=T_1(h(\tau)),
\end{equation}
where 
$$T_1(x) = \frac{x-r_1}{(1-r_1)x-1}$$
and $r_1 = \frac{\sin(\pi/7)\sin^2(3\pi/7)}{\sin^3(2\pi/7)} = 3(\zeta^5+\zeta^2) + \zeta^3 + \zeta^4 + 4$ (with $\zeta = e^{2 \pi i/7}$) is a root of
$$p(x)=x^3-8x^2+5x+1=0.$$
If we set
$$T_i(x) = \frac{x-r_i}{(1-r_i)x-1}, \ \ 1 \le i \le 3,$$
where $r_2, r_3$ are the other two roots of $p(x)$, it can be checked that the transformations $T_i$ have order $2$, and that
$$j_7(T_i(x)) = j_{7,7}(x), \ \ 1 \le i \le 3,$$
where $j_{7,7}(x)$ is defined by (\ref{eqn:2}).  Using $d(\tau) = T_1(h(\tau))$, we see that
\begin{equation}
j(\tau) = j_7(d(\tau)) =  j_7(T_1(h(\tau))) = j_{7,7}(h(\tau)).
\label{eqn:4.5}
\end{equation}
This gives that
\begin{equation}
j(\tau) = \frac{(h^2 - h + 1)^3 (h^6 + 229h^5 + 270h^4 - 1695h^3 + 1430h^2 - 235h + 1)^3}{h(h-1)(h^3 - 8h^2 + 5h + 1)^7}.
\label{eqn:5}
\end{equation}
It is well-known \cite[p. 206]{co} that
$$K_{\Gamma_0(7)}=k(j(\tau),j(7\tau)) = k(z(\tau)),$$
so that $j(\tau)$ and $j(7\tau)$ are rational functions of $z(\tau)$.  Using (\ref{eqn:4}), (\ref{eqn:5}), and $z = \frac{h^3-3h+1}{h(h-1)}$
we find easily that
\begin{align}
\label{eqn:6} j(7\tau) & = \frac{(z^2-3z+9)(z^2-11z+25)^3}{z-8},\\
\label{eqn:7} j(\tau) & = \frac{(z^2-3z+9)(z^2+229z+505)^3}{(z-8)^7}, \ \ z = z(\tau).
\end{align}
Also, note that the group $\mathcal{G}_7=\langle \phi, T_1 \rangle \cong S_3$ acting on $k(h)$ has the fixed field
$$\textsf{K}_{\mathcal{G}_7}=k\left(\frac{z^2-15}{z-8}\right),$$
since $z$ maps to $A(z) = \frac{8z-15}{z-8}$ under the mapping $h \rightarrow T_1(h)$, and $z+A(z)=\frac{z^2-15}{z-8}$.  Note that
\begin{align*}
z + A(z) = & \ \left(\frac{\eta(\tau)}{\eta(7\tau)}\right)^4+8 + \left\{8\left(\left(\frac{\eta(\tau)}{\eta(7\tau)}\right)^4+8\right)-15\right\}\left(\frac{\eta(7\tau)}{\eta(\tau)}\right)^4\\
= & \ \left(\frac{\eta(\tau)}{\eta(7\tau)}\right)^4+16+49\left(\frac{\eta(7\tau)}{\eta(\tau)}\right)^4\\
= & \ j_7^*(\tau)+3.
\end{align*}
Thus, $\textsf{K}_{\mathcal{G}_7}=k(j_7^*(\tau))= \textsf{K}_{\Gamma_0^*(7)}$.  It is clear that $j(\tau)$ and $j(7\tau)=j\left(\frac{-1}{7\tau}\right)$ are conjugate over $\textsf{K}_{\mathcal{G}_7}$ and therefore satisfy a quadratic equation over this field. \medskip

In summary, we have the following.

\begin{thm} The field $\textsf{K}_{\Gamma_1(7)} = k(h(\tau))$ is a normal extension of $\textsf{K}_{\mathcal{G}_7}=\textsf{K}_{\Gamma_0^*(7)} = k(j_7^*(\tau))$ with Galois group $\mathcal{G}_7 \cong S_3 \cong D_3$.
\label{thm:2}
\end{thm}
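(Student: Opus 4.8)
The plan is to recognize Theorem \ref{thm:2} as an instance of Artin's theorem: a finite group of field automorphisms always yields a Galois extension over its fixed field, with Galois group equal to the given group. Essentially all the ingredients have been assembled in the discussion preceding the statement, so the proof reduces to assembling them and supplying a degree count. First I would check that $\phi$ and $T_1$ genuinely define $k$-automorphisms of $\textsf{K}_{\Gamma_1(7)}=k(h)$. Since $h=h(\tau)$ is a Hauptmodul for $\Gamma_1(7)$ over $k$, the field $k(h)$ is a rational function field of transcendence degree one over $k$, and every M\"obius substitution $x \mapsto (ax+b)/(cx+d)$ with $ad-bc \neq 0$ and coefficients in $k$ induces an automorphism fixing $k$. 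Both $\phi(x)=1/(1-x)$ and $T_1(x)=(x-r_1)/((1-r_1)x-1)$ have coefficients in $k$, because $r_1 = 3(\zeta^5+\zeta^2)+\zeta^3+\zeta^4+4$ lies in the real subfield $\mathbb{Q}(\zeta_7)^+ \subseteq k$; hence each is a $k$-automorphism of $k(h)$.

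Next I would verify that $\mathcal{G}_7 = \langle \phi, T_1\rangle$ is a faithful group of order $6$ isomorphic to $S_3$. We already know that $\phi^3 = 1$ (it generates the cyclic cubic group $G(k(h)/k(z))$) and that $T_1^2 = 1$. The remaining point is closure: I would confirm by direct M\"obius arithmetic that conjugating $\phi$ by $T_1$ inverts it, $T_1 \phi T_1^{-1} = \phi^2$, so that $\langle \phi, T_1\rangle$ satisfies the dihedral relation and conjugation by $\phi$ cyclically permutes the three involutions $T_1, T_2, T_3$ attached to the three roots $r_1, r_2, r_3$ of $p(x)=x^3-8x^2+5x+1$. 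This exhibits the six transformations listed in Section 3 as a closed set, hence a group $\cong S_3 \cong D_3$; faithfulness is immediate since distinct M\"obius maps act differently on the transcendental element $h$.

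Then I would pin down the fixed field. The preceding computation already shows $k(h)^{\langle\phi\rangle}=k(z)$ via $z = \textsf{Tr}_{k(h)/k(z)}(h)$, that $T_1$ carries $z$ to $A(z)=(8z-15)/(z-8)$, and that the trace $z+A(z)=(z^2-15)/(z-8)=j_7^*(\tau)+3$ is $\mathcal{G}_7$-invariant. Thus $\textsf{K}_{\Gamma_0^*(7)} = k(j_7^*) \subseteq k(h)^{\mathcal{G}_7}$. Finally, Artin's theorem gives that $k(h)/k(h)^{\mathcal{G}_7}$ is Galois with group $\mathcal{G}_7$ and degree $|\mathcal{G}_7|=6$; comparing with the tower degree $[\textsf{K}_{\Gamma_1(7)}:\textsf{K}_{\Gamma_0^*(7)}]=3 \cdot 2 = 6$ forces $k(h)^{\mathcal{G}_7}=\textsf{K}_{\Gamma_0^*(7)}$. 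Since $k$ has characteristic $0$, normal and Galois coincide, so $\textsf{K}_{\Gamma_1(7)}/\textsf{K}_{\Gamma_0^*(7)}$ is normal with Galois group $\mathcal{G}_7 \cong S_3 \cong D_3$, as claimed.

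I expect the only genuine obstacle to be the closure verification in the second step: one must compose the M\"obius maps $\phi$ and $T_1$ and check that the results fall among the six listed transformations, i.e. that conjugation by the involution inverts the $3$-cycle $\phi$. Everything else is either quoted from the field-degree and trace computations of Section 4 or is a one-line application of Artin's theorem together with the degree count.
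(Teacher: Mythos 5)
Your proposal is correct and follows essentially the same route as the paper: the paper's proof is precisely the discussion preceding the theorem, which exhibits $\mathcal{G}_7=\langle\phi,T_1\rangle\cong S_3$ as a group of $k$-automorphisms of the rational function field $k(h)$ and identifies its fixed field as $k(z+A(z))=k(j_7^*)$, the statement then following from Artin's theorem. Your added verifications (the dihedral relation $T_1\phi T_1^{-1}=\phi^2$ via M\"obius arithmetic, and the degree count $3\cdot 2=6$ to force $k(h)^{\mathcal{G}_7}=k(j_7^*)$ rather than a proper subfield) are exactly the details the paper leaves implicit, and both check out.
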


Setting $z'=A(z)$, and using the relations
$$z+z'=j_7^*+3, \ \ z z' = 8j_7^*+9,$$
we find that the minimal polynomial of $h(\tau)$ over $\textsf{K}_{\mathcal{G}_7}$ is the $6$-th degree polynomial
\begin{align*}
P(x)=& \ (x^3-z x^2 + (z-3)x+1)(h^3-z' x^2 +(z'-3)x+1)\\
= & \ x^6-(j_7^*+3)x^5+(9j_7^*+6)x^4-(13j_7^*+7)x^3\\
& +(4j_7^*+6)x^2+(j_7^*-3)x+1\\
= & \ f_7(x,j_7^*),
\end{align*}
where the polynomial $f_7(x,t)$ is defined in Conjecture \ref{conj:2}. \medskip

Set
\begin{align*}
G(x,j) = & (x^2 - x + 1)^3 (x^6 + 229x^5 + 270x^4 - 1695x^3 + 1430x^2 - 235x + 1)^3\\
&\ - jx(x-1)(x^3 - 8x^2 + 5x + 1)^7.
\end{align*}
Using the fact that $f_7(h(\tau),j_7^*(\tau)) = 0$ and $G(h(\tau),j(\tau)) = 0$ from (\ref{eqn:5}), the resultant 
\begin{equation*}
\textrm{Res}_h(G(h,X),f_7(h,Y)) = 7^{42} R_7(X,Y)^3
\end{equation*}
gives the relation
\begin{align*}
R_7(X,Y) &= X^2-XY(Y^2-21Y+8)(Y^4-42Y^3+454Y^2-1008Y-1280)\\
& \ \ +Y^2(Y^2+224Y+448)^3
\end{align*}
satisfied by $X = j(\tau)$ and $Y = j_7^*(\tau)$.

\section{Class fields over $K=\mathbb{Q}(\sqrt{-l})$.}

In this section we only consider primes $l$ satisfying $l \equiv 3, 5, 6$ (mod $7$).

\begin{lem}
\begin{enumerate}[(i)]
\item Let $l>3$ be a prime with $\left(\frac{-l}{7}\right)=+1$ and $d = l$ or $d = 4l$ in the cases $l \equiv 3$ and $l \equiv 1$ (mod $4$), respectively.  If $w=\frac{v+\sqrt{-d}}{2}$ is a primitive algebraic integer in $K = \mathbb{Q}(\sqrt{-l})$ satisfying the condition that $N(w) = \frac{v^2+d}{4} \equiv 0$ (mod $7^2$), then $\left(\frac{\eta(w/7)}{\eta(w)}\right)^4 \in \Sigma$, the Hilbert class field of $K$.

\item If $l \equiv 3$ (mod $4$),$\left(\frac{-l}{7}\right)=+1$ and $w = v + \sqrt{-l}$ satisfies $N(w) = v^2+l \equiv 0$ (mod $7^2$) and $2 \nmid N(w)$, then $\left(\frac{\eta(w/7)}{\eta(w)}\right)^4 \in \Omega_2$, the ring class field of conductor $f = 2$ over $K$.
\end{enumerate}
\label{lem:1}
\end{lem}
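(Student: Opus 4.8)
The plan is to reduce the statement to the classical first main theorem of complex multiplication applied to the two $j$-invariants $j(w)$ and $j(w/7)$. The starting observation is that $\left(\frac{\eta(w/7)}{\eta(w)}\right)^4 = \mathfrak{w}(w/7)$, where $\mathfrak{w}(\tau) = (\eta(\tau)/\eta(7\tau))^4 = z(\tau)-8$ is the Hauptmodul for $\Gamma_0(7)$ studied in Section 4. Since $k(z) = k(j(\tau),j(7\tau))$ is the function field of $X_0(7)$ (recorded just before (\ref{eqn:6})), the same identity holds over $\mathbb{Q}$, so $z$ is a rational function of $j(\tau)$ and $j(7\tau)$ with rational coefficients, regular off a finite set. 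Specializing at $\tau = w/7$, where $j(\tau)=j(w/7)$, $j(7\tau)=j(w)$ and $z(w/7)$ is finite, gives
$$\mathfrak{w}(w/7) = z(w/7)-8 \in \mathbb{Q}(j(w/7),j(w)).$$
It therefore suffices to show that both $j(w)$ and $j(w/7)$ lie in the asserted field: $\Sigma$ in case (i) and $\Omega_2$ in case (ii).

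The heart of the argument is the lattice bookkeeping identifying the orders and ideal classes attached to $[w,1]$ and $[w/7,1]=\frac{1}{7}[w,7]$. First I would note that $\left(\frac{-l}{7}\right)=+1$ makes $7$ split in $K$, say $(7)=\wp_7\wp_7'$, so that ideals of norm $7$ exist. The key computation is that $[w,7]=w\mathbb{Z}+7\mathbb{Z}$ is closed under multiplication by $w$: writing $w^2 = (\mathrm{Tr}\,w)\,w - N(w)$, the hypothesis $7\mid N(w)$ (a consequence of $N(w)\equiv 0 \pmod{7^2}$) gives $w^2\in[w,7]$, while $7w\in w\mathbb{Z}$ is automatic. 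Hence $[w,7]$ is an ideal of the order $\mathcal{O}=\mathbb{Z}[w]=[w,1]$, and since $[[w,1]:[w,7]]=7$ it has norm $7$.

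In case (i) we have $-d = d_K$, so $\mathcal{O}=\mathcal{O}_K$ is maximal: $[w,1]=\mathcal{O}_K$ is principal and $[w/7,1]=\frac{1}{7}[w,7]$ is homothetic to the $\mathcal{O}_K$-ideal $\wp_7$ or $\wp_7'$. By the first main theorem of complex multiplication, $j(w)=j(\mathcal{O}_K)$ and $j(w/7)=j(\wp_7^{(\prime)})$ both lie in the Hilbert class field $\Sigma=\Omega_1$, so $\mathfrak{w}(w/7)\in\Sigma$. In case (ii), $w=v+\sqrt{-l}$ generates $\mathbb{Z}[w]=\mathbb{Z}[\sqrt{-l}]=\mathcal{O}_2$, the order of conductor $f=2$ (using $l\equiv 3 \pmod 4$); thus $[w,1]=\mathcal{O}_2$ is a proper $\mathcal{O}_2$-ideal, and $[w,7]$, being of norm $7$ prime to the conductor $2$, is an invertible (hence proper) $\mathcal{O}_2$-ideal. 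Applying the first main theorem to the order $\mathcal{O}_2$, both $j(w)$ and $j(w/7)$ lie in the ring class field $\Omega_2$, whence $\mathfrak{w}(w/7)\in\Omega_2$.

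The main obstacle is the ideal-theoretic verification of \emph{properness} in the non-maximal case (ii): one must ensure that $[w,7]$ (equivalently $[w/7,1]$) is an invertible $\mathcal{O}_2$-ideal rather than merely an $\mathcal{O}_2$-module with a larger multiplier ring, for only then does the first main theorem place $j(w/7)$ in $\Omega_2$ rather than in some subfield of the wrong conductor. This is exactly where the coprimality of $N([w,7])=7$ to the conductor $2$—encoded in the hypothesis $2\nmid N(w)$—together with the splitting of $7$, is needed. A secondary point requiring care is justifying the specialization $z(w/7)\in\mathbb{Q}(j(w/7),j(w))$ at the given CM argument, i.e. checking that $w/7$ avoids the finitely many points where the rational expression of $z$ in terms of $j(\tau),j(7\tau)$ degenerates; this can be controlled directly from the explicit relations (\ref{eqn:6})--(\ref{eqn:7}).
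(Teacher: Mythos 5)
Your CM bookkeeping is fine: $[w,7]=(7,w)$ is a proper $\mathcal{O}$-ideal of norm $7$ prime to the conductor, so $j(w)$ and $j(w/7)$ do lie in $\Sigma$ (resp. $\Omega_2$). But the step carrying all the weight --- the specialization $z(w/7)-8\in\mathbb{Q}\bigl(j(w/7),j(w)\bigr)$ --- is false in general, and it fails for primes covered by the lemma. Take $l=19$, $d=19$, $w=\frac{31+\sqrt{-19}}{2}$, so $N(w)=245\equiv 0 \pmod{7^2}$. Since $h(-19)=1$, every fractional $\mathcal{O}_K$-ideal is principal and $j(w)=j(w/7)=-96^3$, so $\mathbb{Q}(j(w/7),j(w))=\mathbb{Q}$; yet $z(w/7)$ is a root of the irreducible quadratic $x^2-11x+73$ (see the proof of Theorem \ref{thm:3} and the factorization of $F(z,-96^3)$ in Lemma \ref{lem:3}), i.e. $z(w/7)=\frac{11\pm 3\sqrt{-19}}{2}\notin\mathbb{Q}$. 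The lemma's conclusion survives only because $\Sigma=K$ happens to contain this value. The structural reason your step fails is that, although $\mathbb{Q}(z)=\mathbb{Q}(j(\tau),j(7\tau))$ as function fields, the map $X_0(7)\to X(1)\times X(1)$ is merely birational onto the singular curve $\Phi_7=0$; a CM point $(j(\mathfrak{a}),j(\wp_7\mathfrak{a}))$ lies at a self-intersection exactly when $\wp_7\sim\wp_7'$, i.e. when $\wp_7^2$ is principal, and then the two branches carry conjugate, generally irrational, values of $z$. This is not the removable technicality you describe: relations (\ref{eqn:6})--(\ref{eqn:7}) express $j(\tau),j(7\tau)$ in terms of $z$, not conversely, and the degeneration actually occurs for the relevant $w$ when $l=5,13,19$ --- precisely the primes that force the case analysis in the proof of Theorem \ref{thm:3}.

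The paper therefore does not go through the $j$-invariants at all. It invokes Schertz's Theorem 6.6.4: because $w/1$ and $w/7$ are basis quotients for the ideals $R_K$ and $\wp_7=(7,w)$ (resp. for ideals of $\textsf{R}_{-4l}$ in case (ii)), the quantities $\alpha=\left(\frac{\eta(w/7)}{\eta(w)}\right)^{8}\gamma_2(w)^6$ and $\beta=\left(\frac{\eta(w/7)}{\eta(w)}\right)^{6}\gamma_3(w)^3$ already lie in $\Sigma$ (resp. $\Omega_2$), and the combination $\beta^2/\alpha=\left(\frac{\eta(w/7)}{\eta(w)}\right)^{4}\frac{(j(w)-12^3)^3}{j(w)^2}$ isolates the fourth power of the eta quotient, using only $j(w)\neq 0,1728$ for $l>3$. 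Any repair of your argument needs an input of exactly this Shimura--Schertz type; the class field containing $z(w/7)$ cannot be read off from the pair $(j(w/7),j(w))$ alone.
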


\begin{proof} (i) By Schertz's Theorem 6.6.4 \cite[p. 159]{sch}, with $\gamma_2(\tau) = j^{1/3}(\tau)$ and $\gamma_3(\tau) = \sqrt{j(\tau)-1728}$, the quantities
\begin{align*}
\alpha =  & \ \left(\frac{\eta(w/7)}{\eta(w)}\right)^8 \gamma_2(w)^6 = \left(\frac{\eta(w/7)}{\eta(w)}\right)^8 j(w)^2,\\
\beta = & \ \left(\frac{\eta(w/7)}{\eta(w)}\right)^6 \gamma_3(w)^3 = \left(\frac{\eta(w/7)}{\eta(w)}\right)^6 (j(w)-12^3)^{3/2},
\end{align*}
lie in $\Sigma$, since $w/1$ and $w/7$ are basis quotients for the ideals $(1) = R_K$ and $\wp_7=(7,w)$ in $R_K$.  It follows that
$$\frac{\beta^2}{\alpha} = \left(\frac{\eta(w/7)}{\eta(w)}\right)^4 \frac{(j(w)-12^3)^3}{j(w)^2} \in \Sigma.$$
Since $l>3$, $j(w) \neq 0, 1728$ and the first assertion of the lemma follows.  The second follows in the same way, using that $w/1$ and $w/7$ are basis quotients for ideals in the ring $\textsf{R}_{-4l}$ of discriminant $d=-4l$ in $K$.
\end{proof}
 
\begin{thm} If $l$ is a prime for which $\left(\frac{-l}{7}\right) = +1$ and $w$ satisfies the conditions (i) or (ii) of Lemma 1, then the ring class field $\Omega_f$ ($f = 1$, resp. $2$) of the field $K=\mathbb{Q}(\sqrt{-l})$ is generated over $\mathbb{Q}$ by
\begin{equation*}
z(w/7) = \left(\frac{\eta(w/7)}{\eta(w)}\right)^4+8.
\end{equation*}
\label{thm:3}
\end{thm}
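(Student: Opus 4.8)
The plan is to realize $\Omega_f$ as $\mathbb{Q}(z(w/7))$ by a sandwich argument. Throughout write $z = z(w/7)$ and let $-d = d_K f^2$ be the discriminant of the order $\mathcal{O}$ for which $w = \frac{v+\sqrt{-d}}{2}$ is primitive, so that $h = h(\mathcal{O})$ and $[\Omega_f:\mathbb{Q}] = 2h$. The easy containment comes from Lemma \ref{lem:1}: $(\eta(w/7)/\eta(w))^4$ lies in $\Sigma = \Omega_1$ in case (i) and in $\Omega_2$ in case (ii), so adding the rational constant $8$ gives $z \in \Omega_f$, whence $\mathbb{Q}(z) \subseteq \Omega_f$. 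For the reverse direction I would first locate a singular modulus inside $\mathbb{Q}(z)$: evaluating (\ref{eqn:6}) at $\tau = w/7$ gives $j(w) = (z^2-3z+9)(z^2-11z+25)^3/(z-8)$, a rational function of $z$ over $\mathbb{Q}$ whose denominator $z-8 = (\eta(w/7)/\eta(w))^4$ is nonzero since $\eta$ has no zeros on the upper half-plane. Thus $j(w)\in\mathbb{Q}(z)$, and since $w$ is primitive for $\mathcal{O}$ the main theorem of complex multiplication gives $K(j(w)) = \Omega_f$. Combining, $\Omega_f = K(j(w)) \subseteq K(z) \subseteq \Omega_f$, so $K(z) = \Omega_f$ and $[\Omega_f : \mathbb{Q}(z)] \le [K:\mathbb{Q}] = 2$.

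The crux is to remove this factor of $2$, i.e. to show $K \subseteq \mathbb{Q}(z)$. Since $\Omega_f/\mathbb{Q}$ is Galois with generalized dihedral group $G$ and normal subgroup $N = \mathrm{Gal}(\Omega_f/K)$ of index $2$, the relation $K(z) = \Omega_f$ says exactly that the fixing group $H = \mathrm{Gal}(\Omega_f/\mathbb{Q}(z))$ satisfies $H \cap N = 1$. Hence $|H| \le 2$, and $\mathbb{Q}(z) = \Omega_f$ unless $H = \langle\tau\rangle$ for an involution $\tau$ restricting to complex conjugation on $K$. So it suffices to show that no such reflection $\tau$ fixes $z$; equivalently, that $z$ has the full complement of $2h$ conjugates over $\mathbb{Q}$.

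I would rule out the reflections in two stages. Complex conjugation itself acts by $c(z) = \overline{z} = z(-\bar w/7)$, and because $7\nmid v$ (which follows from $\wp_7^2 \mid w$ and the primitivity of $w$), the point $-\bar w/7 = w/7 - v/7$ is not a $\Gamma_0(7)$-translate of $w/7$; since $z$ is a Hauptmodul this forces $\overline{z}\neq z$, so $c \notin H$ and $z \notin \mathbb{R}$. To eliminate the remaining reflections, I would bring in the second singular modulus obtained from (\ref{eqn:7}), namely $j(w/7) = (z^2-3z+9)(z^2+229z+505)^3/(z-8)^7 \in \mathbb{Q}(z)$, the $j$-invariant of the curve $7$-isogenous to $E_w$. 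Its ideal class differs from that of $j(w)$ by $[\wp_7]$, and a short computation with the dihedral action (using $c(j(\mathfrak{a})) = j(\mathfrak{a}^{-1})$ and $\sigma_{\mathfrak{b}}(j(\mathfrak{a})) = j(\mathfrak{a}\mathfrak{b}^{-1})$) shows that the two degree-$h$ subfields $\mathbb{Q}(j(w))$ and $\mathbb{Q}(j(w/7))$ of $\Omega_f$ coincide if and only if $[\wp_7^2] = 1$ in $\mathrm{Cl}(\mathcal{O})$. Thus whenever $[\wp_7^2]\neq 1$ these fields are distinct, their compositum has degree $2h$, and $\Omega_f = \mathbb{Q}(j(w),j(w/7)) \subseteq \mathbb{Q}(z)$, giving the theorem in that case.

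The hard part is the residual case $[\wp_7^2] = 1$, where $j(w)$ and $j(w/7)$ generate the same degree-$h$ field and cannot by themselves produce $K$; here one must use $z$ directly. My plan is to invoke Shimura reciprocity for the $\Gamma_0(7)$-Hauptmodul $z$ to compute the action of a general $\sigma_{\mathfrak{m}} \in N$ on $z(w/7)$ as another CM value of $z$, and then combine it with $c(z) = z(-\bar w/7)$ and the Atkin--Lehner relation $z(-1/w) = A(z)$, where $A(z) = (8z-15)/(z-8)$, coming from the $\mathcal{G}_7$-action in Theorem \ref{thm:2}. A reflection $\tau = c\,\sigma_{\mathfrak{m}}$ fixes $z$ precisely when $\sigma_{\mathfrak{m}}(z) = \overline{z}$, an equality of Hauptmodul values that translates into a $\Gamma_0(7)$-equivalence $\mathfrak{m}^{-1}\!\ast(w/7) \equiv -\bar w/7$; showing this is impossible for $\mathfrak{m}\neq 1$ is the technical heart of the argument, and is exactly the bookkeeping (tracking the behavior at the split prime $\wp_7$ and the level structure at $7$) that makes the reciprocity computation delicate. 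Once no reflection fixes $z$, we conclude $H = 1$, so $\mathbb{Q}(z(w/7)) = \Omega_f$ as claimed.
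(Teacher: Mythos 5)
Your easy containment and the reduction to $[\Omega_f:\mathbb{Q}(z)]\le 2$ via $j(w)\in\mathbb{Q}(z)$ match the paper, and your dihedral computation in the case $[\wp_7^2]\neq 1$ is correct and essentially equivalent to the paper's observation that $\mathbb{Q}(z)=\mathbb{Q}(j(w))$ would force $j(w/7)$ to be real. But there are two genuine gaps. First, your ``stage 1'' claim that $\overline{z}\neq z$ because $-\bar w/7=w/7-v/7$ ``is not a $\Gamma_0(7)$-translate of $w/7$'' does not follow: two points of the upper half-plane can be $\Gamma_0(7)$-equivalent without either being an integer-translate of the other. Since $z(w/7)=z(-\bar w/7)$ would force $j(w/7)=\overline{j(w/7)}$, i.e.\ $\wp_7^2\sim 1$ (the lattice $[1,w/7]$ lies in the class of $\wp_7$), your assertion is automatic when $[\wp_7^2]\neq 1$ and unsupported exactly in the residual case where you would need it. Second, and decisively, the residual case $[\wp_7^2]=1$ is not proved: you outline a Shimura-reciprocity computation and then declare that carrying it out ``is the technical heart of the argument'' without doing it. Since that is the only case in which the theorem could still fail, the proof as written does not reach the conclusion.

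The paper closes that case by an elementary device your sketch misses: $\wp_7^2\sim 1$ in the ring class group of $\textsf{R}_{-f^2l}$ forces a primitive representation $4\cdot 7^2=x^2+lf^2y^2$ with $0\le x\le 13$, which leaves only $l\in\{3,5,13,19\}$; these finitely many cases are then settled by computing the minimal polynomial of $z(w/7)$ as a common factor of the two polynomials built from $H_{-d}$ via (\ref{eqn:6}) and (\ref{eqn:7}), and checking it has degree $2h(-d)$. (An alternative clean finish, in the spirit of Lemma \ref{lem:2}, is the classical ideal factorization $z-8=\left(\eta(w/7)/\eta(w)\right)^4\cong\wp_7'^2$: any $\tau\in\mathrm{Gal}(\Omega_f/\mathbb{Q})$ restricting to complex conjugation on $K$ sends the ideal $(z-8)$ to $\wp_7^2R_{\Omega_f}\neq\wp_7'^2R_{\Omega_f}$, so no reflection fixes $z$; this disposes of all cases at once.) Either route would complete your argument.
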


\begin{proof} Letting $\tau=w/7$, Lemma \ref{lem:1} shows that $z(\tau) \in \Omega_f$, where $f=1$ or $2$, resp., in cases (i) and (ii) of the Lemma.  The claim is that $z(\tau)$ generates $\Omega_f$ over $\mathbb{Q}$.  On the one hand, it is clear that $j(\tau) \in \mathbb{Q}(z(\tau))$ by equation (\ref{eqn:7}).  Since $z(\tau) \in \Omega_f$, it follows that $z(\tau)$ has degree $h(-d)$ or $2h(-d)$ over $\mathbb{Q}$, where $d = f^2l$.  Suppose the former holds.  Now the real value $j(w)=j(7\tau)$ is a rational function of $z(\tau)$, by (\ref{eqn:6}).  If $[\mathbb{Q}(z(\tau)):\mathbb{Q}] = h(-d)$, then $\mathbb{Q}(j(w)) = \mathbb{Q}(z(\tau))$.  From (\ref{eqn:7}) it follows that $j(w/7)$ is a real number, whence it follows that $\wp_7^2 \sim 1$ must be principal in the ring class group $\mathcal{C}_{\mathcal{O}}$ of $\mathcal{O} = \textsf{R}_{-f^2 l} \subset K$.  Hence, we have a primitive solution $(x,y)$ of the equation
$$4 \cdot 7^2 = x^2 + lf^2y^2, \ \ 0 \le x \le 13.$$
This implies that $l $ is one of the primes $3, 5, 13, 19$.  In these cases the minimal polynomial $m(x)$ of $z(\tau)$ over $\mathbb{Q}$ divides both polynomials
$$F_d(x) = (x-8)^{h(-d)}H_{-d}\left(\frac{(x^2-3x+9)(x^2-11x+25)^3}{x-8}\right)$$
and
$$G_d(x) = (x-8)^{7h(-d)}H_{-d}\left(\frac{(x^2-3x+9)(x^2+229x+505)^3}{(x-8)^7}\right),$$
by (\ref{eqn:6}) and (\ref{eqn:7}), where $H_{-d}(X)$ is the class equation for the discriminant $-d = d_K f^2 = \textrm{disc}(K) f^2= -l$ or $-4l$.  We have
\begin{align*}
& H_{-3}(X) = X,\\
& H_{-12}(X) = X-54000,\\
& H_{-20}(X) = X^2 - 1264000X - 681472000,\\
& H_{-52}(X) = X^2 - 6896880000X - 567663552000000,\\
& H_{-19}(X) = X+96^3.
\end{align*}
Now comparing the factorizations of $F_d(x)$ and $G_d(x)$ yields
\begin{align*}
l=3: \ & m(x) = x^2-3x+9;\\
l = 3, f = 2: \ & m(x) = x^2-5x+25;\\
l = 5: \ & m(x) = x^4 - 2x^3 - 9x^2 - 50x + 305;\\
l = 13: \ & m(x) = x^4 - 58x^3 + 1599x^2 - 17770x + 67825\\
l = 19: \ & m(x) = x^2 - 11x + 73.
 \end{align*}
 Thus, $z(\tau)$ has degree $2h(-d)$ in every case.  This proves the theorem.
 \end{proof}
 
 \begin{lem} If $\tau=w/7$, where $w = \frac{v+\sqrt{-d}}{2}$ is divisible by $\wp_7^2$, the quantities
\begin{equation}
z-8 = \left(\frac{\eta(\tau)}{\eta(7\tau)}\right)^4\cong \wp_7'^2 \ \textrm{and} \ z_1 - 8 = 49\left(\frac{\eta(7\tau)}{\eta(\tau)}\right)^4\cong \wp_7^2
\label{eqn:8}
\end{equation}
are conjugates over $\mathbb{Q}$.
\label{lem:2}
\end{lem}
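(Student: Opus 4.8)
The plan is to realize $z_1-8$ as the Fricke transform of $z-8$ and then to produce a single automorphism of $\Omega_f/\mathbb{Q}$ carrying one to the other. First I would record the action of $W_7$ on the $\eta$-quotient. Writing $S_7\tau=-1/(7\tau)$ and using $\eta(-1/u)=\sqrt{-iu}\,\eta(u)$ with $u=7\tau$ and $u=\tau$, one gets $\eta(S_7\tau)/\eta(7S_7\tau)=\sqrt{7}\,\eta(7\tau)/\eta(\tau)$, so that $z(S_7\tau)-8=49(\eta(7\tau)/\eta(\tau))^4=z_1-8$; equivalently $z_1=\tfrac{8z-15}{z-8}$ and $(z-8)(z_1-8)=49$. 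Specializing $\tau=w/7$ gives $z_1=z(-1/w)$, and by Lemma \ref{lem:1} and Theorem \ref{thm:3} both $z-8$ and $z_1-8=49/(z-8)$ lie in and generate $\Omega_f$ over $\mathbb{Q}$, of degree $2h(-d)$.

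Next I would attach the CM data to each singular value. With $\tau=w/7$ one has $[7\tau,1]=[w,1]=\mathcal{O}$ (the order $\textsf{R}_{-d}$, since $w$ is primitive) and $[\tau,1]=\tfrac17[w,7]=\tfrac17\wp_7$, so by (\ref{eqn:6}) and (\ref{eqn:7}) the value $z=z(w/7)$ encodes the ordered pair of $j$-invariants $(j(w/7),j(w))=(j(\wp_7),j(\mathcal{O}))$ joined by the canonical $7$-isogeny. At $\tau'=-1/w$ one has $j(-1/w)=j(w)=j(\mathcal{O})$ and $j(-7/w)=j(w/7)=j(\wp_7)$, so $z_1=z(-1/w)$ encodes the reversed pair $(j(\mathcal{O}),j(\wp_7))$; in particular $z_1$ is again a singular value of the $\Gamma_0(7)$-Hauptmodul of discriminant $-d$. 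Here $j(\mathcal{O})$ is the real root of $H_{-d}$ attached to the principal class, while $\overline{j(\wp_7)}=j(\wp_7')$.

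The heart of the argument is to produce $\sigma\in\mathrm{Gal}(\Omega_f/\mathbb{Q})$ with $z^\sigma=z_1$. I would take $\sigma$ to be the automorphism obtained by first applying the Artin symbol $\sigma_{\wp_7}\in\mathrm{Gal}(\Omega_f/K)$ of $\wp_7$ (well defined since $\wp_7$ is prime to the conductor $f\in\{1,2\}$) and then complex conjugation $\rho$. By the main theorem of complex multiplication (cf. \cite{sch}), $\sigma_{\wp_7}$ sends the class of an ideal $\mathfrak{c}$ to that of $\wp_7^{-1}\mathfrak{c}$, while $\rho$ inverts ideal classes. Applied in turn to the pair $(\wp_7,\mathcal{O})$ attached to $z$, these send it first to $(\mathcal{O},\wp_7^{-1})\sim(\mathcal{O},\wp_7')$ and then, under $\rho$, to $(\mathcal{O},\wp_7)$, which is exactly the pair attached to $z_1$. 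Since $z^\sigma$ and $z_1$ thus lie over the same point of $X(1)\times X(1)$, tracking the cyclic $7$-structure identifies them and forces $z^\sigma=z_1$. Hence $z_1-8=(z-8)^\sigma$ is conjugate to $z-8$ over $\mathbb{Q}$. The ideal factorizations $z-8\cong\wp_7'^2$ and $z_1-8\cong\wp_7^2$ recorded in the statement follow from the Schertz-type factorization of the singular $\eta$-quotient $(\eta(w/7)/\eta(w))^4$ underlying Lemma \ref{lem:1}, and are interchanged by $\rho$, consistently with the conjugacy just established.

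The main obstacle is the last step: passing from the coincidence of the two $j$-invariant pairs to the genuine equality $z^\sigma=z_1$. The fiber of $X_0(7)\to X(1)\times X(1)$ over a CM point can contain two points, so one must follow the action of $\sigma_{\wp_7}$ and $\rho$ on the full $\Gamma_0(7)$ level structure (the chosen cyclic $7$-subgroup), not merely on the two $j$-invariants; this is where Shimura reciprocity must be applied with care. In particular one must verify that the normalization $\wp_7^2\mid w$, $(w,f)=1$ pins down the correct prime, so that it is $\wp_7'^2$ (and not $\wp_7^2$) that is attached to $z-8$, and one must confirm the stated power of $\wp_7$, $\wp_7'$ rather than merely the conjugacy; both points are handled by the singular-value formulas of Schertz cited in the proof of Lemma \ref{lem:1}.
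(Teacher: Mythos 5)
Your overall strategy matches the paper's: both realize $z_1-8=49/(z-8)=z(-7/w)-8$ via the $\eta$-transformation, attach CM data so that $z$ and $z_1$ lie over matching pairs of $j$-invariants of discriminant $-d$, and then seek a Galois conjugation over $\mathbb{Q}$ (in the paper, a $\sigma\in\mathrm{Gal}(\Omega_f/K)$ with $j(\mathfrak{a})^\sigma=j(w)$ composed with complex conjugation) carrying one to the other. But the decisive step is missing from your proposal, and you say so yourself: knowing that $z^{\sigma}$ and $z_1$ lie over the same pair of $j$-invariants does not force $z^\sigma=z_1$, because the fiber of the covering $z\mapsto\bigl(J(z),\,j\text{-value}\bigr)$ over a CM point need not be a single point. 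You defer this to ``Shimura reciprocity applied with care'' and ``tracking the cyclic $7$-structure,'' but you never actually carry out that tracking, and it is precisely the content of the lemma. A referee would not accept the proof as written.

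The paper closes this gap by a concrete, purely ideal-theoretic argument that your proposal does not contain and that does not require any further level-structure bookkeeping. Setting $u=\bar z_1-8$ and $v=z^{\sigma^{-1}}-8$, both of which satisfy $J(u+8)=J(v+8)=j(\mathfrak{a})$ and both of which have ideal factorization $\cong\wp_7'^2$, one observes that if $u\neq v$ then $(u,v)$ lies on the curve
$$F(u,v)=uv\,\frac{J(u+8)-J(v+8)}{u-v}=A_8+28A_7+322A_6+1904A_5+5915A_4+8624A_3+4018A_2-49=0,$$
where $A_k=\sum_{i=1}^{k-1}u^iv^{k-i}$. Since $u\cong v\cong\wp_7'^2$, each $A_k$ is divisible by $\wp_7'^{2k}$, hence by $\wp_7'^4$; so $F(u,v)=0$ would force $\wp_7'^4\mid 49$, which is false since $49\cong\wp_7^2\wp_7'^2$. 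This contradiction gives $\bar z_1=z^{\sigma^{-1}}$ and hence the conjugacy over $\mathbb{Q}$. If you want to complete your version, you either need to supply an argument of this kind, or genuinely execute the Shimura-reciprocity computation on the $\Gamma_0(7)$-structure that you only gesture at; as it stands, the proposal identifies the obstacle correctly but leaves the lemma unproved. (Your opening reduction and the factorizations $z-8\cong\wp_7'^2$, $z_1-8\cong\wp_7^2$ are fine and agree with the paper, which cites Deuring for the latter.)
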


\begin{proof} First note that the ideal factorizations $z-8 \cong \wp_7'^2$ and $z_1 - 8 \cong \wp_7^2$ in (\ref{eqn:8}) follow from a classical result \cite[p.43, (1.3.)]{d2}.  We have
$$z - 8 = z(w/7) - 8 = \left(\frac{\eta(w/7)}{\eta(w)}\right)^4$$
and
\begin{equation*}
z_1-8 = 49\left(\frac{\eta(w)}{\eta(w/7)}\right)^4 = \frac{49}{z-8} = z\left(\frac{-7}{w}\right) -8,
\end{equation*}
by the transformation formula $\eta(-1/\tau) = \sqrt{\frac{\tau}{i}} \eta(\tau)$.  Furthermore, if $w', \mathfrak{a}'$ denote the conjugates of quantities (or ideals) $w, \mathfrak{a}$ in $K$,
$$\frac{-7}{w} = \frac{-7w'}{N(w)} = \frac{-w'}{7a} = \frac{-v+\sqrt{-d}}{2(7a)},$$
where
\begin{equation*}
w = \frac{v+\sqrt{-d}}{2}, \ \ N(w) = 7^2a.
\end{equation*}
Setting $\mathfrak{a} = (a, w)$, we have $\wp_7 \mathfrak{a} = (7a,w)$ and 
$$z_1 - 8 = z\left(\frac{-w'}{7a}\right) - 8 = \overline{z\left(\frac{w}{7a}\right)} - 8.$$
Putting $J(x)$ equal to the rational function on the right side of (\ref{eqn:6}), we have
$$J(z(w/7)) = J(z) = \frac{(z^2-3z+9)(z^2-11z+25)^3}{z-8} = j(w),$$
and
$$J(z_1) = J(z(-w'/7a)) = j(-w'/a) = j(\mathfrak{a}') = \overline{j(\mathfrak{a})}.$$
Thus, $J(\bar z_1) = j(\mathfrak{a})$ and if $j(\mathfrak{a})^\sigma = j(w)$, with $\sigma \in \textrm{Gal}(\Omega_f/K)$, then
$$J(z^{\sigma^{-1}}) = j(\mathfrak{a}) = J(\bar z_1).$$
I claim that $\bar z_1 = z^{\sigma^{-1}}$.  Note that $\bar z_1 -8 \cong z^{\sigma^{-1}} -8 \cong z-8 \cong \wp_7'^2$. \medskip

Assume that $u=\bar z_1-8$ is distinct from $v=z^{\sigma^{-1}}-8$.  Then $(u,v)$ is a point on the curve $F(u,v)=0$, where

\begin{align*}
F(u,v) & = u v \frac{J(u+8)-J(v+8)}{u-v}\\
& = u^7 v + (v^2 + 28v)u^6 + (v^3 + 28v^2 + 322v)u^5\\
& \ + v(v^3 + 28v^2 + 322v + 1904)u^4\\
& \ + v(v^4 + 28v^3 + 322v^2 + 1904v + 5915)u^3\\
& \ + v(v^5 + 28v^4 + 322v^3 + 1904v^2 + 5915v + 8624)u^2\\
& \ + v(v^6 + 28v^5 + 322v^4 + 1904v^3 + 5915v^2 + 8624v + 4018)u-49.
\end{align*}
If we set
$$A_k = A_k(u,v) = \sum_{i=1}^{k-1}{u^i v^{k-i}},$$
then the above expression gives that
\begin{align*}
F(u,v) & = A_8+28A_7+322A_6+1904 A_5+5915A_4+8624A_3+4018A_2-49,
\end{align*}
where all coefficients except the first are divisible by $7$.  Moreover, since $u \cong v$, $u^k \mid A_k$, whence $\wp_7'^{2k} \mid A_k$.  But then $F(u,v)=0$ would imply $\wp_7'^4 \mid 49$ in $\Omega_f$, which is not the case.  This contradiction shows that $\bar z_1 = z^{\sigma^{-1}}$, and therefore $z_1$ is a conjugate of $z$ over $\mathbb{Q}$. 
\end{proof}

This lemma implies the following result.

\begin{thm} If $m_d(x)$ is the minimal polynomial of $z(\tau) = z(w/7)$ over $\mathbb{Q}$, the polynomial $P_d(x)$ defined by
$$P_d(x) = x^{2h}(x-1)^{2h}m_d\left(\frac{x^3-3x+1}{x(x-1)}\right)$$
is irreducible over $\mathbb{Q}$ and is the minimal polynomial of $h(\tau) = h(w/7)$ over $\mathbb{Q}$.  Furthermore,
$$((1-r)x-1)^{6h(-d)} P_d(T_1(x)) = (r^2-r)^{2h(-d)} 7^{4h(-d)} P_d(x).$$
\label{thm:4}
\end{thm}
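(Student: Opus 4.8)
The plan is to establish Theorem~\ref{thm:4} in two parts: first that $P_d(x)$ is the minimal polynomial of $h(\tau)=h(w/7)$ over $\mathbb{Q}$, and second the functional equation under $T_1$. For the first part, I would use the substitution $z=\frac{x^3-3x+1}{x(x-1)}$, which is exactly the trace relation $z=\frac{h^3-3h+1}{h(h-1)}$ established in Section~4. Since $z(\tau)$ generates $\Omega_f$ over $\mathbb{Q}$ (by Theorem~\ref{thm:3}) and has degree $2h(-d)$, its minimal polynomial $m_d(x)$ has degree $2h$. The relation $z=\frac{h^3-3h+1}{h(h-1)}$ expresses $z$ as a degree-$3$ rational function of $h$, so clearing denominators in $m_d\!\left(\frac{x^3-3x+1}{x(x-1)}\right)$ by multiplying by $x^{2h}(x-1)^{2h}$ yields a polynomial $P_d(x)$ of degree $6h$ having $h(\tau)$ as a root. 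The key point is that $[\,\mathbb{Q}(h(\tau)):\mathbb{Q}(z(\tau))\,]=3$: this follows because $\mathbb{Q}(h(\tau))\supseteq\mathbb{Q}(z(\tau))$ contains the cyclic cubic field extension detected by the Galois action of $\phi:h\mapsto\frac{1}{1-h}$ from Theorem~\ref{thm:2}, so $h(\tau)$ has degree exactly $3$ over $\mathbb{Q}(z(\tau))$ and hence degree $6h$ over $\mathbb{Q}$. Therefore $P_d(x)$, being of degree $6h$ with $h(\tau)$ as a root, must be (up to a constant) the minimal polynomial, and in particular irreducible over $\mathbb{Q}$.

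For the irreducibility I would make this degree-counting argument rigorous by noting that the minimal polynomial of $h(\tau)$ divides $P_d(x)$, and comparing degrees forces equality. The subtlety to check is that the leading coefficient of $P_d(x)$ is a nonzero constant, so that the degree is genuinely $6h$ and no spurious factors of $x$ or $(x-1)$ are introduced or lost; this is a direct computation from the shape of $m_d$ and the substitution, using that $m_d$ has nonzero constant term (since $z(\tau)\neq 0$, as $z-8\cong\wp_7'^2$ is nonzero). One must also confirm $h(\tau)\neq 0,1$, which holds because these would force $z(\tau)$ to be infinite.

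For the functional equation, the strategy is to exploit the $S_3$-action of $\mathcal{G}_7=\langle\phi,T_1\rangle$ on $k(h)$ from Theorem~\ref{thm:2}, together with the explicit transformation of $f_7(x,t)$ under $T_1$ recorded in the proof of Proposition~\ref{prop:1}. Recall from there that
$$((1-r)x-1)^6 f_7(T_1(x),t)=\eta_0(r+2)^3 f_7(x,t),$$
where $\eta_0=\frac{1}{7}(19r^2-15r-1)$ is a unit and $N_{\mathbb{Q}(r)/\mathbb{Q}}(r+2)=7^2$. The idea is that $P_d(x)$ is a product over the conjugates of $h(\tau)$, and these conjugates are permuted among themselves by $T_1$ up to the linear-fractional factor $((1-r)x-1)$; applying $T_1$ to $P_d$ and tracking the accumulated scalar across all $6h(-d)$ roots produces the stated constant. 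Concretely, I would write $P_d(x)=c\prod(x-h_i)$ and compute $((1-r)x-1)^{6h}P_d(T_1(x))$ by substituting $T_1(x)-h_i=\frac{(x-r)-h_i((1-r)x-1)}{(1-r)x-1}$; the numerators reproduce the roots (a permutation of the $h_i$, since $T_1\in\mathcal{G}_7$ stabilizes the root set), and the denominators contribute $((1-r)x-1)^{6h}$, which is cancelled. The residual scalar is a product of $6h$ factors, each an evaluation related to $(r^2-r)$ and a power of $7$, giving $(r^2-r)^{2h}7^{4h}$ after collecting; the exponents $2h$ and $4h$ (rather than $6h$) reflect that the $6h$ roots group into orbits under the full $\mathcal{G}_7\cong S_3$ action, with the $T_1$-scalar distributed over the three cosets.

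\emph{Main obstacle.} The hard part will be the bookkeeping in the functional-equation computation: verifying that the scalar accumulated over all $6h(-d)$ conjugates collapses to exactly $(r^2-r)^{2h(-d)}7^{4h(-d)}$ rather than some other power. This requires knowing precisely how $T_1$ permutes the roots of $P_d$ and how the local scalar from the $f_7$-transformation (involving the unit $\eta_0$ and the norm-$7^2$ element $r+2$) interacts with taking the product; in particular one must confirm that the unit $\eta_0$ and the $(r+2)$-factors combine so that only $(r^2-r)$ and a power of $7$ survive after applying the norm from $\mathbb{Q}(r)$ down to $\mathbb{Q}$. I would handle this by grouping the $6h$ roots into the $2h$ orbits of the order-$3$ subgroup $\langle\phi\rangle$ (on which $T_1$ acts as a transposition pairing cosets) and computing the scalar orbit-by-orbit, reducing the global identity to a single algebraic identity over $\mathbb{Q}(r)$ that can be checked directly.
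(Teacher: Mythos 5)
The central gap is in your irreducibility argument. You assert that $[\mathbb{Q}(h(\tau)):\mathbb{Q}(z(\tau))]=3$ ``follows because $\mathbb{Q}(h(\tau))\supseteq\mathbb{Q}(z(\tau))$ contains the cyclic cubic extension detected by the Galois action of $\phi$ from Theorem~\ref{thm:2}.'' But Theorem~\ref{thm:2} is a statement about the modular function fields $k(h(\tau))\supset k(z(\tau))$ with $\tau$ an indeterminate; when you specialize at the CM point $\tau=w/7$ the degree can only drop, and ruling out that it drops is precisely the content of the theorem. Because $\phi(x)=1/(1-x)$ is defined over $\mathbb{Q}$ and permutes the roots of $x^3-3x+1-z(w/7)\,x(x-1)$ transitively, this cubic over $\Omega_f=\mathbb{Q}(z(w/7))$ either stays irreducible or splits completely, and your argument gives no reason to exclude the second alternative (i.e.\ $h(w/7)\in\Omega_f$). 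The paper excludes it by first proving the displayed $T_1$-identity and then observing that it forces $T_1(\alpha)$ to be a root of $P_d$ whenever $\alpha$ is; if all roots lay in $\Omega_f$, then solving $T_1(\alpha)=\frac{\alpha-r}{(1-r)\alpha-1}$ for $r$ (a linear fractional relation in $r$ with determinant $\alpha^2-\alpha+1\neq 0$) would put $r$ in $\Omega_f$, contradicting $\mathbb{Q}(r)\cap\Omega_f=\mathbb{Q}$ (as $\mathbb{Q}(r)/\mathbb{Q}$ is totally ramified at $7$). So the logical order is the reverse of yours: the functional equation comes first and is the engine of the irreducibility proof, not a consequence of it.

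On the functional equation itself: your plan assumes that ``$T_1\in\mathcal{G}_7$ stabilizes the root set'' of $P_d$, but that is not automatic either --- it needs Lemma~\ref{lem:2}, that $z$ and $A(z)=8+49/(z-8)$ are conjugate over $\mathbb{Q}$, since $T_1$ carries the roots of the cubic belonging to a conjugate $z^\sigma$ to those of the cubic belonging to $A(z^\sigma)$. The paper avoids orbit-by-orbit bookkeeping entirely: it computes $((1-r)x-1)^3$ times the transformed cubic in closed form, obtaining $r(r-1)\eta\,\bigl(x^3-(8+\tfrac{49}{\eta})x^2+(5+\tfrac{49}{\eta})x+1\bigr)$ with $\eta=z-8$, and then takes the norm from $\Omega_f L$ to $L=\mathbb{Q}(r)$, invoking Lemma~\ref{lem:2} to identify the resulting product with $P_d(x)$. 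Your alternative of pairing the cubics into sextics $f_7(x,t_i)$ and using the scalar from Proposition~\ref{prop:1} would also work once Lemma~\ref{lem:2} is in hand, and it evaluates cleanly: $\eta_0(r+2)^3=7^2(r^2-r)^2$, giving the total scalar $(r^2-r)^{2h}7^{2h}$ with $h=h(-d)$. Your instinct that the exponent bookkeeping is the delicate point is sound: this value, and an independent leading-coefficient check (the leading coefficient of the left-hand side is $(r^2-r)^{2h}N_{\Omega_f/\mathbb{Q}}(\eta)$, where $N(\eta)^2=49^{2h}$ by Lemma~\ref{lem:2}), both point to $7^{2h(-d)}$ rather than the $7^{4h(-d)}$ appearing in the statement, so the constant would need to be resolved in any complete write-up.
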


\begin{proof}
From Section 4, we know that $h(\tau)$ satisfies the cubic equation
\begin{equation}
x^3-3x+1-z(\tau)x(x-1)=0.
\label{eqn:9}
\end{equation}
I claim this equation is irreducible over $\Omega_f = \mathbb{Q}(z(\tau))$, for $l > 3$.  If this equation factors, then it is a product of three linear factors over $\Omega_f$ (by the action of $\phi$), so its norm $N_\mathbb{Q}(x^3-3x+1-z(\tau)x(x-1))$ to $\mathbb{Q}$ factors as a product of three factors of degree $2h(-d)$ (with $d = f^2l$, as before).  We have
$$N_\mathbb{Q}(x^3-3x+1-z(\tau)x(x-1)) = x^{2h}(x-1)^{2h}m_d\left(\frac{x^3-3x+1}{x(x-1)}\right)=P_d(x).$$
Now apply the mapping $T_1(x)$ to $P_d(x)$.  With $r=r_1$ we obtain:
\begin{align*}
((1-r)x-1)^{6h}P_d(T_1(x)) = &\\
((1-r)x-1)^{6h} &T_1(x)^{2h} (T_1(x)-1)^{2h}m_d\left(\frac{T_1(x)^3-3T_1(x)+1}{T_1(x)(T_1(x)-1)}\right).
\end{align*}
Let $L=\mathbb{Q}(r)$.  Then $L \cap \Omega_f = \mathbb{Q}$, since $L/\mathbb{Q}$ is totally ramified at $p=7$.  Now, letting $z(\tau) = \eta+8$, we have
\begin{align*}
T_1(x)^{2h} (T_1(x)-1)^{2h}&m_d\left(\frac{T_1(x)^3-3T_1(x)+1}{T_1(x)(T_1(x)-1)}\right) =\\
&N_L\left(T_1(x)^3-3T_1(x)+1-(\eta+8)T_1(x)(T_1(x)-1)\right);
\end{align*}
here $N_L$ denotes the norm from $\Omega_f L$ to $L$.  We compute
\begin{align*}
T_1(x)^3-3T_1(x)+&1-(\eta+8)T_1(x)(T_1(x)-1)\\
&= r(r-1)\frac{\eta x^3 + (-8\eta - 49)x^2 + (5\eta + 49)x + \eta}{((1-r)x-1)^3} \\
& = r(r-1) \eta \frac{x^3 - \left(8 + \frac{49}{\eta}\right)x^2 + \left(5 + \frac{49}{\eta}\right)x + 1}{((1-r)x-1)^3}.
\end{align*}
Thus,
\begin{align*}
((1-r)x-1)^{6h}&P_d(T_1(x))\\
= (r(r-1))^{2h} &N_L(\eta) N_L\left(x^3 - \left(8 + \frac{49}{\eta}\right)x^2 + \left(5 + \frac{49}{\eta}\right)x + 1\right).
\end{align*}
Since $\eta$ and $49/\eta$ are conjugate over $\mathbb{Q}$, it follows that
\begin{align*}
((1-r)x-1)^{6h}& P_d(T_1(x)) \\
 & = (r(r-1))^{2h} 7^{4h} N_L(x^3 - (8 + \eta)x^2 + (5 + \eta)x + 1)\\
 & = (r^2-r)^{2h} 7^{4h} P_d(x).
\end{align*}

Now, if $P_d(x)$ factors over $\mathbb{Q}$, it is a product of three factors of degree $2h=2h(-d)$ over $\mathbb{Q}$, and the roots of each factor lie in $\Omega_f$.  But the above computation shows that if $\alpha$ is any root of $P_d(x)$, $T_1(\alpha)$ is also a root.  On the other hand,
$$T_1(\alpha) = \frac{\alpha-r}{(1-r)\alpha-1} = \frac{-r+\alpha}{-\alpha r+\alpha-1}$$
is a linear fractional expression in $r$ with determinant $\alpha^2-\alpha+1\neq 0$, since $l > 3$ and $\mathbb{Q}(\alpha) = \Omega_f$. But then $\alpha, T_1(\alpha) \in \Omega_f$ would imply that $r \in \Omega_f$, which is false.
\end{proof}

We now prove the following.

\begin{thm} If $d= l$ or $4l$, $\left(\frac{-l}{7}\right) = +1$, and $w = \frac{v+\sqrt{-d}}{2}$ is divisible by $\wp_7^2$, then 
$$\mathbb{Q}(h(w/7)) = \Sigma_{\wp_7'} \Omega_f,$$
where $\Sigma_{\wp_7'}$ is the ray class field of conductor $\mathfrak{f}=\wp_7'$ over $K$.  Hence, $h(w/7)$ generates the inertia field of $\wp_7$ in $\Sigma_7 \Omega_f$ over $K = \mathbb{Q}(\sqrt{-l})$.
\label{thm:5}
\end{thm}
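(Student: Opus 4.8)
The plan is to realize $L := \mathbb{Q}(h(w/7))$ as a cyclic cubic extension of $\Omega_f$ which is abelian over $K$ with conductor exactly $\wp_7'$, and then to close the argument by a degree count. First I would record what Theorems \ref{thm:3} and \ref{thm:4} already supply. Since $\Omega_f = \mathbb{Q}(z(w/7))$ and, by Section 4, $h = h(w/7)$ is a root of the cubic $x^3 - z x^2 + (z-3)x + 1$ over $\mathbb{Q}(z) = \Omega_f$ — this cubic being irreducible precisely because $P_d$ is irreducible of degree $6h(-d) = 3\cdot 2h(-d)$ — the extension $L/\Omega_f$ is cyclic of degree $3$ with $\textrm{Gal}(L/\Omega_f) = \langle \phi \rangle$, the conjugate roots being $\phi(h) = 1/(1-h)$ and $\phi^2(h)$. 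As $K \subseteq \Omega_f \subseteq L$ and $[L:\mathbb{Q}] = \deg P_d = 6h(-d)$, we get $[L:K] = 3h(-d)$. On the other side $\Sigma_{\wp_7'}$ is cubic over the Hilbert class field $\Sigma$ (because $(R_K/\wp_7')^*/\{\pm 1\}$ has order $3$ for $l>3$), and since the ramification of $\Sigma_{\wp_7'}/\Sigma$ (at $\wp_7'$) is disjoint from that of $\Omega_f/\Sigma$ (at primes dividing $f$), one finds $[\Sigma_{\wp_7'}\Omega_f : K] = 3h(-d)$ as well. Thus the theorem reduces to the single inclusion $L \subseteq \Sigma_{\wp_7'}\Omega_f$.

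The heart of the matter is to show $L$ is abelian over $K$ with conductor $\wp_7'$. Abelianness I would obtain from the main theorem of complex multiplication: $h(\tau)$ is a Hauptmodul for $\Gamma_1(7)$ with rational Fourier coefficients, and the lattice $\mathbb{Z} + \mathbb{Z}(w/7)$ is homothetic to $\tfrac{1}{7}\wp_7$, so by Shimura's reciprocity law (or, concretely, by Schertz's theory of ray class fields generated by such quotients, as used in Lemma \ref{lem:1}) the value $h(w/7)$ lies in the ray class field of conductor $(7) = \wp_7\wp_7'$ over $K$, composed with $\Omega_f$; in particular $L \subseteq \Sigma_7\Omega_f$ and $L/K$ is abelian. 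To pin down the conductor I would use the factorization $z - 8 = \big(\eta(\tau)/\eta(7\tau)\big)^4 \cong \wp_7'^2$ from Lemma \ref{lem:2}, together with the identity $z - 8 = (h^3 - 8h^2 + 5h + 1)/(h(h-1))$ from Section 4, to show that $\wp_7'$ is totally ramified and $\wp_7$ is unramified in $L/\Omega_f$. This identifies the conductor of $L/K$ as $\wp_7'$, gives $L \subseteq \Sigma_{\wp_7'}\Omega_f$, and hence equality by the degree count above.

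The main obstacle is exactly this last identification: distinguishing $\wp_7'$ from its conjugate $\wp_7$ and certifying that the conductor is $\wp_7'$ rather than $1$ or $\wp_7\wp_7'$. The entire asymmetry is forced by the hypothesis $\wp_7^2 \mid w$, which through Lemma \ref{lem:2} attaches the square factor to $\wp_7'$ — the two primes being interchanged by the involution $\tau \mapsto -1/w$, i.e.\ $z \mapsto z_1$ with $z_1 - 8 \cong \wp_7^2$; I would track the $\wp_7'$-adic valuation of $z-8$ through the cubic relation to certify total ramification at $\wp_7'$ and none at $\wp_7$. Finally, for the concluding assertion, $\Sigma_7\Omega_f = \Sigma_{\wp_7\wp_7'}\Omega_f$ contains $\Sigma_{\wp_7'}\Omega_f$, and the inertia group of $\wp_7$ in $\textrm{Gal}(\Sigma_7\Omega_f/K)$ is the image of $(R_K/\wp_7)^*$, of order $3$; its fixed field is the maximal subextension unramified at $\wp_7$, namely $\Sigma_{\wp_7'}\Omega_f = L$. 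Hence $h(w/7)$ generates the inertia field of $\wp_7$ in $\Sigma_7\Omega_f$ over $K$.
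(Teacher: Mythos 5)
Your skeleton coincides with the paper's: containment $\mathbb{Q}(h(w/7)) \subseteq \Sigma_7\Omega_f$ via S\"ohngen, the degree counts $[\Sigma_7\Omega_f:\Omega_f]=18$ and $[\Sigma_{\wp_7'}\Omega_f:\Omega_f]=3$, and identification of $\Omega_f(h(w/7))$ with the inertia field of $\wp_7$. But the step you compress into ``track the $\wp_7'$-adic valuation of $z-8$ through the cubic relation to certify total ramification at $\wp_7'$ and none at $\wp_7$'' is where all the work lies, and the tool you name cannot do it. The relation $z-8\cong\wp_7'^2$ tells you only that $v_{\wp_7}(z-8)=0$, and the valuation of a single element of the \emph{base} field $\Omega_f$ says nothing about whether $\wp_7$ ramifies in the cubic extension $\Omega_f(h)/\Omega_f$; unramifiedness is a statement about the different, i.e.\ about the discriminant of the minimal polynomial of $h$. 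Likewise $v_{\wp_7'}(z-8)=2$ does not by itself force ramification index $3$ at $\wp_7'$ (and the paper never proves that directly: it only needs unramifiedness at $\wp_7$, after which the bound $e\ge 6$ for primes over $7$ in $\Sigma_7\Omega_f/\Omega_f$ caps the inertia field of $\wp_7$ at degree $3$ over $\Omega_f$, so that it must equal both $\Omega_f(h)$ and $\Sigma_{\wp_7'}\Omega_f$).

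What is actually needed at this point is the computation
$$\textrm{disc}\big(x^3-3x+1-z\,x(x-1)\big)=(z^2-3z+9)^2, \qquad z^2-3z+9=(z-8)(z+5)+49,$$
so that $\wp_7\nmid \textrm{disc}$ if and only if $z\not\equiv -5$ (mod $\wp_7$). Proving this last non-congruence requires two inputs absent from your sketch: the congruence $j(w/7)\equiv z+5$ (mod $\mathfrak{p}$) for $\mathfrak{p}\mid\wp_7$, read off from (\ref{eqn:7}), and the fact that $j(w/7)\not\equiv 0$ (mod $\mathfrak{p}$) --- which in turn uses Deuring's theorem that $7\nmid\textrm{disc}\,H_{-d}$ together with the observation that $j(w/7)\equiv 0$ would force $\wp_7\sim 1$ in the ring class group, hence $4\cdot 7=x^2+lf^2y^2$ and $l=19$, a case eliminated because $7\nmid 96^3$. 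Without a substitute for this chain, the asymmetry between $\wp_7$ and $\wp_7'$ in your argument is asserted rather than proved. (A smaller slip: the inertia group of $\wp_7$ in $\textrm{Gal}(\Sigma_7\Omega_f/K)$ is the image of $(R_K/\wp_7)^*$ and has order $6$, not $3$; its fixed field is still $\Sigma_{\wp_7'}\Omega_f$, so your closing sentence stands once the ramification claim is actually established.)
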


\begin{proof}
First, $h(\tau)$, for $\tau = w/7$, lies in $\Sigma_7 \Omega_2$, by a result of S\"ohngen \cite{sohn}.  See also \cite{cho} and \cite[p. 315]{co}.  Now, $[\Sigma_7\Omega_f:\Omega_f] = [\Sigma_7:\Sigma] = \frac{1}{2} \varphi(\wp_7) \varphi(\wp_7') = 18$ and $[\Sigma_{\wp_7'}\Omega_f:\Omega_f] = [\Sigma_{\wp_7'}:\Sigma] = \frac{1}{2}\varphi(\wp_7') = 3$.  Note that the discriminant of the cubic in (\ref{eqn:9}) is
$$\textrm{disc}(x^3-3x+1-z(w/7)x(x-1)) = (z^2-3z+9)^2, \ \ z = z(w/7).$$
Furthermore,
$$z^2-3z+9 = (z-8)^2+13(z-8)+49=(z-8)(z+5)+49.$$
Since $z-8 = \left(\frac{\eta(w/7)}{\eta(w)}\right)^4 \cong \wp_7'^2$, it follows that $z^2-3z+9 \cong \wp_7'^2 \mathfrak{a}$, for some ideal $\mathfrak{a}$ in $R_{\Omega_f}$.  Furthermore, by (\ref{eqn:7}),
$$j(\tau) \equiv \frac{(z+5)(z+6)^7}{(z+6)^7} \equiv z+5 \ (\textrm{mod} \ \mathfrak{p}), \ \ \mathfrak{p} \mid \wp_7 \ \textrm{in} \ \Omega_f.$$
Hence, the minimal polynomial $m_d(x)$ of $z=z(\tau)$ over $K$ satisfies
$$m_d(x) \equiv H_{d_K}(x+5) \ (\textrm{mod} \ \wp_7).$$
It follows that $m_d(x)$ factors modulo $\wp_7$ into polynomials of degree $\textrm{ord}(\wp_7)$, the order of $\wp_7$ in the ring class group $\mathcal{C}_f$ mod $f$ of $K$ corresponding to the order $\mathcal{O} = \textsf{R}_{-d}$ of discriminant $-d = d_K f^2$.  This is because $7$ does not divide the discriminant of $H_{-d}(x)$, by Deuring's results \cite{d4}. If $z+5$ is divisible by a prime divisor $\mathfrak{p}$ of $\wp_7$ in $\Omega_f$, then $j(\tau) \equiv 0$ (mod $\mathfrak{p}$) would imply that $\textrm{ord}(\wp_7)=1$, so that $\wp_7 \sim 1$ in $\mathcal{C}_f$.  Then $4\cdot 7 = x^2+ly^2$ implies $l=19$ (assuming $l>3$).  However, in the latter case, $j(\tau) = -96^3$, which is not divisible by $7$.  Therefore, if $l >3$, $(z+5)$ is relatively prime to $\wp_7$, so that $\wp_7 \nmid \mathfrak{a}$.  \medskip

It follows that $\Omega_f(h(\tau))/\Omega_f$ is unramified over $\wp_7$ and ramified over $\wp_7'$ (since some prime divisor of $7$ must ramify in the subfield $\Omega_f(h(\tau)) \subset \Sigma_7 \Omega_f$).  Thus, the inertia field of $\wp_7$ over $\mathbb{Q}$ contains $\Omega_f(h(\tau))$ and has degree at least $3$ over $\Omega_f$.  However, the prime divisors of $7$ in $\Sigma_7\Omega_f$ have ramification index $\ge 6$ over $\Omega_f$, since $\mathbb{Q}(\zeta_7) \subset \Sigma_7$.  From $[\Sigma_7\Omega_f: \Omega_f] = 18$ it follows that $\wp_7$ is purely ramified in $\Sigma_7\Omega_f/\Sigma_{\wp_7'}\Omega_f$.  Hence, the inertia field of $\wp_7$ in $\Sigma_7\Omega_f$ equals $\Sigma_{\wp_7'}\Omega_f$, and consequently $\Omega_f(h(\tau)) = \Sigma_{\wp_7'}\Omega_f$.  This proves the theorem.
\end{proof}

\begin{thm} If $d= l$ or $4l$, $\left(\frac{-l}{7}\right) = +1$, and $w = \frac{v+\sqrt{-d}}{2}$ is divisible by $\wp_7^2$, then 
$$\mathbb{Q}(h(-1/w)) = \Sigma_{\wp_7} \Omega_f.$$
\label{thm:6}
\end{thm}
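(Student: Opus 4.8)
The plan is to derive the stated equality as the mirror image of Theorem~\ref{thm:5} under the interchange of the two primes $\wp_7$ and $\wp_7'$ above $7$. The key input is the identity $d(\tau)=h(-1/(7\tau))=T_1(h(\tau))$ from Section~4: setting $\tau=w/7$ gives $h(-1/w)=T_1(h(w/7))$. Since the coefficients of $T_1$ lie in $k=\mathbb{Q}(\zeta_7)\subseteq\Sigma_7$ and $h(w/7)\in\Sigma_{\wp_7'}\Omega_f\subseteq\Sigma_7\Omega_f$ by Theorem~\ref{thm:5}, we get $h(-1/w)\in\Sigma_7\Omega_f$. Next, the computation in Lemma~\ref{lem:2} shows $z(-1/w)=z_1$, where $z_1-8=49/(z-8)\cong\wp_7^2$ and $z_1$ is a $\mathbb{Q}$-conjugate of $z=z(w/7)$; as $\Omega_f/\mathbb{Q}$ is Galois this yields $\mathbb{Q}(z_1)=\Omega_f$. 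Because $z_1=\frac{h(-1/w)^3-3h(-1/w)+1}{h(-1/w)(h(-1/w)-1)}\in\mathbb{Q}(h(-1/w))$, we conclude $\Omega_f\subseteq\mathbb{Q}(h(-1/w))$, so that $\mathbb{Q}(h(-1/w))=\Omega_f(h(-1/w))$ is the cyclic cubic extension of $\Omega_f$ defined by $x^3-3x+1-z_1x(x-1)=0$, the irreducibility of this cubic for $l>3$ following exactly as in the proof of Theorem~\ref{thm:4} applied to the conjugate $z_1$.

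The discriminant of this cubic is $(z_1^2-3z_1+9)^2$, and writing $z_1^2-3z_1+9=(z_1-8)(z_1+5)+49$ together with $z_1-8\cong\wp_7^2$ and $49\cong\wp_7^2\wp_7'^2$ gives $z_1^2-3z_1+9\cong\wp_7^2\mathfrak{b}$. The main obstacle is to show that $\wp_7'\nmid(z_1^2-3z_1+9)$, i.e.\ $z_1+5\not\equiv 0\pmod{\wp_7'}$, so that among the primes above $7$ only $\wp_7$ divides the discriminant. This is precisely the $\wp_7'$-analogue of the corresponding step in the proof of Theorem~\ref{thm:5}: reducing the minimal polynomial $m_d$ of $z_1$ modulo $\wp_7'$ gives $m_d(x)\equiv H_{d_K}(x+5)\pmod{\wp_7'}$ by~(\ref{eqn:7}), which factors into irreducibles of degree $\textrm{ord}(\wp_7')=\textrm{ord}(\wp_7)$; were $z_1+5$ divisible by a prime divisor of $\wp_7'$, then $j\equiv 0$ there would force $\wp_7'\sim 1$, whence the norm equation $4\cdot 7=x^2+lf^2y^2$ forces $l=19$, in which case $j=-96^3$ is prime to $7$, a contradiction.

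It remains to run the inertia argument of Theorem~\ref{thm:5} with $\wp_7$ and $\wp_7'$ interchanged. Since $\mathbb{Q}(\zeta_7)\subseteq\Sigma_7$, the prime divisors of $7$ in $\Sigma_7\Omega_f$ have ramification index at least $6$ over $\Omega_f$, and $[\Sigma_7\Omega_f:\Omega_f]=18$; hence $\wp_7'$ is purely ramified in $\Sigma_7\Omega_f/\Sigma_{\wp_7}\Omega_f$, and the inertia field of $\wp_7'$ in $\Sigma_7\Omega_f$ is exactly the degree-$3$ field $\Sigma_{\wp_7}\Omega_f$. By the discriminant computation, $\Omega_f(h(-1/w))/\Omega_f$ is unramified at $\wp_7'$, so it is contained in this inertia field; being itself a nontrivial cubic subextension of $\Sigma_7\Omega_f/\Omega_f$ it must ramify somewhere above $7$, necessarily at $\wp_7$. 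Comparing degrees gives $\mathbb{Q}(h(-1/w))=\Omega_f(h(-1/w))=\Sigma_{\wp_7}\Omega_f$, as claimed.
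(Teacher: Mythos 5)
Your argument is correct, but it takes a genuinely different route from the paper. The paper's own proof is much shorter: by Theorem \ref{thm:4}, $P_d(T_1(x))$ is a unit multiple of $P_d(x)$, so $h(-1/w)=T_1(h(w/7))$ is a root of the irreducible polynomial $P_d$; hence $\mathbb{Q}(h(-1/w))$ is a $\mathbb{Q}$-conjugate of $\mathbb{Q}(h(w/7))=\Sigma_{\wp_7'}\Omega_f$, and since $K$ and $\Omega_f$ are normal over $\mathbb{Q}$ the only candidates are $\Sigma_{\wp_7'}\Omega_f$ and $\Sigma_{\wp_7}\Omega_f$; the first is excluded because $h(-1/w)$ is linear fractional in $r_1$ with nonzero determinant $h(w/7)^2-h(w/7)+1$, so $h(-1/w)\in\Sigma_{\wp_7'}\Omega_f$ would force $r_1\in\Sigma_{\wp_7'}\Omega_f$. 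You instead re-run the entire ramification analysis of Theorem \ref{thm:5} with $\wp_7$ and $\wp_7'$ interchanged, using $z(-1/w)=z_1$ and $(z_1-8)\cong\wp_7^2$ from Lemma \ref{lem:2} to show $\wp_7'\nmid(z_1^2-3z_1+9)$ and then identifying $\Omega_f(h(-1/w))$ with the inertia field of $\wp_7'$ in $\Sigma_7\Omega_f$. This is longer and duplicates work, but it buys something the paper's proof does not state explicitly: it exhibits $\Sigma_{\wp_7}\Omega_f$ intrinsically as the inertia field of $\wp_7'$, rather than merely as ``the other conjugate,'' and it does not need the irreducibility of $P_d$ over $\mathbb{Q}$ (only of the cubic over $\Omega_f$). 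Two small points you should make explicit: the congruence $m_d(x)\equiv H_{-d}(x+5)\pmod{\wp_7'}$ for the minimal polynomial of $z_1$ over $K$ is legitimate precisely because every $\mathrm{Gal}(\Omega_f/K)$-conjugate of $z_1$ satisfies $(z_1^\sigma-8)=\wp_7^2$, prime to $\wp_7'$, so that $(\ref{eqn:7})$ reduces cleanly at every prime above $\wp_7'$; and the value of $j$ appearing in your $l=19$ exclusion is $J_1(z_1)=j(-1/w)=j(w)=j(R_K)$, the invariant of the principal class, which for $l=19$ equals $-96^3$ and is indeed prime to $7$.
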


\begin{proof} We use the relation $h(\frac{-1}{7\tau}) = T_1(h(\tau))$ with $\tau = w/7$ to deduce that
\begin{equation}
h(-1/w) = T_1(h(w/7)) = \frac{h(w/7)-r_1}{(1-r_1)h(w/7)-1}.
\label{eqn:9.5}
\end{equation}
Furthermore, Theorem \ref{thm:4} shows that
$$P_d(h(-1/w)) = P_d(T_1(h(w/7))) = P_d(h(w/7)) = 0.$$
Hence, the map $\sigma: h(w/7) \rightarrow h(-1/w)$ induces an isomorphism of $\Sigma_{\wp_7'}\Omega_f = \mathbb{Q}(h(w/7))$ with $\mathbb{Q}(h(-1/w))$.  The field $\Omega_f$ is normal over $\mathbb{Q}$, as is $K = \mathbb{Q}(\sqrt{-l})$, which implies that $\sigma$ either takes $\Sigma_{\wp_7'}$ to itself or to $\Sigma_{\wp_7}$.  However,
$$h(-1/w) = \frac{-r_1+h(w/7)}{-h(w/7)r_1+h(w/7)-1}$$
is linear fractional in $r_1$ with determinant $h^2(w/7)-h(w/7)+1 \neq 0$, so that $r_1 \notin \Sigma_{\wp_7'}\Omega_f$ implies that $h(-1/w) \notin \Sigma_{\wp_7'}\Omega_f$.  This implies that  $\mathbb{Q}(h(-1/w)) = \Sigma_{\wp_7} \Omega_f$, by Theorem \ref{thm:5}.
\end{proof}

From the end of Section 4 we know that $f_7(h(\tau),j_7^*(\tau)) = 0$.  Solving for $j_7^*(\tau)$ in this equation gives
$$j_7^*(\tau) = \frac{(h^2-h+1)^3}{h(h-1)(h^3-8h^2+5h+1)} = \Phi(h), \ \ h = h(\tau),$$
and therefore
$$j_7^*(w/7) =  \frac{(\eta^2-\eta+1)^3}{\eta(\eta-1)(\eta^3-8\eta^2+5\eta+1)} = \Phi(\eta), \ \ \eta = h(w/7).$$
Since the expression $\Phi(\eta)$ is invariant under the substitution $\eta \rightarrow \phi(\eta) = \frac{1}{1-\eta}$, which represents a non-trivial automorphism of $\Sigma_{\wp_7'} \Omega_f/\Omega_f$, it follows that
$$j_7^*(w/7) \in \Omega_f.$$
$\Phi(\eta)$ is also invariant under the isomorphism
$$\psi: h(w/7) \rightarrow h(-1/w) = T_1(h(w/7)),$$
which maps $\Omega_f$ to itself and takes $z(w/7)$ to
\begin{equation}
\psi(z(w/7)) = A(z(w/7)) = \frac{8z(w/7)-15}{z(w/7)-8}.
\label{eqn:n1}
\end{equation}
The fixed points of the linear fractional map $A(z)$ are $z = 1, 15$, so $A(z(w/7)) \neq z(w/7)$, by Theorem \ref{thm:3}.  The mapping $\psi$ therefore induces an automorphism of order $2$ on $\Omega_f$.  This implies that $j_7^*(w/7)$ lies in a field $L_f \subset \Omega_f$ over which $\Omega_f$ has degree $2$. \medskip

Note that if $l \equiv 6$ (mod $7$), then
$$\sqrt{-l} \equiv \pm 1 \ (\textrm{mod} \ \wp_7'),$$
so that $\mathfrak{l} =(\sqrt{-l})$ lies in the principal ray class (mod $\wp_7'$).  It also lies in the principal ring class mod $2$ if $l \equiv 3$ (mod $4$).  Therefore, it splits completely in $\Sigma_{\wp_7'}\Omega_f/K$.  Hence, the minimal polynomial $P_d(x)$ of $h(w/7)$ over $\mathbb{Q}$ splits into linear factors modulo $\mathfrak{l}$. \medskip

On the other hand, if if $l \equiv 3,5$ (mod $7$), then $-l \equiv 2, 4$ (mod $7$) and
$$\sqrt{-l} \equiv \pm 3, \pm 2 \ (\textrm{mod} \ \wp_7').$$
Since $3^3 \equiv -1$ and $2^3 \equiv 1$ (mod $7$), the ideal $\mathfrak{l}$ has order $3$ in the ray class group mod $\wp_7'$. Hence, $\mathfrak{l}$ splits into primes of relative (and absolute) degree $3$ in $\Sigma_{\wp_7'}\Omega_f/K$, while it splits completely in $\Omega_f/K$. \medskip

In either case, the prime divisors $\mathfrak{q}$ of $l$ in the extension $L_f/\mathbb{Q}$ have degree 1.  This yields the following.  (In this theorem $R_{L}$ denotes the ring of integers in the field $L$.)

\begin{thm} Let $l > 3$ be a prime satisfying $\left(\frac{-l}{7}\right) = +1$.  If $\mathfrak{q}$ is any prime divisor of $l$ in the field $L_f$ which is the fixed field of the automorphism
$\psi |_{\Omega_f}$ defined by (\ref{eqn:n1}), the reduced value $j_7^*(w/7)$ mod $\mathfrak{q}$ is a supersingular invariant for the Fricke group $\Gamma_0^*(7)$ lying in $\mathbb{F}_l = R_{L_f}/\mathfrak{q}$.
\label{thm:n1}
\end{thm}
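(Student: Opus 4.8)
The plan is to reduce, modulo a prime above $l$, the modular relation connecting $j(w)$ and $j_7^*(w/7)$, and then to identify the reduced $j$-value as supersingular via Deuring's reduction theorem. First I would invoke the relation from the Introduction (equivalently, from the resultant computation at the end of Section 4): both $X = j(\tau)$ and $X = j(7\tau)$ are roots of $R_7(X, j_7^*(\tau)) = 0$. Specializing $\tau = w/7$ and using $j(7\tau) = j(w)$ gives
$$R_7\big(j(w),\, j_7^*(w/7)\big) = 0.$$
Here $j(w)$ is a singular modulus: since $w \in \mathcal{O} = \textsf{R}_{-d}$ with $-d = d_K f^2$, the value $j(w)$ is a root of the class equation $H_{-d}(X)$, an algebraic integer generating $\Omega_f$ over $K$. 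Moreover, from the identity $z + A(z) = j_7^*(\tau) + 3 = \frac{z^2 - 15}{z-8}$ of Section 4 I obtain
$$j_7^*(w/7) = \frac{z^2 - 3z + 9}{z - 8}, \qquad z = z(w/7) \in \Omega_f.$$

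Next I would set up the reduction. Fixing a prime $\mathfrak{Q}$ of $\Omega_f$ above the given prime $\mathfrak{q}$ of $L_f$, I would check that the identity above is one between $\mathfrak{Q}$-integral quantities. That $j(w)$ is an algebraic integer is classical. For $j_7^*(w/7)$, note by (\ref{eqn:8}) that the principal ideal $(z-8)$ is a power of $\wp_7'$, hence prime to $l$; thus $z-8$ is a unit at $\mathfrak{Q} \mid l$ (as $l \neq 7$), and since $z$ is integral, $j_7^*(w/7) = (z^2-3z+9)/(z-8)$ is $\mathfrak{Q}$-integral. Reducing the integral identity modulo $\mathfrak{Q}$ then yields
$$R_7\big(\,\overline{j(w)},\, \overline{j_7^*(w/7)}\,\big) = 0 \quad \text{in} \ R_{\Omega_f}/\mathfrak{Q}.$$

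The decisive step is to apply Deuring's reduction theorem to $\overline{j(w)}$. The elliptic curve with invariant $j(w)$ has complex multiplication by $\mathcal{O} \subset K = \mathbb{Q}(\sqrt{-l})$; since $l \mid d_K$ ramifies in $K$ and $l \nmid f$ (because $f \le 2 < l$), the reduction $\overline{j(w)}$ is a supersingular $j$-invariant in characteristic $l$ and therefore lies in $\mathbb{F}_{l^2}$. By the very definition of the supersingular invariants for $\Gamma_0^*(7)$, the relation $R_7(\overline{j(w)}, \overline{j_7^*(w/7)}) = 0$ with $\overline{j(w)} \in \mathbb{F}_{l^2}$ supersingular exhibits $\overline{j_7^*(w/7)}$ as such an invariant. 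Finally, because $j_7^*(w/7) \in L_f$ and $\mathfrak{q} = \mathfrak{Q} \cap R_{L_f}$ has residue degree $1$ over $\mathbb{Q}$ in both cases $l \equiv 6$ and $l \equiv 3, 5 \pmod 7$ treated just before the theorem, the residue field $R_{L_f}/\mathfrak{q}$ equals $\mathbb{F}_l$, so the reduction $\overline{j_7^*(w/7)}$ indeed lies in $\mathbb{F}_l$.

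I expect the main obstacle to lie not in any single hard computation but in keeping the reductions mutually compatible: one must verify the $\mathfrak{Q}$-integrality of $j_7^*(w/7)$ (which comes down to the fact that $z-8$ is prime to $l$) and must invoke Deuring's theorem in its correct ramified form, ensuring good reduction of the CM curve at $\mathfrak{Q}$. Once these two points are secured, the supersingularity of $\overline{j(w)}$, and hence the identification of $\overline{j_7^*(w/7)}$ as a Fricke invariant over $\mathbb{F}_l$, follows directly from the reduced relation $R_7(\overline{j(w)}, \overline{j_7^*(w/7)}) = 0$.
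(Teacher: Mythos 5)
Your proposal is correct and follows essentially the same route as the paper: reduce the relation $R_7(\,\cdot\,, j_7^*(w/7))=0$ modulo a prime of $\Omega_f$ above $l$, observe that the CM value of $j$ reduces to a supersingular invariant because $l$ ramifies in $K=\mathbb{Q}(\sqrt{-l})$, and invoke the definition of the supersingular invariants for $\Gamma_0^*(7)$ together with the degree-one property of the primes of $L_f$ over $l$. The only cosmetic differences are that you work with $j(w)=j(7\tau)$ where the paper uses $j(w/7)=j(\tau)$ (both are roots of the same quadratic $R_7(X,j_7^*(w/7))=0$), and you spell out the $\mathfrak{Q}$-integrality of $j_7^*(w/7)$ via $(z-8)\cong\wp_7'^2$, which the paper leaves implicit.
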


\begin{proof} This follows from the above discussion and the fact that
$$R_7(j(w/7), j_7^*(w/7)) = 0 \ \Rightarrow \ R_7(j(w/7), j_7^*(w/7)) \equiv 0 \ (\textrm{mod} \ \mathfrak{p}),$$
where $\mathfrak{p} \mid \mathfrak{q}$ in $\Omega_f$.  Since $\mathfrak{p} \mid l$ and $l$ is ramified in $K$, the reduced value $j(w/7)$ mod $\mathfrak{p}$ is a root of $ss_p(X)$ in $\mathbb{F}_l = R_{\Omega_f}/\mathfrak{p} \cong R_{L_f}/\mathfrak{q}$.  The definition in the introduction shows that $j_7^*(w/7)$ mod $\mathfrak{q}$ must be a supersingular invariant for $\Gamma_0^*(7)$.
\end{proof}

It can be shown that when $l = 41$ and $w = 29 + \sqrt{-41}$, the minimal polynomial of $j_7^*(w/7) \in L_1 \subset \Omega_1 = \Sigma$ over $\mathbb{Q}$ is
\begin{align*}
\Psi_7(x) & = x^8 - 464x^7 + 125056x^6 - 5914240x^5 + 78235776x^4 + 38845440x^3\\
& \ \  + 527245312x^2 + 14082048x + 20123648\\
& \equiv (x + 1)(x + 14)(x + 8)^2(x + 29)^2(x + 31)^2 \ (\textrm{mod} \ 41).
\end{align*}
On the other hand (see \cite[Table 3]{mor3}),
\begin{align*}
ss_l&^{(7*)}(Y) \equiv Y(Y + 1)(Y + 8)(Y + 12)(Y + 13)(Y + 14)(Y + 17)(Y + 29)\\
& \times (Y + 31)(Y + 33)(Y + 39)(Y^2 + Y + 18)(Y^2 + 37Y + 26) \ \ (\textrm{mod} \ 41).
\end{align*}
Hence, the reduced values of $j_7^*(w/7)$ (mod $\mathfrak{q}$) in Theorem \ref{thm:n1} do not account for all of the supersingular values $j_7^* \in \mathbb{F}_l$.  In the sequel we will see that the remaining supersingular values in $\mathbb{F}_l$ arise as reductions of values of $j_7^*(\tau)$, where $\tau \in K' = \mathbb{Q}(\sqrt{-7l})$.

\section{Proof of Conjecture 1.}

\begin{prop} Assume $l > 7$ is a prime with $l \equiv 6$ (mod $7$). \smallskip

\noindent (a) If $l \equiv 2$ (mod $3$), the polynomial
\begin{equation}
\label{eqn:10} f_0(x) = (x^2-x+1)(x^6-11x^5+30x^4-15x^3-10x^2+5x+1)
\end{equation}
has exactly $6$ distinct linear factors modulo $l$. \smallskip

\noindent (b) If $l \equiv 3$ (mod $4$), the polynomial
\begin{align} 
\label{eqn:11} f_{1728}(x) =& x^{12}-18x^{11}+117x^{10}-354x^9+570x^8-486x^7+273x^6\\
\notag & -222x^5+174x^4-46x^3-15x^2+6x+1
\end{align}
has exactly $6$ distinct linear factors modulo $l$.
\label{prop:2}
\end{prop}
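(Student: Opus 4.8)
The common thread in both parts is that the roots of $f_0$ and $f_{1728}$ are exactly the fibers of $a\mapsto j_7(a)$ over the supersingular values $j=0$ and $j=1728$, and these fibers are stable under the order-$3$ map $\phi(x)=1/(1-x)$, which lies in $\mathrm{PGL}_2(\mathbb{F}_l)$ since it has rational coefficients. The two fixed points of $\phi$ are the roots of $x^2-x+1$ (where $j_7=0$), so $\phi$ fixes exactly those two roots of $f_0$ and permutes the remaining six freely, while it acts freely on all twelve roots of $f_{1728}$. Hence over $\overline{\mathbb{F}}_l$ each polynomial is a product of $\phi$-invariant cubics $g(x,c)=x^3+cx^2-(c+3)x+1$ (the generic form of an irreducible cubic factor of $\hat H_{7,l}$, by \cite[Thm. 5.3]{mor1}), together with $x^2-x+1$ in the case of $f_0$. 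The decisive point is that a root $\alpha\in\mathbb{F}_l$ of such a polynomial drags its whole $\phi$-orbit into $\mathbb{F}_l$; so the number of linear factors equals $3$ times the number of cubic blocks $g(x,c)$ whose parameter $c$ lies in $\mathbb{F}_l$, \emph{provided each such block splits}. That each $\mathbb{F}_l$-rational block does split is forced by Theorem \ref{thm:1}: for $l\equiv 6\pmod 7$ the Hasse invariant has no irreducible cubic factor, and a $\phi$-invariant cubic in $\mathbb{F}_l[x]$ cannot factor as (linear)$\cdot$(irreducible quadratic) because a single rational root pulls in its two $\phi$-conjugates. Thus an $\mathbb{F}_l$-rational block is either irreducible (excluded) or totally split.

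For part (a) I would compute the factorization $x^6-11x^5+30x^4-15x^3-10x^2+5x+1=g(x,b_1)g(x,b_2)$ by matching coefficients, obtaining $b_1+b_2=-11$, $b_1b_2=25$, so $b_{1,2}=\tfrac12(-11\pm\sqrt{21})$. Under the hypotheses $l\equiv 6\pmod 7$ and $l\equiv 2\pmod 3$ (the latter being $\left(\frac{-3}{l}\right)=-1$, the condition $r=1$ that places $f_0$ inside $\hat H_{7,l}$), quadratic reciprocity gives $\left(\frac{3}{l}\right)=\left(\frac{7}{l}\right)=-\left(\frac{-1}{l}\right)$, whence $\left(\frac{21}{l}\right)=1$ and $b_1,b_2\in\mathbb{F}_l$. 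Both blocks then split completely by the previous paragraph, while $x^2-x+1$ is irreducible (discriminant $-3$ a nonresidue) and contributes nothing. Since $b_1\ne b_2$ (as $l>7$) and the two blocks carry the distinct values $j_7^*=-\tfrac{b_i^2+3b_i+9}{b_i+8}$, which one checks to be the two roots of $Y^2+224Y+448$, the six roots are distinct, giving exactly $6$ linear factors.

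For part (b) the same bookkeeping reduces everything to the four parameters $c_1,\dots,c_4$ of the cubic blocks of $f_{1728}$. Matching the symmetric functions of $\prod_{i=1}^4 g(x,c_i)$ against the coefficients of $f_{1728}$ (I would verify the constant, linear, quadratic, and top coefficients as a check) shows that the $c_i$ are the roots of
\[ Q(T)=T^4+18T^3+111T^2+298T+393, \]
so the count of linear factors is $3\cdot\#\{i:c_i\in\mathbb{F}_l\}$ and the task is to show $Q$ has exactly two roots in $\mathbb{F}_l$ when $l\equiv 6\pmod 7$ and $l\equiv 3\pmod 4$. This is a Frobenius cycle-type computation. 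The resolvent cubic of $Q$ factors as $(z-58)(z^2-53z+718)$, the quadratic having discriminant $-63=-9\cdot 7$; since $Q$ is already irreducible mod $5$, its Galois group is $D_4$, with $\mathrm{Fix}(V_4)=\mathbb{Q}(\sqrt{-7})$ (the splitting field of the resolvent) and the other two quadratic subfields $\mathbb{Q}(i)$ and $\mathbb{Q}(\sqrt7)$ (discriminants $-4,28$, whose product with $-7$ is a square). For $l\equiv 6\pmod 7$ one has $\left(\frac{-7}{l}\right)=-1$, so the resolvent is (linear)$\cdot$(irreducible quadratic) mod $l$ and Frobenius lies in $D_4\setminus V_4$; for $l\equiv 3\pmod 4$ the prime $l$ is inert in $\mathbb{Q}(i)$. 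Granting that $\mathbb{Q}(i)=\mathrm{Fix}(C_4)$, this inertness puts Frobenius outside $C_4$, and the only elements of $D_4$ lying outside both $V_4$ and $C_4$ are the reflections $(13),(24)$ of cycle type $2\cdot 1^2$. Hence $Q$ has exactly two $\mathbb{F}_l$-roots, each yielding a split cubic block, for a total of exactly $6$ linear factors; the remaining two blocks, with $\mathbb{F}_l$-conjugate parameters, produce only quadratic factors by Theorem \ref{thm:1}.

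The routine parts are the coefficient matching, the separability checks (automatic since $l>7$ does not divide the relevant discriminants), and the reciprocity computations. The genuine obstacle is the last step of part (b): identifying which quadratic subfield of the $D_4$-extension is the fixed field of the cyclic subgroup $C_4$, i.e.\ proving it is precisely the CM field $\mathbb{Q}(i)$ of $j=1728$ (equivalently, ruling out $G=C_4$ and confirming that it is the mod-$4$ condition, rather than the $\mathbb{Q}(\sqrt7)$ condition, that separates the $4$-cycle case from the transposition case). I would settle this either by the Kappe--Warren criterion applied to $Q$ over $\mathbb{Q}(\sqrt{-7})$, or---more in keeping with the rest of the paper---by interpreting the $c_i$ through the $j_7^*$-values lying over $j=1728$ and invoking the class-field description of those values to tie the $C_4$-field to $\mathbb{Q}(i)$.
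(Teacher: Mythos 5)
Your part (a) is correct and complete, though it takes a different route from the paper: you split the sextic into the two $\phi$-invariant blocks $g(x,b_i)$ with $b_i=\tfrac12(-11\pm\sqrt{21})$ and show $\sqrt{21}\in\mathbb{F}_l$ by reciprocity, whereas the paper observes that a root of the sextic generates $\mathbb{Q}(\zeta_{21})^+$ and that $l\equiv 20\pmod{21}$ splits completely there. Both work; your version has the small advantage of making the block structure explicit, the paper's of being one line. Your appeal to Theorem \ref{thm:1} to force each $\mathbb{F}_l$-rational block to split (irreducible cubics are excluded for $l\equiv 6\pmod 7$, and linear-times-quadratic is excluded by the $\phi$-orbit argument) is sound, provided you note that $f_0$ really does divide $\hat H_{7,l}$ here (it does: $r_l=1$ exactly when $l\equiv 2\pmod 3$) and that no root of $g(x,b_i)$ is a fixed point of $\phi$ (guaranteed since $\left(\frac{-3}{l}\right)=-1$).

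In part (b) there is a genuine gap, and it sits exactly where you flagged it: your Frobenius argument needs $\mathrm{Fix}(C_4)=\mathbb{Q}(i)$, and without that identification the condition $l\equiv 3\pmod 4$ does not by itself exclude the $4$-cycle case, so you cannot conclude that exactly two of the $c_i$ lie in $\mathbb{F}_l$. The fact is true and fillable within your framework: $Q$ has exactly two real roots (the two factors $z^2-(9\pm 2\sqrt 7)z+29\pm 8\sqrt 7$ of $q$ have discriminants $-7\pm 4\sqrt 7$ of opposite sign), so the quartic field generated by a real root is real; its unique quadratic subfield is $\mathrm{Fix}(W)$ for $W$ the non-normal... rather, the Klein subgroup containing the point stabilizer, hence $\mathrm{Fix}(W)=\mathbb{Q}(\sqrt 7)$, and since $\mathrm{Fix}(V_4)=\mathbb{Q}(\sqrt{\mathrm{disc}\,Q})=\mathbb{Q}(\sqrt{-7})$, the remaining subfield $\mathrm{Fix}(C_4)$ must be $\mathbb{Q}(i)$. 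The paper avoids this issue entirely: starting from the same resolvent $q(z)=z^4-18z^3+111z^2-298z+393$ it uses the identity $q(z)=(z^2-9z+29)^2-28(z-4)^2$ to write $f_{1728}=C_+C_-$ over $\mathbb{Q}(\sqrt 7)\subset\mathbb{F}_l$, computes $\mathrm{disc}(C_\pm)=2^6 3^6 7^2(21\pm 8\sqrt 7)$ with $(21+8\sqrt 7)(21-8\sqrt 7)=-7$ a non-residue, and then the Pellet--Stickelberger--Voronoi parity theorem together with the $\phi$-orbit dichotomy (each sextic $C_\pm$ is either six linear factors or three quadratics, an even versus an odd number of irreducible factors) forces exactly one of $C_\pm$ to split completely. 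That parity argument replaces the entire $D_4$ subfield analysis, so you may prefer it; alternatively, completing your version with the real-root observation above is perfectly adequate.
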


\begin{proof} (a) A root of the $6$-th degree factor $s(x)$ of $f_0(x)$ generates the real subfield $F^+ \subset F = \mathbb{Q}(\zeta_{21})$ of $21$-st roots of unity, since $s(\alpha) = 0$ with
$$\alpha = \xi^5 - \xi^4 - 5\xi^3 + 4\xi^2 + 4\xi, \ \ \xi = e^{2\pi i/21}+e^{-2\pi i/21}.$$
Note that the minimal polynomial of $\xi$ is
$$m(x) = x^6 - x^5 - 6x^4 + 6x^3 + 8x^2 - 8x + 1.$$
Now since $l \equiv 20$ (mod $21$), $l$ splits completely in $F^+$, and since
$$\textrm{disc}(x^6-11x^5+30x^4-15x^3-10x^2+5x+1) = 2^{12} \cdot 3^3 \cdot 7^5,$$
the polynomial $s(x)$ splits into distinct linear factors (mod $l$).  On the other hand, $x^2-x+1$ remains irreducible mod $l$, since $\left(\frac{-3}{l}\right) = -1$.  This proves the assertion. \smallskip

\noindent (b) First note that
\begin{align*}
f_{1728}(x) &= x^4(x-1)^4 q\left(\frac{x^3-3x+1}{x(x-1)}\right),\\
q(z) & = z^4 - 18z^3 + 111z^2 - 298z + 393.
\end{align*}
We use the fact that
$$q(z) = (z^2 - 9z + 29)^2-28(z-4)^2.$$
Putting $z=\frac{x^3-3x+1}{x(x-1)}$ and multiplying by $x^4(x-1)^4$ in the last expression gives that
\begin{align*}
f_{1728}(x) &= (x^6 - 9x^5 + 32x^4 - 29x^3 + 2x^2 + 3x + 1)^2\\
& \ \ \ -28x^2(x-1)^2(x^3 - 4x^2 + x + 1)^2\\
&= A(x)^2 -28x^2(x-1)^2B(x)^2;
\end{align*}
where the roots of the polynomials
\begin{align*}
A(x) & = x^6 - 9x^5 + 32x^4 - 29x^3 + 2x^2 + 3x + 1,\\
B(x) & = x^3 - 4x^2 + x + 1
\end{align*}
are invariant under the action of $x \rightarrow \phi(x) = \frac{1}{1-x}$:
\begin{equation*}
(1-x)^6A(\phi(x)) = A(x), \ \ (1-x)^3B(\phi(x)) = -B(x).
\end{equation*}
Thus, $f_{1728}(x)$ splits into the product of the factors
$$C_{\pm}(x) = A(x) \pm 2\sqrt{7}x(x-1)B(x),$$
where
$$\textrm{disc}_x(A(x)-2\sqrt{7}x(x-1)B(x)) = 2^6 \cdot 3^6 \cdot 7^2 \cdot (21+8\sqrt{7}),$$
and
$$(1-x)^6C_{\pm}(\phi(x)) = C_{\pm}(x).$$
Now $\sqrt{7} \in \mathbb{F}_l$ since $\left(\frac{-7}{l}\right) = -1$, and
$$(21+8\sqrt{7})(21-8\sqrt{7}) = -7.$$
It follows that the discriminant of exactly one of the factors $C_{\pm}(x)$ is a square in $\mathbb{F}_l$.  The Pellet-Stickelberger-Voronoi (PSV) theorem (see \cite[p. 485]{h}, \cite[pp.153-155]{nar}, \cite[Appendix]{brm}) implies that one of the factors, say $C_{-}(x)$, has an even number of irreducible factors in $\mathbb{F}_l$, and the other, $C_{+}(x)$, has an odd number of irreducible factors.  The factors of both polynomials divide $\hat H_{7,l}(x)$ in this case, so by Theorem \ref{thm:1}, they must be linear or quadratic.  If one of the factors $\tilde q(x)$ is quadratic, there must be three quadratic factors in its orbit under the action of $x \rightarrow \phi(x)$ unless $\tilde q(x) = x^2-x+1$, which is fixed by $\phi(x)$.  However,
$$\textrm{Res}_x(x^2-x+1,f_{1728}(x)) = 2^6 \cdot 3^3 \cdot 7,$$
so $x^2-x+1$ does not occur as a factor of $f_{1728}(x)$ over $\mathbb{F}_l$.  It follows that the factors of $C_{-}(x)$ must all be linear, and the factors of $C_{+}(x)$ must be quadratic.  This proves that $f_{1728}(x)$ has exactly $6$ linear factors over $\mathbb{F}_l$.
\end{proof}

\noindent {\bf Remark.} Since $\textrm{Gal}(q(x)/\mathbb{Q}) = D_4$, it is not hard to see that $\textrm{Gal}(f_{1728}(x)/\mathbb{Q})$ is a subgroup of the wreath product $\mathbb{Z}/3\mathbb{Z} \ wr \ D_4$, which is a group of order $3^4 \cdot 8 = 648$.  See, for example, \cite[Thm. B]{mp} with $\sigma(x) = \phi(x)$.  Even though $\phi(x)$ is not a polynomial, the same arguments apply.  In fact, we could take $\sigma(x)$ to be the polynomial quotient on dividing $f_{1728}(x)$ by $x-1$, since the remainder is $1$ and consequently $(x-1)\sigma(x) \equiv -1$ (mod $f_{1728}(x)$), i.e., $\sigma(x) = \phi(x)$ on roots of $f_{1728}(x)$.  Note that $f_{1728}(x)$ is not a normal polynomial by Part (b) of the above proposition (otherwise it would split into irreducibles of the same degree mod $l$), so its Galois group must have order greater than $12$.  In the following proposition we show that the Galois group of $f_{1728}(x)$ has order $24$.  \medskip

\begin{prop} The splitting field $L$ of the polynomial $f_{1728}(x)$ over $\mathbb{Q}$ coincides with $\Sigma_{4\mathfrak{p}_7}$, the ray class field of conductor $4\mathfrak{p}_7$ over $k=\mathbb{Q}(\sqrt{-7})$, where $\mathfrak{p}_7^2 \cong 7$ in $k$.
\label{prop:3}
\end{prop}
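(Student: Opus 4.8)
The plan is to identify the roots of $f_{1728}(x)$ as singular values of the modular function $h(\tau)$ and then to pin down the splitting field $L$ by combining complex multiplication with class field theory. Since a root $a$ of $f_{1728}$ satisfies $j_7(a)=1728$, the relation $j(7\tau)=j_7(h(\tau))$ in (\ref{eqn:4}) shows the roots are the values $h(\tau)$ at points $\tau$ with $j(7\tau)=1728$, i.e. with $7\tau$ equivalent to $i$ under $\mathrm{SL}_2(\mathbb{Z})$. Such a $\tau$ is a CM point of the order of conductor $7$ and discriminant $-196$ in $\mathbb{Q}(i)$, in which $7$ is inert because $\left(\frac{-1}{7}\right)=-1$. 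First I would record what Proposition \ref{prop:2}(b) already gives: over $\mathbb{Q}(\sqrt7)$ the polynomial factors as $f_{1728}=C_+(x)C_-(x)$, so $\mathbb{Q}(\sqrt7)\subseteq L$; and since $\mathrm{disc}(C_+)\,\mathrm{disc}(C_-)=(2^6 3^6 7^2)^2(21+8\sqrt7)(21-8\sqrt7)=-7\cdot(2^6 3^6 7^2)^2$, the product of the two discriminants is $-7$ times a square, so $\sqrt{-7}\in L$. Hence $k=\mathbb{Q}(\sqrt{-7})\subseteq L$, and in fact the biquadratic field $\mathbb{Q}(i,\sqrt7)\subseteq L$.

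Next I would determine the Galois structure in two layers. The substitution $z=\frac{x^3-3x+1}{x(x-1)}$ of Proposition \ref{prop:2}(b) realizes the roots of the quartic $q(z)$ as the Hauptmodul values $z(\tau)=\left(\frac{\eta(\tau)}{\eta(7\tau)}\right)^4+8$ at these CM points; these are precisely the $j$-data of the four classes of curves with CM by the conductor-$7$ order in $\mathbb{Q}(i)$. Running the Schertz argument of Lemma \ref{lem:1} and Theorem \ref{thm:3} at $j=1728$ identifies the splitting field $F$ of $q$ with the ring class field of conductor $7$ over $\mathbb{Q}(i)$, so that $\mathrm{Gal}(F/\mathbb{Q})\cong D_4$ in accordance with the Remark, and the three quadratic subfields of $F$ are $\mathbb{Q}(i)$, $\mathbb{Q}(\sqrt7)$ and $\mathbb{Q}(\sqrt{-7})$. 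Over $F$ each root $h$ satisfies the cubic $x^3-zx^2+(z-3)x+1$, which is cyclic since its discriminant is $(z^2-3z+9)^2$. As $q$ is irreducible with transitive group $D_4$, the four resulting cubic extensions of $F$ are conjugate, and I would show the cubic extension is the base change of the unique cyclic cubic field ramified only at $7$, namely $\mathbb{Q}(\zeta_7)^+$. This yields $[L:F]=3$, $L=F\cdot\mathbb{Q}(\zeta_7)^+$ with $F\cap\mathbb{Q}(\zeta_7)^+=\mathbb{Q}$, and therefore $\mathrm{Gal}(L/\mathbb{Q})\cong D_4\times\mathbb{Z}/3$, of order $24$.

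The identification with $\Sigma_{4\mathfrak{p}_7}$ then follows by class field theory over $k$. Because $k=\mathbb{Q}(\sqrt{-7})$ is the fixed field of a Klein four-subgroup of the $D_4$-factor and the $\mathbb{Z}/3$-factor is central, $\mathrm{Gal}(L/k)\cong(\mathbb{Z}/2)^2\times\mathbb{Z}/3\cong\mathbb{Z}/2\times\mathbb{Z}/6$ is abelian of order $12$; hence $L\subseteq\Sigma_{\mathfrak{f}}$ for the conductor $\mathfrak{f}$ of $L/k$. I would compute $\mathfrak{f}$ from the ramification of $f_{1728}$: the only rational primes that can ramify are $2,3,7$, the prime $3$ is unramified (its power in $\mathrm{disc}(f_{1728})$ being absorbed into the index $[\mathcal{O}_L:\mathbb{Z}[\alpha]]$), the $7$-part of $\mathfrak{f}$ is the tame prime $\mathfrak{p}_7$ arising from $\mathbb{Q}(\zeta_7)\subseteq L$, and the wild $2$-part is exactly $(4)=\mathfrak{p}_2^2\bar{\mathfrak{p}}_2^2$. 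Thus $\mathfrak{f}=4\mathfrak{p}_7$, and since $h_k=1$ and $-1\not\equiv 1$ modulo the conductor, a direct count gives $[\Sigma_{4\mathfrak{p}_7}:k]=\Phi(4\mathfrak{p}_7)/2=(4\cdot 6)/2=12=[L:k]$, so $L=\Sigma_{4\mathfrak{p}_7}$.

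The main obstacle is the conductor computation, namely showing that the $2$-part of the conductor of $L/k$ is exactly $(4)$ (this is the wild part, where the division-field information is most delicate) and that $3$ is genuinely unramified despite dividing $\mathrm{disc}(f_{1728})$. A secondary technical point is that the CM step identifying $F$ must be carried out with care at $j=1728$, the extra-automorphism value that forced the exclusion of the small cases in Theorem \ref{thm:3}; and one must verify the disjointness $F\cap\mathbb{Q}(\zeta_7)^+=\mathbb{Q}$, equivalently that the cyclic cubic on top of $F$ descends to $\mathbb{Q}(\zeta_7)^+$, in order to obtain the clean direct-product structure $\mathrm{Gal}(L/\mathbb{Q})\cong D_4\times\mathbb{Z}/3$ on which the rest of the argument rests.
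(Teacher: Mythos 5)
Your overall strategy --- identify the degree-$8$ layer as a class field of $\mathbb{Q}(i)$, put a cyclic cubic on top, and then pin down the conductor of the abelian extension $L/k$ --- is a genuinely different route from the paper's, and your degree count $[L:\mathbb{Q}]=24$ and ray class number $[\Sigma_{4\mathfrak{p}_7}:k]=12$ are correct. But the step you yourself flag as ``the main obstacle'' is the entire content of the proposition, and you give no argument for it. Knowing that $L/k$ is abelian of degree $12$ only gives $L\subseteq\Sigma_{\mathfrak{f}}$ for the unknown conductor $\mathfrak{f}$; to conclude $L=\Sigma_{4\mathfrak{p}_7}$ you must prove the upper bound $\mathfrak{f}\mid 4\mathfrak{p}_7$, i.e.\ that the wild part at the two primes above $2$ is at most $(4)$ and that $3$ does not ramify. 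Neither fact can be read off from $\mathrm{disc}(f_{1728})=-2^{36}\cdot 3^{24}\cdot 7^{11}$ without controlling the index $[\mathcal{O}_L:\mathbb{Z}[\alpha]]$, which is as hard as the problem itself; asserting that the $3$-part is ``absorbed into the index'' is a restatement of what must be shown. The paper circumvents this by producing explicit radical generators: from the resolvent cubic of $x^4-42x^2-7$ and the identity $\frac{21+\sqrt{-7}}{2}=\left(\frac{1+\sqrt{-7}}{2}\right)^4\sqrt{-7}$ one gets $k(\sqrt{21+8\sqrt{7}})=k(\sqrt[4]{-7},i)$, hence $L=k(r,\sqrt[4]{-7},i)$, and the conductor is then the lcm of three individually computable conductors ($\wp_7$, a divisor of $4\wp_7$, and $(4)$). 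Some substitute for this explicit description is needed before your argument closes.

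Two further steps are asserted rather than proved. First, you cannot literally ``run the Schertz argument of Lemma \ref{lem:1} and Theorem \ref{thm:3}'' to identify $F$ with the ring class field of conductor $7$ over $\mathbb{Q}(i)$: those arguments rest on $7$ splitting in the imaginary quadratic field (the basis quotients $w/1$ and $w/7$ for $R_K$ and $\wp_7$), whereas $7$ is inert in $\mathbb{Q}(i)$, and $j=1728$ carries exactly the extra automorphisms that forced the exclusions in Theorem \ref{thm:3}. You would need a separate CM input (e.g.\ that $j(\tau)$ and $j(7\tau)$ at these points generate the ring class field of the order of conductor $7$, together with a degree count); the conclusion is plausibly true, but it is not a citation of the paper's lemmas. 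Second, the claim that the cubic layer is the base change of $\mathbb{Q}(\zeta_7)^+$ --- equivalently that $r\in L$, that $F\cap\mathbb{Q}(\zeta_7)^+=\mathbb{Q}$, and that the cyclic cubic $F(h)/F$ descends to $\mathbb{Q}$ --- needs an argument, since a cyclic cubic extension of $F$ need not be of the form $F\cdot M$ with $M/\mathbb{Q}$ cubic. The paper gets $r\in L$ directly from the norm factorization $f_{1728}=N_{\mathbb{Q}(r)/\mathbb{Q}}(g)$ with $g$ an irreducible quartic over $\mathbb{Q}(r)$, and bounds $[L:\mathbb{Q}]$ by $24$ via the wreath-product remark. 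These gaps are fillable, but as written your proposal establishes only that $L$ is an abelian extension of $k$ containing $\mathbb{Q}(\zeta_{28})$, not that it equals $\Sigma_{4\mathfrak{p}_7}$.
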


\begin{proof}
Note that $f_{1728}(x)$ factors over the field $\mathbb{Q}(r)$: $f_{1728}(x) = N_{\mathbb{Q}(r)/\mathbb{Q}}(g(x))$,
where
\begin{align*}
g(x) &= x^4 + \frac{1}{7}(-10r^2 + 72r - 54)x^3 + \frac{1}{7}(18r^2 - 124r + 93)x^2\\
& \ + \frac{1}{7}(-12r^2 + 78r - 62)x + \frac{1}{7}(2r^2 - 20r + 15).
\end{align*}
By the above proof, it also factors over the field $\mathbb{Q}(\sqrt{7})$.  Hence, the real abelian sextic field $\mathbb{Q}(\sqrt{7},r)$ is a subfield of $L$.   Since
$$\textrm{disc}_x f_{1728}(x) = -2^{36} \cdot 3^{24} \cdot 7^{11},$$
it is also clear that $\sqrt{-7} \in L$.  Thus, $\mathbb{Q}(\zeta_{28}) \subset L$.  Now it can be checked using the cubic resolvent of the polynomial $g(x)$ that $\textrm{Gal}(g(x)/\mathbb{Q}(r)) = D_4$ has order $8$.  On the other hand, the above remark shows that at most $8$ can divide the degree of $L/\mathbb{Q}$.  It follows that the conjugates $g^\phi, g^{\phi^2}$ of $g(x)$, whose groups over $\mathbb{Q}(r)$ are also dihedral of order $8$, must split completely in the splitting field of $g(x)$ over $\mathbb{Q}(r)$.  Hence $[L:\mathbb{Q}(r)] = 8$ and therefore $[L:\mathbb{Q}]=24$.  Thus, $[L:\mathbb{Q}(\sqrt{-7})]=12$ and $[L:\mathbb{Q}(\zeta_{28})]=2$.  If $J = \langle \tau \rangle \subset G = \textrm{Gal}(L/k)$ corresponds to $\mathbb{Q}(\zeta_{28})$ in the Galois correspondence, then $J$ is a normal subgroup of $G$ for which $G/J = \textrm{Gal}(\mathbb{Q}(\zeta_{28})/k) \cong \mathbb{Z}_2 \times \mathbb{Z}_3$ is cyclic, which implies that $G$ is abelian.  \medskip

The discriminant calculation 
$$\textrm{disc}_x(A(x)-2\sqrt{7}x(x-1)B(x)) = 2^6 \cdot 3^6 \cdot 7^2 \cdot (21+8\sqrt{7})$$
in the proof of Prop. \ref{prop:2} implies that $\sqrt{21+8\sqrt{7}} \in L$.  Also, $k(\sqrt{21+8\sqrt{7}})$ is a quartic extension of $k$.  Hence $L = k(\sqrt{21+8\sqrt{7}}, r)$.  Now the discriminant of $\mathbb{Q}(\zeta_7)/\mathbb{Q}$ is $-7^5$, which yields that the norm of the discriminant $\mathfrak{d}_1$ of $\mathbb{Q}(\zeta_7)/k$ is $7^2$.  Thus $\mathfrak{d}_1 \cong 7 \cong \wp_7^2$, so that $\mathfrak{f}_1 \cong \wp_7$ is the conductor of the cubic extension $\mathbb{Q}(\zeta_7)/k$.  Further, the roots of the cubic resolvent of $x^4-42x^2-7$ are $0, -42+2\sqrt{-7}, -42-2\sqrt{-7}$ (see \cite[p. 191]{vdw}), from which we obtain that
$$k(\sqrt{21+8\sqrt{7}}) = k\left(\sqrt{\frac{21+\sqrt{-7}}{2}},\sqrt{\frac{21-\sqrt{-7}}{2}}\right).$$
Note that
$$\left(\sqrt{\frac{21+\sqrt{-7}}{2}} + \sqrt{\frac{21-\sqrt{-7}}{2}}\right)^2 = 21+8\sqrt{7}.$$
Since
$$\frac{21+\sqrt{-7}}{2} = \left(\frac{1+\sqrt{-7}}{2}\right)^4 \sqrt{-7},$$
this gives that
$$k(\sqrt{21+8\sqrt{7}}) = k(\sqrt[4]{-7}, \sqrt{-\sqrt{-7}})=k(\sqrt[4]{-7},i).$$
The conductor $\mathfrak{f}_2$ of the field $k(\sqrt[4]{-7})$ divides $4\wp_7$, while the conductor of $k(i)$ equals $\mathfrak{f}_3 = (4)$.  It follows that the conductor of $L = k(r, \sqrt[4]{-7}, i)$ is $\textrm{lcm}(\mathfrak{f}_1,\mathfrak{f}_2,\mathfrak{f}_3) = \mathfrak{f}_{L/k} = 4\wp_7$.  Since $[\Sigma_{4\wp_7}:k] = 12$, it follows that $L = \Sigma_{4\wp_7}$.
\end{proof}

Turning now to the cases $l \equiv 3, 5$ (mod $7$), we have:

\begin{prop} Assume $l > 3, l \neq 7$ is a prime with $l \equiv 3, 5$ (mod $7$). \smallskip

\noindent (a) If $l \equiv 2$ (mod $3$), the polynomial
$$f_0(x) = (x^2-x+1)(x^6-11x^5+30x^4-15x^3-10x^2+5x+1)$$
has exactly $2$ distinct irreducible cubic factors modulo $l$. \smallskip

\noindent (b) If $l \equiv 3$ (mod $4$), the polynomial
\begin{align*} 
f_{1728}(x) =& x^{12}-18x^{11}+117x^{10}-354x^9+570x^8-486x^7+273x^6\\
\notag & -222x^5+174x^4-46x^3-15x^2+6x+1
\end{align*}
has exactly $2$ distinct irreducible cubic factors modulo $l$.
\label{prop:4}
\end{prop}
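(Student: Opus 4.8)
The plan is to adapt the two methods used in Proposition~\ref{prop:2}, replacing the roles of ``linear/quadratic'' by ``cubic/sextic'' throughout; the change of factor type is dictated by Theorem~\ref{thm:1}(iv), which governs the case $l \equiv 3,5$ (mod $7$).

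\emph{Part (a).} I would argue exactly as in Proposition~\ref{prop:2}(a). Since $l \equiv 2$ (mod $3$) we have $\left(\frac{-3}{l}\right) = -1$, so $x^2 - x + 1$ remains irreducible modulo $l$ and contributes no cubic factor. A root of the sextic $s(x) = x^6 - 11x^5 + 30x^4 - 15x^3 - 10x^2 + 5x + 1$ generates the cyclic sextic field $F^+ = \mathbb{Q}(\zeta_{21})^+$, so $s(x)$ is irreducible over $\mathbb{Q}$ with splitting field $F^+$; moreover $\textrm{disc}(s) = 2^{12} \cdot 3^3 \cdot 7^5$ is prime to $l$, whence $s(x)$ is separable modulo $l$ and splits into distinct irreducible factors all of degree equal to the order of the Frobenius at $l$ in $\textrm{Gal}(F^+/\mathbb{Q}) \cong (\mathbb{Z}/21\mathbb{Z})^\times/\{\pm 1\}$. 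The key computation is that this order is $3$: the hypotheses $l \equiv 3,5$ (mod $7$) and $l \equiv 2$ (mod $3$) force $l \equiv 5$ or $17$ (mod $21$), and since $17 \equiv -4$ one checks $5^3 \equiv (-4)^3 \equiv -1$ (mod $21$) while $l \not\equiv \pm 1$ (mod $21$). Hence $s(x)$ splits into exactly two distinct irreducible cubics, giving the claim.

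\emph{Part (b).} I would run the first half of the proof of Proposition~\ref{prop:2}(b) verbatim. Because $l \equiv 3$ (mod $4$) and $\left(\frac{l}{7}\right) = -1$, we get $\left(\frac{7}{l}\right) = 1$, so $\sqrt{7} \in \mathbb{F}_l$ and $f_{1728}(x)$ factors over $\mathbb{F}_l$ as $C_+(x)C_-(x)$ with $C_\pm(x) = A(x) \pm 2\sqrt{7}\,x(x-1)B(x)$, each of degree $6$; since $\textrm{disc}(f_{1728}) = -2^{36} \cdot 3^{24} \cdot 7^{11}$ is prime to $l$, both $C_\pm$ are separable and coprime. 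The discriminant computation of Proposition~\ref{prop:2} gives $\textrm{disc}(C_\pm) = 2^6 \cdot 3^6 \cdot 7^2 (21 \mp 8\sqrt{7})$, and as $(21+8\sqrt 7)(21-8\sqrt 7) = -7$ has Legendre symbol $\left(\frac{-7}{l}\right) = \left(\frac{l}{7}\right) = -1$, exactly one of $\textrm{disc}(C_+), \textrm{disc}(C_-)$ is a square in $\mathbb{F}_l$. By the PSV theorem one of $C_+, C_-$ then has an even number of irreducible factors over $\mathbb{F}_l$ and the other an odd number. The new ingredient is the identification of factorization type: since $C_\pm(x) \mid f_{1728}(x) \mid \hat H_{7,l}(x)$ (the exponent $s_l = 1$ because $l \equiv 3$ mod $4$), every irreducible factor of $C_\pm$ is a factor of $\hat H_{7,l}$, hence by Theorem~\ref{thm:1}(iv) is $x^2 - x + 1$, a cubic, or a sextic. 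But $\textrm{Res}_x(x^2-x+1, f_{1728}(x)) = 2^6 \cdot 3^3 \cdot 7 \not\equiv 0$ (mod $l$), so $x^2 - x + 1 \nmid f_{1728}(x)$; thus every irreducible factor of $C_\pm$ is cubic or sextic. A separable degree-$6$ polynomial whose irreducible factors all have degree $3$ or $6$ is either irreducible (one factor, odd) or a product of two distinct cubics (two factors, even). Matching this with the PSV parity, the $C_\pm$ of even factor-count splits into two distinct cubics and the other is an irreducible sextic, so $f_{1728}(x)$ has exactly two distinct irreducible cubic factors modulo $l$.

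The main obstacle is the bookkeeping in part (b): one must be certain that the only factorization shapes available to a degree-$6$ factor restricted by Theorem~\ref{thm:1} are ``one irreducible sextic'' and ``two distinct cubics'' (which requires excluding the quadratic $x^2-x+1$ via the resultant), so that the even/odd dichotomy furnished by PSV pins down exactly one two-cubic factor and hence exactly two cubics overall. By contrast, the Frobenius-order computation in part (a) is routine once the cyclic sextic splitting field $F^+$ is identified.
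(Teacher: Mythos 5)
Your proposal is correct and follows essentially the same route as the paper: part (a) via the Frobenius order of $l$ in $\mathrm{Gal}(\mathbb{Q}(\zeta_{21})^+/\mathbb{Q})$, and part (b) via the factorization $f_{1728}=C_+C_-$ over $\mathbb{F}_l(\sqrt{7})=\mathbb{F}_l$, the Pellet--Stickelberger--Voronoi parity, and Theorem~\ref{thm:1}(iv) together with the resultant excluding $x^2-x+1$. Your write-up is somewhat more explicit than the paper's (which largely defers to the proof of Proposition~\ref{prop:2}), but no new ideas or gaps are involved.
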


\begin{proof} For (a), this is immediate from the proof of Proposition \ref{prop:2}.  The conditions $l \equiv 2$ mod $3$ and $l \equiv 3, 5$ mod $7$ give that $l \equiv 17$, resp. $5$ mod $21$.  Since
$$l^3 \equiv -4^3 \ \textrm{or} \ 5^3 \equiv -1 \ (\textrm{mod} \ 21),$$
$l$ splits into primes of degree $3$ in the field $F^+$.  This yields the assertion. \medskip

For (b), we have $f_{1728}(x) = C_{+}(x) C_{-}(x)$ as in Proposition \ref{prop:2}.  By Theorem \ref{thm:1}, the factors of $f_{1728}(x)$ over $\mathbb{F}_l$ can only be $x^2-x+1$, cubic or sextic.  We have seen that the first possibility does not occur.  As before, if $C_{-}(x)$ has an even number of irreducible factors mod $l$, then $C_{+}(x)$ has an odd number of irreducible factors.  It follows that $C_{-}(x)$ is a product of two cubics and $C_{+}(x)$ is an irreducible sextic.  This proves the assertion.
\end{proof}

\begin{lem}
\begin{enumerate}[(a)]

\item Treating $z, j$ as indeterminates, the Galois group $G$ of the normal closure $N$ of the function field extension $\mathbb{Q}(z)/\mathbb{Q}(j)$ defined by
$$F(z,j) = (z^2-3z+9)(z^2-11z+25)^3-j(z-8) = 0$$
is
$$G = \textrm{Gal}(N/\mathbb{Q}(j)) = \textsf{PGL}(2, 7).$$

\item If $l \neq 7$ is a prime, the Galois group of the normal closure $\overline{N}$ of the extension $\mathbb{F}_l(z)/\mathbb{F}_l(j)$ is a subgroup $\overline{G}$ of $G$, considered as a permutation group on the roots of $F(z,j)$ (mod $l$).
\end{enumerate}
\label{lem:3}
\end{lem}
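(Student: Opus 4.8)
The plan is to determine $G$ in (a) from the modular meaning of $F$ and then obtain (b) as a good-reduction statement.

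\textbf{Part (a).} First I would observe that $F(z,j)$ is monic of degree $8$ in $z$: the product $(z^2-3z+9)(z^2-11z+25)^3$ has degree $8$, while $-j(z-8)$ has degree $1$. Solving $F(z,j)=0$ for $j$ gives $j=J(z):=\frac{(z^2-3z+9)(z^2-11z+25)^3}{z-8}$, which by (\ref{eqn:6}) is precisely $j(7\tau)$ when $z=z(\tau)$. Since the numerator of $J$ equals $49\ne 0$ at $z=8$, the map $J$ has degree $8$, so $[\mathbb{Q}(z):\mathbb{Q}(j)]=8$ and $F$ is irreducible over $\mathbb{Q}(j)$. Hence $G$ is a transitive subgroup of $S_8$, acting on the eight roots $z(\gamma\tau)$ as $\gamma$ runs over coset representatives of $\Gamma_0(7)\backslash SL_2(\mathbb{Z})$, which are indexed by $\mathbb{P}^1(\mathbb{F}_7)$.

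Next I would compute the geometric Galois group $G^{\textrm{geom}}=\textrm{Gal}(N/\overline{\mathbb{Q}}(j))$. Because $z\mapsto J(z)$ is the modular cover $X_0(7)\to X(1)$, the monodromy action factors through $SL_2(\mathbb{Z})\to SL_2(\mathbb{F}_7)\to \textsf{PSL}(2,7)$ on $\mathbb{P}^1(\mathbb{F}_7)$; since reduction mod $7$ is surjective and the action is transitive, $G^{\textrm{geom}}=\textsf{PSL}(2,7)$, of order $168$. As a consistency check, the branch cycles over $j=\infty,1728,0$ have cycle types $(7,1)$, $2^4$ and $3^2 1^2$, coming respectively from the cusp widths $1,7$ of $X_0(7)$, from its having no elliptic point of order $2$ (as $7\equiv 3\bmod 4$), and from its two elliptic points of order $3$; Riemann--Hurwitz ($-2=8(-2)+6+4+4$) confirms these, and all three are even permutations generating $\textsf{PSL}(2,7)$.

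To pass from $G^{\textrm{geom}}$ to $G$, note that $G^{\textrm{geom}}\trianglelefteq G$ and that $G$ lies in $N_{S_8}(\textsf{PSL}(2,7))$, which (as $\textsf{PSL}(2,7)$ is primitive with trivial centralizer in $S_8$ and $\textrm{Out}(\textsf{PSL}(2,7))=\mathbb{Z}/2$) equals $\textsf{PGL}(2,7)$. Thus $G$ is $\textsf{PSL}(2,7)$ or $\textsf{PGL}(2,7)$, distinguished by whether $G\subseteq A_8$, i.e. by whether $\textrm{disc}_z F(z,j)$ is a square in $\mathbb{Q}(j)$. Since the only finite branch points are $j=0,1728$ and the ramification there is tame with different exponents $4$ and $4$, one gets $\textrm{disc}_z F(z,j)=c\,(j^2(j-1728)^2)^2$ for a single constant $c\in\mathbb{Q}^\ast$, so $G\subseteq A_8$ iff $c$ is a square. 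I would then evaluate this constant and check that its square class is that of $-7$ (equivalently, that $\sqrt{-7}\in N\setminus\mathbb{Q}(j)$, the constant field of the $\textsf{PGL}/\textsf{PSL}$ extension attached to the mod-$7$ cover), forcing $G=\textsf{PGL}(2,7)$. Establishing $G^{\textrm{geom}}=\textsf{PSL}(2,7)$ and pinning down the square class of $c$ are the substantive steps; everything else is formal.

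\textbf{Part (b).} For a prime $l\ne 7$ the cover $X_0(7)\to X(1)$, and hence its Galois closure, has good reduction, so $F\bmod l$ remains separable of degree $8$ over $\mathbb{F}_l(j)$ (the discriminant $c\,(j^2(j-1728)^2)^2$ is nonzero mod $l$, its only bad prime being $7$), and its roots are the reductions of the roots of $F$. Choosing a prime $\mathfrak{l}$ of $N$ over $l$, the inertia group $I_{\mathfrak{l}}$ is trivial and the decomposition group $D_{\mathfrak{l}}\le G$ maps isomorphically onto $\overline{G}=\textrm{Gal}(\overline{N}/\mathbb{F}_l(j))$, compatibly with the action on the roots via reduction at $\mathfrak{l}$. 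Therefore $\overline{G}\le G$ as permutation groups on the roots of $F(z,j)\bmod l$, which is the assertion.
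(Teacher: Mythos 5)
Your argument is correct in outline, but it reaches $\textsf{PGL}(2,7)$ by a genuinely different route than the paper. The paper sandwiches $|G|$: it gets the lower bound $|G|\ge \textrm{lcm}(12,14,16)=336$ from explicit factorizations of $F(z,j)$ at the three CM specializations $j=-15^3,-96^3,66^3$ (whose large-degree factors generate ring class fields with groups $D_7$, $D_6$, $D_8$), gets the upper bound $[N:\mathbb{Q}(j)]\le 6\cdot|\textsf{PSL}(2,7)|=1008<8!$ by embedding $N$ into $k(s,h)$ with $k=\mathbb{Q}(\zeta_7)$ and $\textrm{Gal}(k(s,h)/k(j))\cong\textsf{PSL}(2,7)$, and then quotes Sims's table: $\textsf{PGL}(2,7)$ and $S_8$ are the only primitive degree-$8$ groups containing odd permutations. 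You instead pin down the geometric monodromy group as exactly $\textsf{PSL}(2,7)$ (the Galois closure of $X_0(7)\to X(1)$ is $X(7)\to X(1)$, since the Borel subgroup is core-free in the simple group $\textsf{PSL}(2,7)$ -- your phrase ``reduction mod $7$ is surjective and the action is transitive'' should really be this core-freeness statement), then use $N_{S_8}(\textsf{PSL}(2,7))=\textsf{PGL}(2,7)$ to reduce to a binary choice, decided by whether $\textrm{disc}_z F(z,j)$ is a square in $\mathbb{Q}(j)$. Your route buys a cleaner group-theoretic endgame (no appeal to the classification of primitive degree-$8$ groups, no auxiliary specializations), at the price of two computations you defer: the exact geometric group and, crucially, the square class of the constant $c$ in $\textrm{disc}_z F(z,j)=c\,j^4(j-1728)^4$. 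That constant is the one genuinely unfinished step in your write-up; it is confirmed by the paper's own formula $\textrm{disc}_z F(z,j)=-7^7 j^4(j-1728)^4$, whose square class is indeed $-7$, matching the constant field $\mathbb{Q}(\sqrt{-7})\subset\mathbb{Q}(\zeta_7)$ of the $\textsf{PGL}/\textsf{PSL}$ layer. Note that both proofs ultimately rest on the same modular input, $\textrm{Gal}(X(7)/X(1))\cong\textsf{PSL}(2,7)$: the paper uses it only for an upper bound, you use it for the exact geometric group. For part (b) your decomposition/inertia argument is the same as the paper's appeal to van der Waerden's specialization theorem, both hinging on the separability of $F$ mod $l$ via the discriminant being prime to $l$ for $l\ne 7$.
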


\begin{proof} (a) First note the following factorizations into irreducibles:
\begin{align*}
F(z,-15^3)  = & (z - 1)(z^7 - 35z^6 + 511z^5 - 4081z^4 + 19754z^3 - 60550z^2 \\
& \ + 115500z - 113625),\\
F(z,-96^3)  = & (z^2 - 11z + 73)(z^6 - 25z^5 + 198z^4 - 589z^3 + 2902z^2\\
& \  - 5385z - 95031),\\
F(z,66^3) = & z^8 - 36z^7 + 546z^6 - 4592z^5 + 23835z^4 - 80304z^3 + 176050z^2\\
& \  - 519996z + 2440593.
\end{align*}
In these cases the factor of larger degree has Galois group $D_7, D_6, D_8$, respectively; and each is a subgroup of $G$.  (The $7$-th degree factor belongs to the ring class field of conductor $f=7$ over $\mathbb{Q}(\sqrt{-7})$; the $6$-th degree factor belongs to the ring class field of conductor $f = 7$ over $\mathbb{Q}(\sqrt{-19})$; the $8$-th degree factor belongs to the ring class field of conductor $f=2\cdot 7$ over $\mathbb{Q}(i)$.)  Hence, $|G| \ge lcm(12,14,16) = 336$.  Since $\textsf{PGL}(2,7)$ and $S_8$ are the only primitive permutation groups of degree 8 containing odd permutations, we just have to show that $|G| < |S_8|$.  (See Table 1 in \cite{sim}.)  Using the field $k=\mathbb{Q}(\zeta_7)$, we consider the chain of fields
$$\mathbb{Q}(j) \subset k(j) \subset k(z) \subset k(h) \subset k(s,h),$$
where the relationships between the indeterminates $z, h, s$ are defined by
$$h^3 - 3h + 1 - z h(h-1) = 0, \ \ s^7 = h(h-1)^2.$$
Then by our previous results the last four fields in the above chain are isomorphic to the modular function fields, respectively, in the chain
$$\textsf{K}_\Gamma \subset \textsf{K}_{\Gamma_0(7)} \subset \textsf{K}_{\Gamma_1(7)} \subset \textsf{K}_{\Gamma(7)},$$
where $\textsf{K}_{\Gamma(7)}$ is normal over $\textsf{K}_\Gamma$ with Galois group $\Gamma/\Gamma(7) \cong \textsf{PSL}(2,7)$.  It follows that as permutation groups on the roots of $F(z,j) = 0$,
$$\textrm{Gal}(k(s,h)/k(j)) \cong \textsf{PSL}(2,7).$$
Furthermore, $k(s,h)$ is normal over $\mathbb{Q}(j)$, since any isomorphism of $k(s,h)$ (into an algebraic closure) which fixes $\mathbb{Q}(j)$ also sends the field $k(j)$ to itself ($k/\mathbb{Q}$ being abelian) and $k(z)$ to a conjugate field $k(z')$ inside $k(s,h)$ and therefore -- since $k(s,h)$ is the normal closure of $k(z')/k(j)$ -- maps $k(s,h)$ to itself.  Thus, the normal closure $N$ of $\mathbb{Q}(z)/\mathbb{Q}(j)$ is a subfield of $k(s,h)$, which gives that
$$[N:\mathbb{Q}(j)] \le [k(s,h): k(j)] [k:\mathbb{Q}] = 6 |\textsf{PSL}(2,7)| = 1008 < 8!.$$
This proves that
$$\textrm{Gal}(N/\mathbb{Q}(j)) = \textsf{PGL}(2,7).$$

(b) This is immediate from van der Waerden's theorem \cite[p. 198]{vdw}, using
$$\mathfrak{R} = \mathbb{Z}[j], \ \mathfrak{p} = (l) = l \mathbb{Z}[j], \ \overline{\mathfrak{R}} = \mathfrak{R}/\mathfrak{p} = \mathbb{F}_l[j],$$
and the fact that
$$\textrm{disc}_z(F(z,j)) = -7^7j^4(j-1728)^4.$$
This discriminant calculation implies that $F(z,j)$ and $\overline{F}(z,j) = F(z,j)$ mod $\mathfrak{p}$ are separable.  Note also that $\overline{F}(z,j)$ is absolutely irreducible over $\mathbb{F}_l$, for $l \neq 7$, since
$$\textrm{Res}_z((z^2-3z+9)(z^2-11z+25),z-8) = 7^2.$$
\end{proof}

\noindent {\bf Remark.} The proof of Lemma \ref{lem:3}  yields a polynomial with Galois group $\textsf{PSL}(2,7)$ over $k(j)$, which can be used to give a polynomial with Galois group $\textsf{PSL}(2,7)$ over $\mathbb{F}_l(j)$, for primes $l$ for which $\left(\frac{-7}{l}\right) = +1$.  See \cite{efm}. \medskip

We can now draw the following conclusion.  The Galois group $\overline{G}$ of the normal closure $\overline{N}$ of $\mathbb{F}_l(z)/\mathbb{F}_l(j)$ is a subgroup of $\textsf{PGL}(2,7)$, so that the action of $\overline{G}$ on the roots of the $8$-th degree polynomial
$$F(z,j) = (z^2-3z+9)(z^2-11z+25)^3-j(z-8)$$
is isomorphic to its action as a group of linear fractional transformations on the projective line $\textsf{P}(1,\mathbb{F}_7)$.  A non-identity linear fractional map has no more than two fixed points.  Now we use the correspondence between the cycle type of a conjugacy class of permutations $\big(\frac{\overline{N}/\mathbb{F}_l(j)}{\mathfrak{p}_a}\big)$ in $\overline{G} = \textrm{Gal}(\overline{N}/\mathbb{F}_l(j))$ and the factorization of the polynomial $F(z,j) 
\equiv F(z,a)$ modulo $\mathfrak{p}_a$, for the prime divisor $\mathfrak{p}_a$ of the function field $\mathbb{F}_l(j)$ which is the numerator divisor of $j-a$, for a given $a \in \mathbb{F}_l$.  Note that
$$\left(\frac{\overline{N}/\mathbb{F}_l(j)}{\mathfrak{p}_a}\right) = \{1\} \subset \overline{G}$$
if and only if $F(z,a)$ splits into linear factors modulo $l$.  By the theorem of Pellet-Stickelberger-Voronoi, 
$$\left(\frac{\textrm{disc}_z(F(z,a))}{l}\right) = \left(\frac{-7}{l}\right) = (-1)^{r_l}, \ \ l \neq 7, \ a \not \equiv 0, 1728 \ (\textrm{mod} \ l);$$
where $r_l$ is the number of distinct irreducible factors of $F(z,a)$ (mod $l$).  It follows that if $l \equiv 3, 5, 6$ (mod $7$), the $8$-th degree polynomial $F(z,a)$ has an odd number of irreducible factors (mod $l$), whence it cannot split completely and
$$\left(\frac{\overline{N}/\mathbb{F}_l(j)}{\mathfrak{p}_a}\right) \neq \{1\},$$
for these primes.  Hence, when $l \equiv 3, 5, 6$ (mod $7$), then $F(z,a)$ can have no more than $2$ linear factors mod $l$, when $j \equiv a \neq 0, 1728$ is a supersingular invariant in $\mathbb{F}_l$.  We would like to show that $F(z,a)$ always has exactly two linear factors in this situation. \medskip

Now we know that for $d = l$ or $4l$ ($l>3$), the class equation $H_{-d}(X)$ splits into linear factors (mod $l$).  In fact,
\begin{align*}
H_{-l}(X) & \equiv (X-1728)R(X)^2,\\
H_{-4l}(X) & \equiv (X-1728)S(X)^2 \ (\textrm{mod} \ l), \ l \equiv 3 \ (\textrm{mod} \ 4),
\end{align*}
and
$$H_{-4l} \equiv T(X)^2 \ (\textrm{mod} \ l), \ l \equiv 1 \ (\textrm{mod} \ 4);$$
where $R(X),S(X), T(X)$ are products of distinct linear factors and the gcd $(R(X),S(X)) = 1$ in $\mathbb{F}_l[X]$.  The roots of these polynomials in $\mathbb{F}_l$ are exactly the supersingular $j$-invariants in characteristic $l$ which lie in the prime field.  (See \cite[Props. 9, 11, pp. 95, 104]{brm} and the references cited in that paper.)  \smallskip

This discussion leads to a proof of the following theorem.

\begin{thm} (a) If $l \equiv 6$ (mod $7$) and $j \not \equiv 0, 1728$ (mod $l$) is a supersingular $j$-invariant in $\mathbb{F}_l$, then the polynomial
\begin{align}
\label{eqn:12} G(x,j) = & (x^2 - x + 1)^3 (x^6 + 229x^5 + 270x^4 - 1695x^3 + 1430x^2 - 235x + 1)^3\\
\notag &\ - jx(x-1)(x^3 - 8x^2 + 5x + 1)^7.
\end{align}
has exactly $6$ distinct linear factors over $\mathbb{F}_l$. \smallskip

(b) If $l \equiv 3, 5$ (mod $7$), and $j \not \equiv 0, 1728$ (mod $l$) is a supersingular $j$-invariant in $\mathbb{F}_l$, then the polynomial $G(x,j)$ has exactly two irreducible cubic factors over $\mathbb{F}_l$.
\label{thm:7}
\end{thm}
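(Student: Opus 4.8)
The plan is to exhibit $G(x,j)$ as a cubic cover of an octic and then read off its factorization from the action of Frobenius on $\textsf{P}(1,\mathbb{F}_7)$ together with Theorem \ref{thm:1}. First I would record the identity
\[
G(x,j)=\prod_{\tilde F(z,j)=0} c_z(x),\qquad c_z(x)=x^3-zx^2+(z-3)x+1,
\]
where $\tilde F(z,j)=(z^2-3z+9)(z^2+229z+505)^3-j(z-8)^7$ is the numerator of $j_{7,7}(x)-j$ rewritten through $z=\frac{x^3-3x+1}{x(x-1)}$ (this is equation (\ref{eqn:7})). Since $c_z(x)=z(x-x^2)+(x^3-3x+1)$ is linear in $z$ and $\tilde F$ is monic of degree $8$ in $z$, the product equals $(x^2-x)^8\,\tilde F\!\left(\frac{x^3-3x+1}{x(x-1)},j\right)=G(x,j)$, so the identity is exact. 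Using $\textrm{disc}(c_z)=(z^2-3z+9)^2$ and the fact that $z^2-3z+9=0$ forces $\tilde F(z,j)=-j(z-8)^7$, hence $j=0$, one checks $G(x,j)$ is separable for $j\neq 0,1728$, so the $24$ roots are honest. Finally $\tilde F(z,j)=\frac{(z-8)^8}{49}F(A(z),j)$ with $A(z)=\frac{8z-15}{z-8}$, so $\tilde F$ is carried onto $F$ by the Möbius map $A$; by Lemma \ref{lem:3} it has Galois group $\textsf{PGL}(2,7)$ over $\mathbb{Q}(j)$ and, for each specialization, the same cycle type as $F$, whence at most two of the $z$ lie in $\mathbb{F}_l$.

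The decisive input is that every root $h$ of $G(\cdot,j)$ is a supersingular parameter: from $j=j_{7,7}(h)$ and the $7$-isogeny $E_7(h)\to E_{7,7}(h)$ the curve $E_7(h)$ is supersingular, so $\hat H_{7,l}(h)=0$ (the roots of $G$ avoid $x=0,1$ and the zeros of $x^3-8x^2+5x+1$, i.e. $z\neq 8$). Hence every irreducible factor of the separable polynomial $G(\cdot,j)$ divides $\hat H_{7,l}$ and its degree is restricted by Theorem \ref{thm:1}. Now I use the cycle-type dictionary: a Frobenius cycle of length $m$ on the eight $z$ yields a chunk $\prod_{z\text{-orbit}}c_z$ of $G$ of degree $3m$, splitting over $\mathbb{F}_l$ into irreducibles of degree $m$ or $3m$. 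For $l\equiv 6$ only degrees $1,2$ are permitted, so $m\ge 3$ is impossible and every fixed $z$ must give three linear factors; with the Pellet--Stickelberger--Voronoi parity $\left(\frac{-7}{l}\right)=-1$ (forcing an odd number of factors) and the fact that an involution of $\textsf{PGL}(2,7)$ fixes $0$ or $2$ points, Frobenius must have cycle type $1^2 2^3$. Thus exactly two $z$ lie in $\mathbb{F}_l$, each $c_z$ splits completely, and $G(\cdot,j)$ has exactly $6$ linear factors.

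For $l\equiv 3,5$ the same dictionary, with Theorem \ref{thm:1} now allowing only degrees $3$ and $6$ (one checks $x^2-x+1\nmid G(\cdot,j)$ for $j\neq 0$), excludes $m\in\{4,5,7,8\}$ and forces each fixed $z$ to give an \emph{irreducible} cubic $c_z$, since a split $c_z$ would create forbidden linear factors. The only odd cycle types of $\textsf{PGL}(2,7)$ with lengths in $\{1,2,6\}$ are the split classes of order $2$ and $6$, each fixing exactly two points; so there are exactly two rational $z$, giving exactly two irreducible cubic factors, while the remaining $z$ (in $2$- or $6$-cycles) contribute only sextics.

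The step I expect to be the crux is ruling out the fixed-point-free $8$-cycle, i.e. showing Frobenius has two fixed points rather than none. Above, this is supplied by Theorem \ref{thm:1}, since an $8$-cycle would force an irreducible factor of $G$ of degree $8$ or $24$, impossible for a divisor of $\hat H_{7,l}$. The route tying the theorem to Section 5 instead exhibits the two rational roots directly: writing $j=\overline{j(w)}$ for a CM point $w$ with $\wp_7^2\mid w$ (so $j$ is the supersingular reduction of a singular modulus, by the splittings $H_{-l}\equiv(X-1728)R^2$ and $H_{-4l}\equiv(X-1728)S^2$ or $T^2$ mod $l$), Theorem \ref{thm:3} puts $z(w/7)\in\Omega_f$, which reduces modulo a prime of residue degree $1$ above $l$ to a root of $\tilde F(\cdot,j)$ in $\mathbb{F}_l$, while $A(z(w/7))=z_1$ reduces to the second (Lemma \ref{lem:2}); Theorems \ref{thm:5} and \ref{thm:6} then identify the fibers, as $h(w/7)$ and $h(-1/w)$ generate $\Sigma_{\wp_7'}\Omega_f$ and $\Sigma_{\wp_7}\Omega_f$, in which $\mathfrak l=(\sqrt{-l})$ splits completely for $l\equiv 6$ (split cubics) and has order $3$ for $l\equiv 3,5$ (irreducible cubics). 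The routine but necessary bookkeeping is the resultant identity $G=\prod c_z$, separability, the avoidance of $z=8$ and the singular fibers, and the verification that $z\neq z_1$ except for the small primes $l\le 19$ already handled in Theorem \ref{thm:3}.
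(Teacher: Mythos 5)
Your proposal is correct, and for the harder half of the theorem --- actually producing the six linear (resp.\ two irreducible cubic) factors rather than merely bounding their number --- it takes a genuinely different route from the paper. The paper gets the lower bound from the class field theory of Section 5: it lifts the supersingular $j$ to a root $\mathfrak{j}$ of $H_{-d}$, shows that the minimal polynomials $\mu_1,\mu_2$ over $\Omega_f$ of conjugates of $h(w/7)$ and $h(-1/w)$ are coprime divisors of $G(x,\mathfrak{j})$ (via the linear disjointness of $\Sigma_{\wp_7'}\Omega_f$ and $\Sigma_{\wp_7}\Omega_f$ over $\Omega_f$, i.e.\ Theorems \ref{thm:5} and \ref{thm:6}), and reads off their reduction from the splitting of $\mathfrak{l}$ in those ray class fields; the Pellet--Stickelberger--Voronoi parity enters only in the upper bound, to make Frobenius a nontrivial element of $\textsf{PGL}(2,7)$ and hence give it at most two fixed points. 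You instead extract the exact fixed-point count from purely local data: the decomposition $G=\prod_z c_z$ over the roots of $\tilde F(\cdot,j)$, the degree restrictions of Theorem \ref{thm:1} (which confine the Frobenius cycle lengths on the eight $z$), and the full PSV parity, which together leave only the cycle type $1^2 2^3$ in case (a), and $1^2 2^3$ or $1^2 6$ in case (b), each with exactly two rational $z$. This is more economical --- Lemmas \ref{lem:1}, \ref{lem:2} and Theorems \ref{thm:3}, \ref{thm:5}, \ref{thm:6} are not needed for the count --- but it yields only the count, whereas the paper's construction also identifies the two rational $z$ as reductions of $z(w/7)$ and $A(z(w/7))$ and the two cubics as reductions of $\mu_1,\mu_2$, which is exactly what Theorem \ref{thm:9} and Corollary \ref{cor:1} use afterwards.

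One point to tighten in part (b): the degree restriction excludes cycle lengths $m\in\{4,5,7,8\}$ but does \emph{not} exclude $m=3$ (a $3$-cycle of $z$'s would force each $c_z$ to split over $\mathbb{F}_{l^3}$, since degree $9$ is forbidden, and would thus contribute three rational irreducible cubics), so the type $1^2 3^2$ survives that stage; your next sentence then quietly restricts to ``lengths in $\{1,2,6\}$.'' The conclusion is unaffected, because $1^2 3^2$ has four cycles and is eliminated by the PSV parity, but the argument should say ``lengths in $\{1,2,3,6\}$'' and discard $1^2 3^2$ explicitly for having an even number of cycles.
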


\begin{proof}
Let $0, 1728 \neq j \in \mathbb{F}_l$ be a supersingular $j$-invariant and let $d = l$ or $4l$ be chosen so that $H_{-d}(j) \equiv 0$ (mod $l$).  By the above congruences, there is a prime divisor $\mathfrak{p}$ of $\mathfrak{l}$ in $\Omega_f$ and a root $\mathfrak{j}$ of $H_{-d}(X)$ for which
$$j \equiv \mathfrak{j} \ (\textrm{mod} \ \mathfrak{p}).$$
By (\ref{eqn:7}), (\ref{eqn:5}) and (\ref{eqn:4.5}),
\begin{align*}
j(w/7) & = \frac{(z^2-3z+9)(z^2+229z+505)^3}{(z-8)^7}, \ \ z = z(w/7),\\
& = \frac{(h^2 - h + 1)^3(h^6 + 229h^5 + 270h^4 - 1695h^3 + 1430h^2 - 235h + 1)^3}{h(h - 1)(h^3 - 8h^2 + 5h + 1)^7}\\
& = j_{7,7}(h(w/7)),
\end{align*}
with $h = h(w/7)$, and we have $G(h(w/7),j(w/7)) = 0$.  Since $\mathfrak{j}$ is a conjugate of $j(w/7)$ over $K$, it follows that some conjugate $\eta^\sigma$ of $\eta = h(w/7)$ over $K$ is a root of $G(x, \mathfrak{j}) = 0$.  Hence, the minimal polynomial $\mu_1(x)$ of $\eta^\sigma$ over $\Omega_f$ divides $G(x, \mathfrak{j})$.  Using (\ref{eqn:6}), (\ref{eqn:1}), (\ref{eqn:4.5}) and (\ref{eqn:9.5}), we have that
\begin{align*}
j(w) & = \frac{(z^2-3z+9)(z^2-11z+25)^3}{(z-8)}, \ \ z = z(w/7),\\
& = \frac{(\eta^2 - \eta + 1)^3(\eta^6 -11\eta^5 + 30\eta^4 - 15\eta^3 -10\eta^2 +5\eta + 1)^3}{\eta^7(\eta - 1)^7(\eta^3 - 8\eta^2 + 5\eta + 1)}\\
& = j_7(\eta) = j_7(h(w/7)) = j_{7,7}(T_1(h(w/7)) = j_{7,7}(h(-1/w)) = j_{7,7}(\xi).
\end{align*}
This implies that the minimal polynomial $\mu_2(x)$ over $\Omega_f$ of some conjugate $\xi^{\sigma'}$ of $\xi = h(-1/w)$ divides $G(x, \mathfrak{j})$, as well.  Since $\Sigma_{\wp_7'} \Omega_f$ and $\Sigma_{\wp_7} \Omega_f$ are linearly disjoint over $\Omega_f$, Theorems 5 and 6 imply that $\mu_1(x)$ and $\mu_2(x)$ are relatively prime.  Thus, we have that
$$\mu_1(x) \mu_2(x) \mid G(x,\mathfrak{j}),$$
giving that
$$\mu_1(x) \mu_2(x) \mid G(x,j) \ \textrm{mod} \ \mathfrak{p}.$$
Now assume $l \equiv 6$ (mod $7$).  Then $\mathfrak{p} \mid \mathfrak{l}$, so that $\mathfrak{p}$ splits in both fields $\Sigma_{\wp_7'} \Omega_f$ and $\Sigma_{\wp_7} \Omega_f$.  Further, since $j \neq 0, 1728$ (mod $\mathfrak{p}$), the polynomial $G(x,j) \in \mathbb{F}_l[x]$ has distinct roots mod $\mathfrak{p}$.  Then both $\mu_i(x)$ split into linear factors mod $\mathfrak{p}$, so that $G(x,j)$ has at least $6$ linear factors modulo $\mathfrak{p}$, and therefore also modulo $l$. \medskip

If $x \in \mathbb{F}_l$ is any root of $G(x,j)$, then $z = \frac{x^3-3x+1}{x(x-1)} \in \mathbb{F}_l$ is a root of the minimal polynomial $m_d(X)$ of $z(w/7)$ over $\mathbb{Q}$, for which
$$j \equiv \frac{(z^2-3z+9)(z^2+229z+505)^3}{(z-8)^7} \ (\textrm{mod} \ l).$$
(Note that $G(0,j) = G(1,j) = 1$, so $x \not \equiv 0, 1$ (mod $l$).  Further, $z \not \equiv 8$ (mod $l$) since $x^3-8x+5x+1 \not \equiv 0$ (mod $l$), which follows from the fact that the resultant of $(x^2 - x + 1)(x^6 + 229x^5 + 270x^4 - 1695x^3 + 1430x^2 - 235x + 1)$ and $x^3-8x+5x+1$ equals $7^{14}$.)
Hence, $z$ is a root of the congruence
$$G_1(z,j) = (z^2-3z+9)(z^2+229z+505)^3-j(z-8)^7 \ (\textrm{mod} \ l).$$
Further,
$$(z-8)^8 G_1\left(\frac{8z-15}{z-8},j \right) = 7^{14} \{(z^2-3z+9)(z^2-11z+25)^3 - j(z-8)\} = 7^{14} F(z,j).$$
Different orbits of roots in $\mathbb{F}_l$ of $G(x,j)$ with respect to the group $\langle \phi \rangle$ correspond to different roots $z \in \mathbb{F}_l$ of $G_1(z,j) \equiv 0$ (mod $l$). The discussion following Lemma \ref{lem:3} shows that $F(z,j) \equiv 0$ (mod $l$) can have no more than two roots in $\mathbb{F}_l$, so the same is true for $G_1(z,j) \equiv 0$ and this implies that $G(x,j)$ can have no more than $6$ linear factors modulo $l$.  This proves part (a). \medskip

To prove part (b), we just have to note that $G(x,j)$ can have no linear factors (mod $l$) when $l \equiv 3, 5$ (mod $7$), by Theorem \ref{thm:1}.  Hence, the factors $\mu_1(x), \mu_2(x)$ in the above proof must remain irreducible modulo $l$, showing that $G(x,j)$ has at least two distinct irreducible cubic factors modulo $l$.  Furthermore, any irreducible cubic factor of $\hat H_{7,l}(x)$ -- and hence of $G(x,j)$ -- over $\mathbb{F}_l$ has the form
$$x^3 +ax^2 - (a+3)x+1 = x^3-3x+1+a(x^2-x),$$
by \cite[Thm. 5.3]{mor1}.  Hence, $z = \frac{x^3-3x+1}{x(x-1)} \equiv -a \in \mathbb{F}_l$, from which we conclude that $G(x,j)$ can have no more than two irreducible cubic factors (mod $l$).  This completes the proof.
\end{proof}

Combined with Propositions 2 and 4, this theorem yields a proof of Conjecture \ref{conj:1}. \medskip

\noindent {\it Proof of Conjecture \ref{conj:1}.} By Proposition \ref{prop:2}, when $l \equiv 6$ (mod $7$), each of the terms
$$f_0(x) = (x^2-x+1)(x^6-11x^5+30x^4-15x^3-10x^2+5x+1)$$
and
\begin{align*}
f_{1728}(x) = & x^{12}-18x^{11}+117x^{10}-354x^9+570x^8-486x^7+273x^6\\
& \ -222x^5+174x^4-46x^3-15x^2+6x+1
\end{align*}
in the formula for $\hat H_{7,l}(x)$ has, when supersingular, exactly $6$ linear factors mod $l$.  The same holds for $G(x,j)$ by Theorem \ref{thm:7}(a), and therefore also for the factors
\begin{align*}
H(x,j) =& \  (x^2-x+1)^3(x^6-11x^5+30x^4-15x^3-10x^2+5x+1)^3\\
& \ -j x^7(x-1)^7(x^3-8x^2+5x+1)
\end{align*}
of $x^{7n_l}(x-1)^{7n_l}(x^3-8x^2+5x+1)^{n_l}J_l(j_7(x))$.  This is because
\begin{equation*}
((1-r)x-1)^{24}G(T_1(x),j) = 7^{14} \varepsilon H(x,j),
\end{equation*}
where $\varepsilon = 413283046371r^2 - 290993856257r - 56645954512 = r^8(r-1)^8$ is a unit in $\mathbb{Q}(r)$.   Note that  $x^3-8x^2+5x+1$ splits mod $l$ in this case, so linear factors of $G(x,j)$ are in $1-1$ correspondence with linear factors of $H(x,j)$.  Since we have $6$ linear factors of $\hat H_{7,l}(x)$ (mod $l$) for each supersingular $j$-invariant in characteristic $l$, this proves Conjecture \ref{conj:1}A. \medskip

To prove Conjecture \ref{conj:1}B, we note that if $g(x) = x^3+ax^2-(a+3)x+1$, then
$$((1-r)x-1)^3 g(T_1(x)) = r(1-r)(a+8)\left\{x^3-\left(\frac{8a+15}{a+8}\right)x^2+\left(\frac{5a - 9}{a + 8}\right) x+1\right\},$$
where
$$\frac{5a - 9}{a + 8} = -\left(-\frac{8a+15}{a+8}\right)-3.$$
Since $a \not \equiv -8$ (mod $l$), the irreducible cubic factors of $G(x,j)$ over $\mathbb{F}_l$ are in $1-1$ correspondence with the irreducible cubic factors of $H(x,j)$.  Now part Conjecture \ref{conj:1}B follows from Proposition \ref{prop:4} and Theorem \ref{thm:7}(b).
$\square$ \bigskip

\section{A modular factorization.}

As in \cite[Thm. 4.1]{mor1}, Theorem \ref{thm:7} allows us to prove the following.

\begin{thm} Assume $l \neq 7$ is a prime satisfying $l > 3$. \smallskip

(a) If $l \equiv 6$ (mod $7$), the minimal polynomial $P_d(x)$ of $h(w/7)$ over $\mathbb{Q}$ satisfies
$$P_d(x) \equiv \prod_{i=1}^{3h}{(x-a_i)^2} \ (\textrm{mod} \ l),$$
where the $a_i$ are distinct (mod $l$) and $h=\textsf{h}(-d)$, $d = l$ or $4l$, is the class number of the order $\textsf{R}_{-d}$ of discriminant $-d$ in $K = \mathbb{Q}(\sqrt{-l})$. \smallskip

\noindent (b) If $l \equiv 3,5$ (mod $7$), $P_d(x)$ factors into a product of the squares of $\textsf{h}(-d)$ distinct irreducible cubics (mod $l$).
\label{thm:9}
\end{thm}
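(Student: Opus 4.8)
The plan is to establish two features of the reduction $\bar P_d := P_d(x) \bmod l$ separately: that it is a perfect square, and that its square root splits into distinct linear factors in case (a) and distinct irreducible cubics in case (b). The squareness I would get from a ramification (inertia) argument, and the splitting type from Theorem \ref{thm:7} together with Theorem \ref{thm:1} and Propositions \ref{prop:2}, \ref{prop:4}.

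First I would prove $\bar P_d$ is a perfect square. Let $N$ be the Galois closure of $\mathbb{Q}(h(w/7)) = \Sigma_{\wp_7'}\Omega_f$ over $\mathbb{Q}$; since $\Sigma_{\wp_7'}$ and its conjugate $\Sigma_{\wp_7}$ lie in $\Sigma_7\Omega_f$, we have $N \subseteq \Sigma_7\Omega_f$. The conductors $\wp_7\wp_7'$ and $f \in \{1,2\}$ are prime to $l$, so $l$ is unramified in $N/K$, while $l$ ramifies in $K/\mathbb{Q}$. Hence at a prime $\mathfrak{P}\mid l$ of $N$ the inertia group is $I_\mathfrak{P} = \{1,\tau\}$ with $\tau$ nontrivial on $K$. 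Because $\Omega_f$ is normal over $\mathbb{Q}$ and contains $K$, every root $\alpha = h(w/7)^\sigma$ of $P_d$ generates a field $\Sigma_{\wp_7'}^{\sigma}\Omega_f \supseteq K$, so $\tau$ fixes no root; thus $\tau$ is a fixed-point-free involution on the $6h$ roots. Since inertia acts trivially on the residue field, $\alpha \equiv \tau\alpha \pmod{\mathfrak{P}}$, and therefore $\bar P_d = Q(x)^2$ with $Q = \prod(x-\bar\alpha)$ of degree $3h$, the product over the $3h$ pairs $\{\alpha,\tau\alpha\}$.

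Next I would identify the factors of $Q$. By Theorem \ref{thm:4}, $P_d(x) = x^{2h}(x-1)^{2h}\,m_d\bigl(\frac{x^3-3x+1}{x(x-1)}\bigr)$, and the same inertia argument applied to the generator $z(w/7)$ of $\Omega_f$ gives $\bar m_d(y) = \bar q(y)^2$ with $\deg\bar q = h$, so that $Q(x) = \prod_b g_b(x)$ runs over the roots $b$ of $\bar q$, where $g_b(x) = x^3 - bx^2 + (b-3)x + 1$ has exactly the shape of the cubic factors of $\hat H_{7,l}(x)$. Each $b = z(w/7)^\sigma \bmod \mathfrak{P}$ lies in $\mathbb{F}_l$, as $\mathfrak{l}$ splits completely in $\Omega_f/K$, and by (\ref{eqn:7}) it determines a supersingular invariant $j \in \mathbb{F}_l$ with every root of $g_b$ a root of $G(x,j) \bmod l$. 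Being invariant under $\phi(x)=\frac{1}{1-x}$, the cubic $g_b$ has its three roots in a single $\phi$-orbit, so it either splits completely or is irreducible; Theorem \ref{thm:1} excludes irreducible cubic factors when $l\equiv 6$ and (together with $\phi$-invariance) excludes linear factors when $l\equiv 3,5$, forcing complete splitting in the first case and irreducibility in the second. The invariants $j = 0,1728$, where Theorem \ref{thm:7} does not apply, are handled instead by Propositions \ref{prop:2} and \ref{prop:4} on $f_0$ and $f_{1728}$, transported to $G(x,j)$ through the identity $((1-r)x-1)^{24}G(T_1(x),j) = 7^{14}\varepsilon H(x,j)$ used in the proof of Conjecture \ref{conj:1}.

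Finally, distinct roots $b \neq b'$ give coprime cubics $g_b, g_{b'}$: a common root would force $(b'-b)(x^2-x)=0$, i.e. $x\in\{0,1\}$, but $g_b(0)=1$ and $g_b(1)=-1$. Granting that $\bar q$ is separable, this yields $\bar P_d = \prod_{i=1}^{3h}(x-a_i)^2$ with the $a_i$ distinct in case (a) and $\bar P_d = \bigl(\prod_{i=1}^{h} c_i\bigr)^2$ with distinct irreducible cubics $c_i$ in case (b). The main obstacle is precisely this separability of $\bar q$: a repeated value $b$ would make $g_b$ occur to the fourth power in $\bar P_d$, hence contribute strictly more than the six linear (resp. two cubic) factors that Theorem \ref{thm:7} permits for the single invariant $j$ attached to $b$ by (\ref{eqn:7}). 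Ruling this out is exactly where the sharp counts ``exactly six'' and ``exactly two'' of Theorem \ref{thm:7} and Propositions \ref{prop:2}, \ref{prop:4} are needed, together with the fact that the class-equation reductions $R,S,T$ consist of distinct linear factors, so that distinct supersingular $j$-invariants cannot collide; the bookkeeping at $j = 0,1728$ is the part requiring the most care.
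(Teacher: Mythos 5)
Your architecture is attractive and genuinely different in organization from the paper's: you pass to the square root $Q=\prod_b g_b(x)$ of $\bar P_d$ indexed by the roots $b$ of the square root $\bar q$ of $\bar m_d$, determine the splitting type of each cubic $g_b$ from Theorem \ref{thm:1} plus $\phi$-invariance, and observe that distinct $b$ give coprime $g_b$. Those steps are essentially sound (one small omission: ``the three roots of $g_b$ form a single $\phi$-orbit'' requires excluding roots of $g_b$ among the fixed points of $\phi$, i.e.\ roots of $x^2-x+1$; this happens only when $b^2-3b+9\equiv 0$, which forces $j\equiv 0$ to be supersingular while $\left(\frac{-3}{l}\right)=+1$, a contradiction, so it is repairable). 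The paper instead works directly with the $\mathcal{G}_7$-orbits of roots of $P_d$ and gets the splitting type from the class-field-theoretic splitting of $\mathfrak{l}$ in $\Sigma_{\wp_7'}\Omega_f$ via Theorem \ref{thm:7}.

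The genuine gap is in the step you yourself flag as the main obstacle: the multiplicity analysis. The mechanism you propose --- that a repeated root $b$ of $\bar q$ gives $g_b^4\mid\bar P_d$, ``contributing strictly more than the six linear factors that Theorem \ref{thm:7} permits'' --- does not yield a contradiction as stated: $g_b^4$ contributes only three \emph{distinct} linear factors, and Theorem \ref{thm:7} counts distinct factors of the fixed polynomial $G(x,j)$, which has its six roots whether or not all of them divide $\bar P_d$. What is actually needed is a degree budget per supersingular $j$: for $j\neq 1728$ exactly two roots of $H_{-d}$ reduce to $j$ (since $R,S,T$ are separable), so the fiber of $\bar m_d$ has total multiplicity $4$, supported on exactly two values $b\neq b'$ (at least two from the coprimality mod $\mathfrak{p}$ of $\mu_1,\mu_2$ in the proof of Theorem \ref{thm:7}, at most two from the $\textsf{PGL}(2,7)$/PSV bound), and the perfect-square constraint forces the split $2+2$. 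More seriously, the fiber over $j=1728$ is structurally different and your uniform treatment would fail there: $1728$ occurs to the \emph{first} power in $\bar H_{-d}$, so its fiber has total multiplicity $2$ and supports only \emph{one} value of $b$; each of $P_l$ and $P_{4l}$ therefore acquires only $3$ of the $6$ linear factors of $f_{1728}$, and if every fiber contributed two values of $b$ you would obtain $h+1>h=\deg\bar q$ values, a contradiction. Propositions \ref{prop:2} and \ref{prop:4}, which you cite for $j=0,1728$, control $f_0$ and $f_{1728}$ themselves but not how their factors distribute between $P_l$ and $P_{4l}$; the paper settles exactly this point with its final squeezing argument (at least $3h-3$ distinct roots away from $1728$, at least $3$ over $1728$, at most $3h$ in all), for which your proposal has no counterpart. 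Until that bookkeeping is supplied, the exact counts $3h$ and $h$ are not established.
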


\begin{proof}
(a) The same argument used in the proof of \cite[Thm. 3.1]{mor1} shows that $P_d(x)$ factors into the squares of $3h = 3h(-d)$ linear factors (mod $l$).  To show that these $3h$ linear factors are distinct, consider the minimal polynomials $\mu_1(x), \mu_2(x)$ over $\Omega_f$ of $\eta^\sigma$ and $\xi^{\sigma'}$ occurring in the proof of Theorem \ref{thm:7}.  These polynomials divide $P_d(x)$ (the minimal polynomial of $\eta$ and $\xi$ over $\mathbb{Q}$) and are relatively prime (mod $\mathfrak{p}$), where $\mathfrak{p}$ is any prime divisor of $\mathfrak{l}$ in $\Omega_f$.  If $l \equiv 6$ (mod $7$), the proof of Theorem \ref{thm:7}(a) shows that $P_d(x)$ has $6$ distinct linear factors modulo $\mathfrak{p}$ and therefore modulo $l$, for each supersingular $j$-invariant $j \not \equiv 0, 1728$ (mod $l$).  Clearly, linear factors corresponding to distinct values of $j$ are distinct from each other (mod $l$). \medskip

Furthermore, when $j=0$ is supersingular in characteristic $l$, at least one of the roots of the polynomial
$x^6-11x^5+30x^4-15x^3-10x^2+5x+1$ (mod $l$) is a root of $P_d(x)$ (mod $l$), by the same argument as in the proof of Theorem \ref{thm:7}(a).  (Take $\mathfrak{j}$ to be a root of $H_{-4l}(X)$ which is divisible by a prime divisor $\mathfrak{p}$ of $\mathfrak{l}$.  Note that $j=0$ is never a root of $H_{-l}(X)$ when $l > 3$ and $l \equiv 3$ (mod $4$).  See \cite[p. 236]{mor1}.)  The same holds for $j=1728$ and one of the roots (mod $l$) of the polynomial $f_{1728}(x)$ from Proposition \ref{prop:2}(b).
\medskip

Now we use the fact that the set of roots of $P_d(x)$ is invariant under the action of the group
$$\mathcal{G}_7 = \langle \phi, T_1 \rangle = \langle T_1, T_2, T_3 \rangle = \{1, \phi, \phi^2, T_1, T_2, T_3\},$$
by Theorem \ref{thm:4}.  Note that the fixed points of $\phi(x) = \frac{1}{1-x}$ and $\phi^2(x)$ are the roots of $x^2-x+1 = 0$, and the fixed points of $T_1(x) = \frac{x-r}{(1-r)x-1}$ are
$$x = \frac{1}{7}r^2 - \frac{10}{7}r + \frac{11}{7} = \eta, \ \ -\frac{15}{7}r^2 + \frac{108}{7}r + \frac{17}{7} = -6\eta^2 + 3\eta + 14,$$
which are roots of $x^3 - x^2 - 2x + 1$ and $x^3 - 15x^2 + 12x + 1$, respectively.  (Note also that $j_7(\eta) = -3375 = -15^3$ is the $j$-invariant for the discriminant $D = -7$ and $j_7(-6\eta^2 + 3\eta + 14) = 255^3$ is the $j$-invariant for the discriminant $D = -28$; see equation (\ref{eqn:1}).)  Furthermore,
\begin{align*}
& \textrm{Res}_x(x^2 - x + 1, x^3 - x^2 - 2x + 1) = 7,\\
& \textrm{Res}_x(x^2 - x + 1, x^3 - 15x^2 + 12x + 1) = 3^3 \cdot 7,
\end{align*}
so that no element of $\mathbb{F}_l$ is fixed by all the elements of $\mathcal{G}_7$.  In addition, roots of $x^2-x+1$ correspond to $j = 0$ and discriminant $D = -3$, where $j=0$ is supersingular if and only if $\left(\frac{-3}{l}\right) = -1$.  Hence, the roots of $x^2-x+1$ never lie in $\mathbb{F}_l$ (or in $\mathbb{F}_{l^3}$) when $j=0$ is supersingular.  Certainly, no two of the mappings $T_i$ have the same fixed points, since the fixed points for different $T_i$ are conjugate over $\mathbb{Q}$ and
$$\textrm{disc}(x^3 - x^2 - 2x + 1) = 7^2, \ \ \textrm{disc}(x^3 - 15x^2 +12x + 1) = 3^6 \cdot 7^2,$$
neither of which is divisible by $l$.  (Alternatively, any two of the maps $T_i, T_j$ generate $\mathcal{G}_7$.)  It follows that the stabilizer in $\mathcal{G}_7$ of a root of $P_d(x)$ in $\mathbb{F}_l$ is either $\{1\}$ or $\{1, T_i\}$ for some $i$, hence the orbit of such a root consists of either $6$ or $3$ elements.  We also have
\begin{align*}
\textrm{Res}_x(x^6 - 11x^5 + 30x^4 - 15x^3 - 10x^2 + 5x + 1, &x^3 - x^2 - 2x + 1) = 3^3 \cdot 5^3,\\
\textrm{Res}_x(x^6 - 11x^5 + 30x^4 - 15x^3 - 10x^2 + 5x + 1, &x^3 - 15x^2 +12x + 1)\\
& \ \ \ = 5^3 \cdot 17^3,
\end{align*}
so that roots of $f_0(x) \equiv 0$ (mod $l$) in (\ref{eqn:10}) are not fixed by any of the maps $T_i$.  Thus, when $l \equiv 2$ (mod $3$) and $6$ (mod $7$), there are $6$ roots of $P_d(x) \equiv 0$ (mod $l$) corresponding to $j=0$.  \medskip

Now we can repeat the argument in the proof of \cite[Thm. 4.1]{mor1} practically verbatim.  By the congruences for $H_{-d}(X)$ given before Theorem \ref{thm:7}, there are either $\frac{\textsf{h}(-l)-1}{2}, \frac{\textsf{h}(-4l)-1}{2},$ or $\frac{\textsf{h}(-4l)}{2}$ distinct supersingular $j$-invariants, different from $j=1728$, which are roots of $H_{-d}(x) \equiv 0$ (mod $l$), depending on whether $d=l, 4l$ (with $l \equiv 3$ mod $4$) or $d=4l$ and $l \equiv 1$ modulo $4$.  Thus, $P_d(x) \equiv 0$ has at least
\begin{align*}
6\frac{\textsf{h}(-l)-1}{2} &= 3\textsf{h}(-l)-3, \ (\textrm{when} \ d=l),\\
6\frac{\textsf{h}(-4l)-1}{2} &= 3\textsf{h}(-4l)-3, \ (\textrm{when} \ d=4l, \ l \equiv 3 \ \textrm{mod} \ 4), \ \textrm{or}\\
6\frac{\textsf{h}(-4l)}{2} &=3\textsf{h}(-4l), \ (\textrm{when} \ d=4l, \ l \equiv 1 \ \textrm{mod} \ 4),
\end{align*}
distinct roots corresponding to supersingular $j$-invariants $j \not \equiv 1728$ (mod $l$).  There are at least three roots of $P_d(x) \equiv 0$ in $\mathbb{F}_l$ corresponding to $j=1728$, when $l \equiv 3$ (mod $4$) (since the orbit under $\mathcal{G}_7$ of a root of $P_d(x)$ in $\mathbb{F}_l$ corresponding to $j=1728$ has at least $3$ elements).  On the other hand, there cannot be more than three roots each for the polynomials $P_l(x)$ and $P_{4l}(x)$, because $P_d(x)$ has no more than $3\textsf{h}(-d)$ distinct roots (mod $l$).  Thus, both polynomials have exactly $3$ roots corresponding to $j=1728$, and this proves that $P_d(x) \equiv 0$ has $3\textsf{h}(-d)$ distinct roots in $\mathbb{F}_l$ in all cases.  \medskip

(b) For the primes $l = 5, 17$ we have $d = 20, 68$, and
\begin{align*}
P_{20}(x) & = x^{12} - 2x^{11} - 19x^{10} - 10x^9 + 530x^8 - 1430x^7 + 1377x^6\\
& \ \ - 302x^5 - 250x^4 + 90x^3 + 25x^2 - 10x + 1\\
& \equiv (x^3 + 2x + 1)^2 (x^3 + 4x^2 + 3x + 1)^2 \ (\textrm{mod} \ 5)
\end{align*}
and
\begin{align*}
P_{68}(x) & = x^{24} - 12x^{23} + 1550x^{22} - 36832x^{21} + 391173x^{20} - 2647784x^{19}\\
& \ \ + 14397706x^{18} - 66497132x^{17} + 241323226x^{16} - 626494580x^{15}\\
& \ \ + 1111176102x^{14} - 1314407496x^{13} + 999381181x^{12} - 446384584x^{11}\\
& \ \ + 89881766x^{10} - 2645876x^9 + 7200858x^8 - 5340652x^7 + 495178x^6\\
& \ \ + 235800x^5 - 34875x^4 + 3744x^3 + 1550x^2 - 12x + 1\\
& \equiv (x^3 + 4x^2 + 10x + 1)^2 (x^3 + 16x^2 + 15x + 1)^2 (x^3 + 6x^2 + 8x + 1)^2\\
& \ \ \times (x^3 + 2x^2 + 12x + 1)^2 \ (\textrm{mod} \ 17).
\end{align*}
Note that each of the cubics in these two congruences has the form $x^3 + ax^2 - (a+3)x + 1$, in agreement with \cite[Thm. 5.3]{mor1}.  Note also that $f(x) = x^3 + ax^2 - (a+3)x + 1$ satisfies
$$(x-1)^3 f(\phi(x)) = (x-1)^3 f\left(\frac{1}{1-x}\right) = f(x),$$
so irreducible cubic factors have orbits of size $1$ or $2$ under the action of $\mathcal{G}_7$. \smallskip

For primes $l \neq 5, 17$, the same arguments as in the proof of (a) show that $P_d(x)$ is divisible by two distinct irreducible  cubics over $\mathbb{F}_l$, for each supersingular $j \not \equiv 1728$ (mod $l$).  Hence, the number of irreducible cubics dividing $P_d(x)$ (mod $l$) is at least
\begin{align*}
2\frac{\textsf{h}(-l)-1}{2} &= \textsf{h}(-l)-1, \ (\textrm{when} \ d=l),\\
2\frac{\textsf{h}(-4l)-1}{2} &= \textsf{h}(-4l)-1, \ (\textrm{when} \ d=4l, \ l \equiv 3 \ \textrm{mod} \ 4), \ \textrm{or}\\
2\frac{\textsf{h}(-4l)}{2} &=\textsf{h}(-4l), \ (\textrm{when} \ d=4l, \ l \equiv 1 \ \textrm{mod} \ 4).
\end{align*}
But there is at least one irreducible cubic corresponding to $j=1728$ which divides $P_d(x)$ for $l \equiv 3$ (mod $4$) and $d=l$ or $d=4l$.  The above formulas show that there can only be one such factor for each $d$.  This proves (b).
\end{proof}

\newtheorem{cor}{Corollary}

\begin{cor} If $\left(\frac{-l}{7}\right) = +1$, and $q_d(x)$ is the minimal polynomial of $\eta = h(w/7)$ over $K = \mathbb{Q}(\sqrt{-l})$, where $w = \frac{v+\sqrt{-d}}{2} \equiv 0$ (mod $\wp_7^2$), as in Theorem \ref{thm:5}, then the prime divisor $\mathfrak{l}$ of $K$ does not divide $\textrm{disc}(q_d(x))$, so the powers of $\eta$ form an $\mathfrak{l}$-integral basis of $\Sigma_{\wp_7'} \Omega_f$ over $K$.
\label{cor:1}
\end{cor}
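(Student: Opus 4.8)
The plan is to read off the separability of $q_d(x)$ modulo $\mathfrak{l}$ directly from the factorization of $P_d(x)$ modulo $l$ supplied by Theorem~\ref{thm:9}, and then to conclude via the local index--discriminant relation over the Dedekind domain $R_K$. The whole point is that the \emph{squares} appearing in Theorem~\ref{thm:9} are an artifact of the ramification $\mathfrak{l}^2 = (l)$, and that after passing from $\mathbb{Q}$ up to $K$ these squares become honest squares of the $K$-minimal polynomial, which is therefore squarefree mod $\mathfrak{l}$.

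First I would record the basic setup. The element $\eta = h(w/7)$ is an algebraic integer, since it satisfies the monic cubic $x^3 - z x^2 + (z-3)x + 1 = 0$ with $z = z(w/7) \in R_{\Omega_f}$, so its minimal polynomial $q_d(x)$ over $K$ is monic with coefficients in $R_K$. By Theorem~\ref{thm:5}, $\mathbb{Q}(\eta) = \Sigma_{\wp_7'}\Omega_f =: L$ with $K \subset L$, and $\deg q_d = [L:K] = 3h$, while $\deg P_d = 6h$ by Theorem~\ref{thm:4}. Because $P_d(x) \in \mathbb{Z}[x]$ is the minimal polynomial of $\eta$ over $\mathbb{Q}$, the full set of $\mathbb{Q}$-conjugates of $\eta$ splits into the $K$-conjugates and their images under the nontrivial automorphism $\rho$ of $K/\mathbb{Q}$, whence
$$P_d(x) = q_d(x)\,q_d^\rho(x) \quad \textrm{in } R_K[x],$$
where $q_d^\rho$ is obtained by applying $\rho$ to the coefficients of $q_d$.

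The key observation is that $\rho$ acts trivially on the residue field $R_K/\mathfrak{l} \cong \mathbb{F}_l$, since $l$ ramifies in $K$. Indeed, writing $R_K = \mathbb{Z}[\omega]$, for $a = x + y\omega \in R_K$ one has $a - a^\rho = y(\omega - \omega^\rho) \in (\sqrt{-l}) \subseteq \mathfrak{l}$, so $a \equiv a^\rho \pmod{\mathfrak{l}}$; hence $q_d^\rho \equiv q_d \pmod{\mathfrak{l}}$. Reducing the displayed factorization therefore gives
$$P_d(x) \equiv \bar{q}_d(x)^2 \pmod{\mathfrak{l}},$$
where $\bar{q}_d$ is the reduction of $q_d$ mod $\mathfrak{l}$; and since $P_d$ has rational integer coefficients and $\mathbb{Z}/(l) \cong R_K/\mathfrak{l} = \mathbb{F}_l$, this reduction agrees with $P_d \bmod l$. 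Comparing with Theorem~\ref{thm:9}, in case (a) ($l \equiv 6 \bmod 7$) we obtain $\bar{q}_d = \prod_{i=1}^{3h}(x - a_i)$ with the $a_i$ distinct, and in case (b) ($l \equiv 3,5 \bmod 7$) we obtain $\bar{q}_d$ equal to a product of $h$ distinct irreducible cubics. In both cases $\bar{q}_d$ is squarefree, so $\mathfrak{l} \nmid \textrm{disc}(q_d)$; note that the hypothesis $\left(\frac{-l}{7}\right) = +1$ is exactly the disjunction $l \equiv 3,5,6 \pmod 7$ covered by Theorem~\ref{thm:9}.

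Finally I would invoke the local index--discriminant relation for $L = \Sigma_{\wp_7'}\Omega_f$, namely $v_{\mathfrak{l}}(\textrm{disc}(q_d)) = 2\,v_{\mathfrak{l}}\big([R_L : R_K[\eta]]\big) + v_{\mathfrak{l}}(\mathfrak{d}_{L/K})$. Since $\mathfrak{l} \nmid \textrm{disc}(q_d)$, the prime $\mathfrak{l}$ divides neither the relative discriminant nor the index $[R_L : R_K[\eta]]$, so $R_K[\eta]$ and $R_L$ coincide after localizing at $\mathfrak{l}$; equivalently, $1, \eta, \dots, \eta^{3h-1}$ form an $\mathfrak{l}$-integral basis of $\Sigma_{\wp_7'}\Omega_f$ over $K$, as claimed. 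The only genuinely delicate point is the justification of $P_d = q_d\,q_d^\rho$ together with the triviality of $\rho$ on the residue field; once these are established, the separability of $\bar{q}_d$ — and hence the entire corollary — follows immediately from Theorem~\ref{thm:9}.
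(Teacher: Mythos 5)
Your proposal is correct and follows essentially the same route as the paper: the paper's proof is precisely the factorization $P_d(x) = q_d(x)\,\bar q_d(x) \equiv q_d(x)^2 \pmod{\mathfrak{l}}$ (using that $l$ ramifies in $K$) combined with Theorem \ref{thm:9} to get separability of $q_d$ mod $\mathfrak{l}$, and then the standard index--discriminant relation. You have merely spelled out in more detail the steps the paper compresses into one displayed line and a reference to the analogous result for $E_5$.
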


\begin{proof} This follows in the same way as \cite[Thm 4.2]{mor1}.  Namely,
$$P_d(x) = q_d(x) \bar q_d(x) \equiv q_d(x)^2 \ (\textrm{mod} \ \mathfrak{l}),$$
and the theorem implies that $q_d(x)$ has distinct roots modulo $\mathfrak{l}$.  This proves the assertions.
\end{proof}

In addition to the examples $P_{20}(x)$ and $P_{68}(x)$ mentioned in the proof of Theorem \ref{thm:9}, we note the examples
\begin{align*}
P_{52}(x) & = x^{12} - 58x^{11} + 1645x^{10} - 20442x^9 + 112498x^8 - 250342x^7 + 247929x^6\\
& \ \ - 92782x^5 - 6530x^4 + 6962x^3 + 1073x^2 + 46x + 1\\
& \equiv (x + 2)^2 (x + 4)^2 (x + 5)^2 (x + 7)^2 (x + 8)^2 (x + 10)^2 \ (\textrm{mod} \ 13),
\end{align*}
where $\textsf{h}(-4 \cdot 13) = 2$; and
\begin{align*}
P_{83}(x) & = x^{18} - 137x^{17} + 6765x^{16} - 116316x^{15} + 953694x^{14} - 4517362x^{13}\\
& \ \  + 14472274x^{12} - 31178560x^{11} + 43709339x^{10} - 38751299x^9\\
& \ \  + 20853979x^8 - 6393600x^7 + 1092050x^6 - 92082x^5 - 69346x^4\\
& \ \ + 25892x^3 + 4589x^2 + 119x + 1\\
& \equiv (x + 18)^2 (x + 22)^2 (x + 27)^2 (x + 35)^2 (x + 42)^2 (x + 48)^2 (x + 53)^2\\
& \ \ \times (x + 63)^2 (x + 80)^2 \ (\textrm{mod} \ 83),
\end{align*}
where $\textsf{h}(-83) = 3$.

\section{Appendix.}

Let
$$\eta(\tau)=q^{1/24} \prod_{n \ge 1}{(1-q^n)}, \ \ q = e^{2\pi i \tau},$$
be the Dedekind $\eta$-function.  Also, let
$$h(\tau) = q^{-1} \prod_{n \ge 1}{\frac{(1-q^{7n-3})(1-q^{7n-4})(1-q^{7n-2})^2(1-q^{7n-5})^2}{(1-q^{7n-1})^3(1-q^{7n-6})^3}},$$
a modular function for $\Gamma_1(7)$.  See \cite[p. 156]{du}.

\begin{thm} For $\tau$ in the upper half-plane,
\begin{equation}
\label{eqn:13} \left(\frac{\eta(\tau)}{\eta(7\tau)}\right)^4=\frac{h^3-8h^2+5h+1}{h(h-1)}, \ \ h=h(\tau).
\end{equation}
Also, if $d(\tau)=h(\frac{-1}{7\tau})$, then
\begin{equation}
\label{eqn:14} 49\left(\frac{\eta(7\tau)}{\eta(\tau)}\right)^4=\frac{d^3-8d^2+5d+1}{d(d-1)}, \ \ d=d(\tau).
\end{equation}
\label{thm:10}
\end{thm}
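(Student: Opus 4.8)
The plan is to prove (\ref{eqn:13}) first and then obtain (\ref{eqn:14}) from it by applying the Fricke involution $\tau \mapsto -1/(7\tau)$. Throughout I abbreviate $L(\tau) = \left(\frac{\eta(\tau)}{\eta(7\tau)}\right)^4$ and work in the modular function fields over $k = \mathbb{Q}(\zeta_7)$ set up in Section 4, taking as given Duke's description that $\textsf{K}_{\Gamma_1(7)} = k(h)$ with $h$ a Hauptmodul, that the cyclic cubic group $\mathrm{Gal}(\textsf{K}_{\Gamma_1(7)}/\textsf{K}_{\Gamma_0(7)})$ is generated by $\phi\colon h \mapsto 1/(1-h)$, and Schertz's statement that $L$ is a Hauptmodul for $\Gamma_0(7)$.

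First I would record the elementary identity $h + \phi(h) + \phi^2(h) = \frac{h^3-3h+1}{h(h-1)}$, obtained from $\phi(h) = 1/(1-h)$ and $\phi^2(h) = (h-1)/h$ by adding over a common denominator. Writing $z = \frac{h^3-3h+1}{h(h-1)}$, this exhibits $z$ as fixed by $\phi$, so $z \in \textsf{K}_{\Gamma_0(7)}$; since $L$ is a Hauptmodul, $\textsf{K}_{\Gamma_0(7)} = k(L)$, and hence $z = f(L)$ for some rational $f$. The key step is to locate the poles of $z$. Viewed on $X_0(7)$, the function $z$ can be infinite only where $h \in \{0,1,\infty\}$; but $\{0,1,\infty\}$ is a single $\phi$-orbit (indeed $\phi(\infty)=0$, $\phi(0)=1$, $\phi(1)=\infty$), and because $h$ is an isomorphism $X_1(7)\xrightarrow{\sim}\mathbb{P}^1$ these three values are attained at exactly three distinct points forming one fibre of the covering $X_1(7)\to X_0(7)$. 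That fibre lies over the cusp $\infty$, where $h(\tau)=q^{-1}+\cdots$ is infinite and $L$ is likewise infinite. Consequently the only pole of $z$ on $X_0(7)$ is at $L=\infty$, so $f$ is a polynomial.

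Its degree is pinned down by the leading behaviour as $\tau\to i\infty$: from $h = q^{-1}+3+4q+\cdots$ one computes $z = q^{-1}+4+2q+\cdots$, while Schertz's expansion gives $L = q^{-1}-4+2q+\cdots$. Thus $z$ has a simple pole matching that of $L$, forcing $f$ to be linear, $z = aL+b$; comparing the coefficients of $q^{-1}$ and of $q^0$ yields $a=1$ and $b=8$. Therefore $L = z - 8 = \frac{h^3-3h+1}{h(h-1)} - 8 = \frac{h^3-8h^2+5h+1}{h(h-1)}$, which is (\ref{eqn:13}).

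For (\ref{eqn:14}) I would substitute $\tau \mapsto -1/(7\tau)$ in (\ref{eqn:13}). On the right the argument becomes $h(-1/(7\tau)) = d(\tau)$, reproducing the same rational function in $d$. On the left, the transformation formula $\eta(-1/\tau)=\sqrt{\tau/i}\,\eta(\tau)$ gives $\eta(-1/(7\tau)) = \sqrt{7\tau/i}\,\eta(7\tau)$ and $\eta(-1/\tau) = \sqrt{\tau/i}\,\eta(\tau)$, so that $L(-1/(7\tau)) = \left(\sqrt{7}\,\tfrac{\eta(7\tau)}{\eta(\tau)}\right)^4 = 49\left(\frac{\eta(7\tau)}{\eta(\tau)}\right)^4$, giving (\ref{eqn:14}). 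The main obstacle is the pole-counting in the third paragraph above: everything reduces to the assertion that $h=0,1,\infty$ occur only over the cusp $\infty$ of $X_0(7)$, which is precisely where the genus-zero, Hauptmodul status of $h$ and the explicit $\phi$-action are essential. Should one prefer to avoid these structural inputs, an alternative self-contained route is to write both $\eta(\tau)/\eta(7\tau)$ and $h(\tau)$ as products of generalized Dedekind eta functions $\prod_{n\equiv \pm g\,(7)}(1-q^n)$ and verify the resulting product identity directly, at the cost of a longer $q$-series computation.
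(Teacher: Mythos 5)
Your argument is correct, and its endgame is the same as the paper's: both form the trace $z = h + \phi(h) + \phi^2(h) = \frac{h^3-3h+1}{h(h-1)}$, observe that it is a $\Gamma_0(7)$-invariant function whose only singularity is a simple pole at the cusp $\infty$ (the paper checks this by evaluating $z$ at the cusp $0$, where $h$ takes the values $r_i$ with $r_i^3-8r_i^2+5r_i+1=0$, so $z(0)=8$; you check it by noting that the fibre of $X_1(7)\to X_0(7)$ on which $h\in\{0,1,\infty\}$ lies over $\infty$ --- the two observations are equivalent), and then compare $q$-expansions against Schertz's Hauptmodul $\left(\eta(\tau)/\eta(7\tau)\right)^4 = q^{-1}-4+2q+\cdots$ to conclude $z = L+8$, which is (\ref{eqn:13}); the passage to (\ref{eqn:14}) via $\eta(-1/\tau)=\sqrt{\tau/i}\,\eta(\tau)$ is likewise identical. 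The genuine divergence is upstream: the paper does not take the Galois action as given, but devotes most of the Appendix (Theorem \ref{thm:11}) to proving the transformation formulas $h\bigl(\frac{2\tau-1}{7\tau-3}\bigr)=\frac{h-1}{h}$ and $h\bigl(\frac{3\tau-1}{7\tau-2}\bigr)=\frac{1}{1-h}$ directly from Duke's theta-function transformation laws, so that the identity is derived self-containedly from the infinite product defining $h$. You instead import ``$\textrm{Gal}(\textsf{K}_{\Gamma_1(7)}/\textsf{K}_{\Gamma_0(7)})=\langle\phi\rangle$ with $\phi(h)=1/(1-h)$'' as a known fact. That fact is true and citable (it appears in Elkies, and is implicit in Duke), so your proof stands; but be aware that it is precisely the content of Theorem \ref{thm:11}, i.e., the hard part of the paper's argument. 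The soft index computation $[\Gamma_0(7):\pm\Gamma_1(7)]=3$ only shows the extension is cyclic cubic; identifying the generator as the \emph{specific} map $h\mapsto 1/(1-h)$, rather than some other order-$3$ fractional linear transformation, is the nontrivial input on which your formula for $z$ depends. If you want your write-up to match the paper's level of self-containedness, you should either reproduce an argument along the lines of Theorem \ref{thm:11}, or the Remark following it (which pins down $\phi$ from the cusp values of $h$ and the product expansion of $h(A\tau)$), or else give a precise reference for the explicit $\phi$-action. What your route buys in exchange is a much shorter proof that avoids the theta-multiplier bookkeeping entirely.
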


It is clear that (\ref{eqn:14}) follows from (\ref{eqn:13}), using the transformation formula
$$\eta\left(\frac{-1}{\tau}\right) = \sqrt{\frac{\tau}{i}} \eta(\tau).$$

\begin{thm} The function $h(\tau)$ above satisfies the transformation formulas:
\begin{align}
\label{eqn:15} h\left(\frac{3\tau - 1}{7\tau - 2}\right)=& \ \frac{1}{1-h(\tau)},\\
\label{eqn:16} h\left(\frac{2\tau-1}{7\tau-3}\right)= & \ \frac{h(\tau)-1}{h(\tau)}.
\end{align}
\label{thm:11}
\end{thm}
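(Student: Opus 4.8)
The plan is to reduce both identities to the single substitution $\gamma_1=\bigl(\begin{smallmatrix}3&-1\\7&-2\end{smallmatrix}\bigr)$ and then to determine the map it induces on the Hauptmodul $h$ entirely from the cusp data of $X_1(7)$. First I would note that the matrix in (16) is $\gamma_2=\bigl(\begin{smallmatrix}2&-1\\7&-3\end{smallmatrix}\bigr)=\gamma_1^2$, so that once (15) is known, (16) follows by iterating: $h(\gamma_1^2\tau)=\frac{1}{1-h(\gamma_1\tau)}=\frac{1}{1-\frac{1}{1-h}}=\frac{h-1}{h}$. Thus it suffices to prove (15). Since $\gamma_1\in\Gamma_0(7)$ and $\Gamma_1(7)$ is normal in $\Gamma_0(7)$, the function $\tau\mapsto h(\gamma_1\tau)$ is again $\Gamma_1(7)$-invariant, hence lies in $\textsf{K}_{\Gamma_1(7)}=k(h)$ and equals a rational function $M(h)$ of the Hauptmodul; as a field automorphism of $k(h)/k$ it is a Möbius transformation. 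Because $\gamma_1^3=-I$ acts trivially while $\gamma_1\notin\pm\Gamma_1(7)$ (its diagonal entries $3,-2$ are $\not\equiv\pm1$ mod $7$), $M$ has exact order $3$. Note that this argument does not use the cubic relation of Section 4, which is equivalent to the identity (13) we are ultimately proving.

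Next I would pin $M$ down from the orbit of the cusp $\infty$. One computes $\gamma_1\infty=3/7$, $\gamma_1(3/7)=2/7$, $\gamma_1(2/7)=\infty$, so $M$ sends $h(\infty)\mapsto h(3/7)\mapsto h(2/7)\mapsto h(\infty)$ cyclically. To evaluate these I would express $h$ as a quotient of Siegel functions (generalized Dedekind eta functions) $g_a$: matching the given infinite product shows $h(\tau)=g_{(3/7,0)}(7\tau)\,g_{(2/7,0)}(7\tau)^2\,g_{(1/7,0)}(7\tau)^{-3}$, the check on the leading exponent being $\tfrac{7}{2}\bigl(B_2(\tfrac37)+2B_2(\tfrac27)-3B_2(\tfrac17)\bigr)=-1$, where $B_2(x)=x^2-x+\tfrac16$. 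Writing $7\gamma_i\tau=\gamma_i'(7\tau)$ with $\gamma_i'\in SL_2(\mathbb{Z})$ and applying the standard law $g_a(\gamma'\sigma)=\varepsilon(\gamma')\,g_{a\gamma'}(\sigma)$, the root of unity $\varepsilon(\gamma')$ is independent of $a$ and cancels, since numerator and denominator each carry three factors. The index vectors $a\gamma_i'$ are then reduced to the fundamental range, and the three indices are permuted by a $3$-cycle. The resulting leading $q$-exponents, which depend only on the reduced $B_2$-values and involve no root of unity, come out to $\mathrm{ord}_\infty h(\gamma_1\tau)=\tfrac72\cdot\tfrac27=1$ and $\mathrm{ord}_\infty h(\gamma_2\tau)=0$. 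Hence $h(\infty)=\infty$, $h(3/7)=0$, and $h(2/7)=c$ is finite and nonzero.

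An order-$3$ Möbius transformation with $M(\infty)=0$, $M(0)=c$, $M(c)=\infty$ is forced to be $M(h)=\dfrac{c^2}{c-h}$ (and one checks this is automatically of order $3$ for every $c$). This agrees with $\dfrac{1}{1-h}$ exactly when $c=1$, so the whole proof comes down to the single quantitative assertion $c=h(2/7)=1$. I expect this to be the main obstacle: unlike the orders of vanishing, the leading coefficient of $h(\gamma_2\tau)=\zeta\,g_{(1/7,0)}(7\tau)\,g_{(3/7,0)}(7\tau)^2\,g_{(2/7,0)}(7\tau)^{-3}$ at $\infty$ depends on the net root of unity $\zeta$ that survives after the quasi-periodicity reductions $g_{a+v}=(\text{root of unity})\,g_a$ of the shifted indices, and this bookkeeping must be carried out explicitly to see that $\zeta$ and the accompanying constant combine to $1$. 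Equivalently, one may expand $h(\gamma_1\tau)$ to second order directly from the product, obtaining $h(\gamma_1\tau)=-q+2q^2+\cdots$, and match it against $M(h)=\frac{c^2}{c-h}$ with $h=q^{-1}+3+\cdots$ to read off $c=1$; this expansion again uses the same root of unity. Either way $M(h)=\frac{1}{1-h}$, which is (15), and (16) follows as above.
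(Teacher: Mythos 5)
Your reduction of (16) to (15) via $\gamma_2=\gamma_1^2$, the identification of $\tau\mapsto h(\gamma_1\tau)$ with an order-$3$ M\"obius transformation $M$ of the Hauptmodul, and the cusp/order computations pinning $M$ down to the one-parameter family $M(h)=\frac{c^2}{c-h}$ are all correct, and this is a genuinely different (more structural) route than the paper's actual proof, which is a direct computation: the paper writes $h=s t^2$ as an explicit quotient of theta constants $\theta{k/7 \atopwithdelims [] 1}(7\tau)$ and transforms each of the three theta factors under $\gamma_2$ using Duke's formulas, tracking every eighth root of unity and $e(\cdot)$ factor until they combine to give the clean product formula (\ref{eqn:19}), which is then matched against the product for $(h-1)/h$ derived from $s^7=h(h-1)^2$. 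Your skeleton is instead essentially the paper's own Remark following the proof.

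The genuine gap is the step you yourself flag as ``the main obstacle'': the determination $c=h(2/7)=1$. As written, your argument establishes only that $h(\gamma_1\tau)=\frac{c^2}{c-h}$ for \emph{some} nonzero constant $c$, and every bit of the analytic content of Theorem \ref{thm:11} is concentrated in evaluating that constant; the $B_2$-values and orders of vanishing you compute are insensitive to the multiplier system, whereas $c$ is exactly the net root of unity (times the constant term of the reduced product) that survives the quasi-periodicity reductions. Deferring this to ``bookkeeping to be carried out'' leaves the theorem unproved, since that bookkeeping \emph{is} the paper's proof. To close the gap without redoing the theta computation you would need an independent argument forcing $c=1$: for instance, $M$ permutes the six cusp values $\{0,1,\infty,r_1,r_2,r_3\}$ of $h$ (the $r_i$ being the roots of $x^3-8x^2+5x+1$), so $c\in\{1,r_1,r_2,r_3\}$, and one must still exclude $c=r_i$ --- the paper's Remark does this by a short arithmetic argument (a putative map sending some $r_i$ to $0$ would force $b^2+b+1-r_i=0$ with $b\in\mathbb{Q}(\zeta_7)$, which is impossible); alternatively one could argue that the leading coefficient produced by the Siegel-function transformation law is necessarily a root of unity while the real units $r_i$ are not. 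Either supplement would complete your proof; without one of them the argument is not yet a proof.
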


Note that the map $A(\tau)=\frac{2\tau-1}{7\tau-3}$ has order $3$ and $A^2(\tau)= \frac{3\tau - 1}{7\tau - 2}$.  We first prove (\ref{eqn:16}). We use the notation $e(x) = exp(2 \pi i x)$.

\begin{proof} Start with the formulas from \cite[p. 157]{du}:
\begin{align}
\label{eqn:17} s(\tau) = & \ q^{-3/7} \prod_{n \ge 1}{\frac{(1-q^{7n-3})(1-q^{7n-4})}{(1-q^{7n-1})(1-q^{7n-6})}} = e\left(\frac{1}{7}\right) \frac{\theta {1/7 \atopwithdelims [] 1}(7\tau)}{\theta {5/7 \atopwithdelims [] 1}(7\tau)},\\
\label{eqn:18} t(\tau) = & \ q^{-2/7} \prod_{n \ge 1}{\frac{(1-q^{7n-2})(1-q^{7n-5})}{(1-q^{7n-1})(1-q^{7n-6})}} = e\left(\frac{1}{14}\right) \frac{\theta {3/7 \atopwithdelims [] 1}(7\tau)}{\theta {5/7 \atopwithdelims [] 1}(7\tau)}.
\end{align}
From the infinite products in (\ref{eqn:17}) and (\ref{eqn:18}) we get 
\begin{align*}
s(\tau) t^2(\tau) = & \ q^{-1} \prod_{n \ge 1}{\frac{(1-q^{7n-3})(1-q^{7n-4})(1-q^{7n-2})^2(1-q^{7n-5})^2}{(1-q^{7n-1})^3(1-q^{7n-6})^3}}\\
= & \ h(\tau).
\end{align*}
Hence, the theta function representations in (\ref{eqn:17}) and (\ref{eqn:18}) yield
$$h(\tau) = e\left(\frac{2}{7}\right) \frac{\theta {1/7 \atopwithdelims [] 1}(7\tau) \theta^2 {3/7 \atopwithdelims [] 1}(7\tau)}{\theta^3 {5/7 \atopwithdelims [] 1}(7\tau)}.$$
To prove the formula for $h\left(\frac{2\tau-1}{7\tau-3}\right) = h(A(\tau))$ we compute the transforms
$$\theta {k/7 \atopwithdelims [] 1}(7A(\tau)), \ \textrm{for} \ k = 1, 3, 5.$$
First, we have from \cite[p. 143, (4.5)]{du}, using the mapping $B(\tau) = \frac{2\tau-7}{\tau-3}$, that
\begin{align*}
\theta {1/7 \atopwithdelims [] 1}(7A(\tau)) & = \theta {1/7 \atopwithdelims [] 1}\left(\frac{2(7\tau)-7}{(7\tau)-3}\right)\\
& = \kappa_1 \sqrt{7\tau-3} \cdot \theta {-5/7 \atopwithdelims [] 17} (7\tau);
\end{align*}
where
$$\kappa_1 = e\left(\frac{-27}{4}+\frac{37}{56}\right) \kappa_0 = e\left(\frac{-5}{56}\right) \kappa_0,$$
and $\kappa_0$ is a fixed $8$-th root of unity depending only on the mapping $B$.
Now use \cite[(4.3)]{du} with $\ell = 0$ and $m=9$ and the lower sign, according to which
$$\theta {-5/7 \atopwithdelims [] 17} (7\tau)=e\left(\frac{-5 \cdot 9}{14}\right) \theta {5/7 \atopwithdelims [] 1} (7\tau) = e\left(\frac{-3}{14}\right)\theta {5/7 \atopwithdelims [] 1} (7\tau).$$
This gives that
$$\theta {1/7 \atopwithdelims [] 1}(7A(\tau)) =\kappa_0 e\left(\frac{-17}{56}\right) \sqrt{7\tau-3} \cdot \theta {5/7 \atopwithdelims [] 1} (7\tau).$$
Next, we have 
\begin{align*}
\theta {3/7 \atopwithdelims [] 1}(7A(\tau)) & = \theta {3/7 \atopwithdelims [] 1}\left(\frac{2(7\tau)-7}{(7\tau)-3}\right)\\
& = \kappa_3 \sqrt{7\tau-3} \cdot \theta {-1/7 \atopwithdelims [] 15} (7\tau);
\end{align*}
where
$$\kappa_3 = e\left(\frac{-39}{4}+\frac{81}{56}\right) \kappa_0 = e\left(\frac{39}{56}\right) \kappa_0.$$
Using \cite[(4.3)]{du} with $\ell = 0$ and $m=8$ and the lower sign gives 
$$\theta {-1/7 \atopwithdelims [] 15} (7\tau)=e\left(\frac{-4}{7}\right) \theta {1/7 \atopwithdelims [] 1} (7\tau);$$
from which we obtain
$$\theta {3/7 \atopwithdelims [] 1}(7A(\tau)) =\kappa_0 e\left(\frac{7}{56}\right) \sqrt{7\tau-3} \cdot \theta {1/7 \atopwithdelims [] 1} (7\tau).$$
Finally, for the third theta function we have
\begin{align*}
\theta {5/7 \atopwithdelims [] 1}(7A(\tau)) & = \theta {5/7 \atopwithdelims [] 1}\left(\frac{2(7\tau)-7}{(7\tau)-3}\right)\\
& = \kappa_5 \sqrt{7\tau-3} \cdot \theta {3/7 \atopwithdelims [] 13} (7\tau);
\end{align*}
where
$$\kappa_5 = e\left(\frac{-51}{4}+\frac{141}{56}\right) \kappa_0 = e\left(\frac{-13}{56}\right) \kappa_0.$$
Now using \cite[(4.3)]{du} with $\ell=0, m=-6$ and the upper sign gives
$$\theta {3/7 \atopwithdelims [] 13} (7\tau)=e\left(\frac{2}{7}\right) \theta {3/7 \atopwithdelims [] 1} (7\tau);$$
from which we obtain
$$\theta {5/7 \atopwithdelims [] 1}(7A(\tau)) =\kappa_0 e\left(\frac{3}{56}\right) \sqrt{7\tau-3} \cdot \theta {3/7 \atopwithdelims [] 1} (7\tau).$$
Putting this all together gives
\begin{align*}
h(A(\tau)) &= e\left(\frac{2}{7}\right) \frac{\kappa_0 e\left(\frac{-17}{56}\right) \sqrt{7\tau-3} \cdot \theta {5/7 \atopwithdelims [] 1} (7\tau) \cdot \kappa_0^2 e\left(\frac{14}{56}\right) (7\tau-3) \cdot \theta^2 {1/7 \atopwithdelims [] 1} (7\tau)}{\kappa_0^3 e\left(\frac{9}{56}\right) (7\tau-3)^{3/2} \cdot \theta^3 {3/7 \atopwithdelims [] 1} (7\tau)}\\
& = e\left(\frac{1}{14}\right) \frac{\theta {5/7 \atopwithdelims [] 1} (7\tau)\cdot \theta^2 {1/7 \atopwithdelims [] 1} (7\tau)}{\theta^3 {3/7 \atopwithdelims [] 1} (7\tau)}.
\end{align*}
Now using the product formula \cite[(4.8)]{du} for the theta functions $\theta {k/7 \atopwithdelims [] 1}(7\tau)$ yields that
\begin{equation}
h\left(\frac{2\tau-1}{7\tau-3}\right) = \prod_{n \ge 1}{\frac{(1-q^{7n-1})(1-q^{7n-6})(1-q^{7n-3})^2(1-q^{7n-4})^2}{(1-q^{7n-2})^3(1-q^{7n-5})^3}}.
\label{eqn:19}
\end{equation}

On the other hand, from the relation $s^7(\tau) = h(\tau) (h(\tau)-1)^2$ we easily derive the product formula
$$h(\tau)-1 = q^{-1} \prod_{n \ge 1}{\frac{(1-q^{7n-3})^3(1-q^{7n-4})^3}{(1-q^{7n-1})^2(1-q^{7n-6})^2(1-q^{7n-2})(1-q^{7n-5})}}.$$
Dividing by the product formula for $h(\tau)$ yields that
$$\frac{h(\tau)-1}{h(\tau)} = \prod_{n \ge 1}{\frac{(1-q^{7n-1})(1-q^{7n-6})(1-q^{7n-3})^2(1-q^{7n-4})^2}{(1-q^{7n-2})^3(1-q^{7n-5})^3}}$$
and proves the formula.  From (\ref{eqn:16}) it follows that
$$h\left(\frac{3\tau-1}{7\tau-2}\right) = h(A^2(\tau)) = \frac{h(A(\tau))-1}{h(A(\tau))} = \frac{1}{1-h(\tau)},$$
which is formula (\ref{eqn:15}).  This proves Theorem \ref{thm:11}.
\end{proof}

\noindent {\bf Remark.} Alternatively, we could finish the above proof by noting that $h\left(\frac{2\tau-1}{7\tau-3}\right)$ is a modular function for $\Gamma_1(7)$, since $T \in \Gamma_1(7)$ satisfies
$$h(A T(\tau)) = h(SA(\tau)) = h(A(\tau)), \ \ \textrm{for some} \  S \in \Gamma_1(7).$$
Since $h(\tau)$ is a Hauptmodul for $\Gamma_1(7)$ and $h(\tau) \rightarrow h(A(\tau))$ induces an automorphism of $\textsf{K}_{\Gamma_1(7)}/\textsf{K}_{\Gamma_0(7)}$, it follows that $h(A(\tau))$ is also a Hauptmodul and therefore equal to a linear fractional expression in $h(\tau)$.  Furthermore, the six values of $h(\tau)$ at the cusps of $\Gamma_1(7)$ (namely, $\infty, 1 ,0$ and the roots $r_i$ of $x^3-8x^2+5x+1$, by (\ref{eqn:5})), are permuted by this automorphism (as residues of $h$ modulo the prime divisors of $\textsf{K}_{\Gamma_1(7)}$ at infinity).  Since the value $\infty$ corresponding to $\tau = \infty i$ is mapped to $h(A(\tau)) = 1$, by the product formula (\ref{eqn:19}), it follows that the value $1$ must be mapped to $0$.  This holds because $h(A(\tau)) = \frac{h+a}{h+b}$ cannot map a root $r_i$ to $0$: otherwise $a = -r_i$, and applying the map twice sends $1$ to $\infty$ ($A$ has order $3$), which would yield that $b$ satisfies the irreducible equation $b^2+b+1-r_i = 0$ over $\mathbb{Q}(\zeta_7)$ (the norm of its discriminant is $-43$).  But then $\frac{1-r_i}{1+b} = r_j$, for some $j$, would be impossible and $\frac{h+a}{h+b}$ could not map $1$ to one of the $r_j$.  Hence, $a = -1, b = 0$ and $h(A(\tau)) = \frac{h-1}{h}$.  From the resulting product formula for $(h(\tau)-1)/h(\tau)$ we can derive the equation $s^7(\tau) = h(\tau) (h(\tau)-1)^2$. \medskip

Now we use the fact that $[\Gamma_0(7): \Gamma_1(7) \cup (-I)\Gamma_1(7)]=3$, from which it follows that $1, A, A^2$ are representatives for the cosets of $\Gamma_1[7] = \Gamma_1(7) \cup (-I)\Gamma_1(7)$ in $\Gamma_0(7)$.  It follows that the function
\begin{align}
\notag z(\tau) &= h(\tau)+h(A(\tau))+h(A^2(\tau))\\
& = h(\tau)+\frac{h(\tau)-1}{h(\tau)}+\frac{1}{1-h(\tau)} = \frac{h^3(\tau)-3h(\tau)+1}{h(\tau)(h(\tau)-1)}
\label{eqn:20}
\end{align}
is a modular function for $\Gamma_0(7)$ with a simple pole at $\infty i$ and the value $z(\tau) = z(0) = 8$ at the other cusp of $\Gamma_0(7)$ (since the values of $h(\tau)$ at the cusps of $\Gamma_1(7)$ lying above $0$ are the roots $r_i$ of $x^3-8x^2+5x+1$).  It is clear that $\left(\frac{\eta(\tau)}{\eta(7\tau)}\right)^4$ is a Hauptmodul for $\Gamma_0(7)$  \cite[pp. 46, 51]{sch}, and comparing $q$-expansions gives that
$$z(\tau) = \left(\frac{\eta(\tau)}{\eta(7\tau)}\right)^4+8.$$
This shows that
\begin{align*}
\left(\frac{\eta(\tau)}{\eta(7\tau)}\right)^4 &= z(\tau)-8 =  \frac{h^3(\tau)-3h(\tau)+1}{h(\tau)(h(\tau)-1)}-8\\ 
& = \frac{h^3(\tau)-8h^2(\tau)+5h(\tau)+1}{h(\tau)(h(\tau)-1)},
\end{align*}
which is (\ref{eqn:13}).  See \cite[(4.24), p. 89]{elk}. \medskip

These identities are closely related to several of Ramanujan's entries in the unorganized material of his Notebooks.  See entries 31 and 32 in \cite[pp. 174-184]{ber}.  In particular, the product representation of $h(\tau)-1$ in the above proof is equivalent to Entry 32(ii) of \cite[p. 176]{ber} (or \cite[(1.2)]{bz}); and the relation $f_7(h(\tau),j_7^*(\tau)) = 0$ in Section 4 is, assuming Theorem 10, equivalent to Entry 32(iii) of \cite[p. 176]{ber} (or \cite[(1.3)]{bz}).  Also see \cite[Thm. 7.14, p. 440]{cp} and \cite{mor5}.

\noindent Dept. of Mathematical Sciences, LD 270

\noindent Indiana University -- Purdue University at Indianapolis (IUPUI)

\noindent 402 N. Blackford St., Indianapolis, IN 46202

\noindent e-mail: pmorton@iupui.edu

\end{document}